\documentclass{amsart}
\usepackage[all]{xy}
\usepackage{color}
\usepackage{stmaryrd}
\usepackage{amsthm}
\usepackage{amssymb}
\usepackage[colorlinks=true]{hyperref}


\numberwithin{equation}{subsection}

\newtheorem{theorem}{Theorem}[section]
\newtheorem*{theorem*}{Theorem}
\newtheorem{lemma}[theorem]{Lemma}
\newtheorem{sublemma}[theorem]{Sublemma}
\newtheorem{proposition}[theorem]{Proposition}
\newtheorem{corollary}[theorem]{Corollary}
\newtheorem*{corollary*}{Corollary}

\setcounter{tocdepth}{1}

\theoremstyle{remark}
\newtheorem{definition}[theorem]{Definition}
\theoremstyle{remark}
\newtheorem{example}[theorem]{Example}

\theoremstyle{remark}
\newtheorem{remark}[theorem]{Remark}
\theoremstyle{remark}
\newtheorem{notation}[theorem]{Notation}
\DeclareMathOperator{\Mot}{Mot}

\DeclareMathOperator{\incl}{incl}
\DeclareMathOperator{\id}{id}

\DeclareMathOperator{\NNum}{NNum} 
\DeclareMathOperator{\AM}{AM} 
\DeclareMathOperator{\MAM}{MAM} 
\DeclareMathOperator{\NAM}{NAM} 
\DeclareMathOperator{\NMAM}{NMAM} 
\DeclareMathOperator{\NMix}{NMix} 
\DeclareMathOperator{\DM}{DM} 

\newcommand{\Ho}{\mathrm{Ho}}
\newcommand{\ko}{\: , \;}
\newcommand{\too}{\longrightarrow}
\newcommand{\dg}{\mathrm{dg}}
\newcommand{\dgHo}{\mathrm{H}}


\newcommand{\cA}{{\mathcal A}}
\newcommand{\cB}{{\mathcal B}}
\newcommand{\cC}{{\mathcal C}}
\newcommand{\cD}{{\mathcal D}}
\newcommand{\cE}{{\mathcal E}}
\newcommand{\cF}{{\mathcal F}}
\newcommand{\cG}{{\mathcal G}}
\newcommand{\cH}{{\mathcal H}}

\newcommand{\cS}{{\mathcal S}}

\newcommand{\bbA}{\mathbb{A}}

\newcommand{\bbC}{\mathbb{C}}

\newcommand{\bbF}{\mathbb{F}}

\newcommand{\bbQ}{\mathbb{Q}}
\newcommand{\bbP}{\mathbb{P}}

\newcommand{\bbZ}{\mathbb{Z}}

\newcommand{\op}{\mathrm{op}} 
\newcommand{\ie}{\textsl{i.e.}\ }

\newcommand{\Hmo}{\mathrm{Hmo}}



\newcommand{\perf}{\mathrm{perf}} 


\newcommand{\add}{\mathrm{add}}
\newcommand{\Hom}{\mathrm{Hom}} 
\newcommand{\rep}{\mathrm{rep}} 
\newcommand{\Fun}{\mathrm{Fun}} 
\newcommand{\dgcat}{\mathrm{dgcat}}




\newcommand{\Spt}{\mathrm{Spt}}

\newcommand{\uHom}{\underline{\mathrm{Hom}}}


\newcommand{\dgS}{\cS}
\newcommand{\dgD}{\cD}

\newcommand{\internalcomment}[1]{}

\title[NC mixed (Artin) motives and their motivic Hopf dg algebras]{Noncommutative mixed (artin) motives \\and their motivic Hopf dg algebras}
\author{Gon{\c c}alo~Tabuada}

\address{Gon{\c c}alo Tabuada, Department of Mathematics, MIT, Cambridge, MA 02139, USA}
\email{tabuada@math.mit.edu}
\urladdr{http://math.mit.edu/~tabuada}

\subjclass[2000]{14A22, 14C15, 16E40, 16T05, 19D55}
\date{\today}

\keywords{Hopf dg algebra, weak Tannakian formalism, Hochschild homology, algebraic $K$-theory, mixed Artin-Tate motives, orbit category, noncommutative algebraic geometry.}

\thanks{The author was partially supported by a NSF CAREER Award.}

\begin{document}
\begin{abstract}
This article is the sequel to \cite{Artin}. We start by developing a theory of noncommutative (=NC) mixed motives with coefficients in any commutative ring. In particular, we construct a symmetric monoidal triangulated category of NC mixed motives, over a base field $k$, and a full subcategory of NC mixed Artin motives. Making use of Hochschild homology, we then apply Ayoub's weak Tannakian formalism to these motivic categories. In the case of NC mixed motives, we obtain a {\em motivic Hopf dg algebra}, which we describe explicitly in terms of Hochschild homology and complexes of exact cubes. In the case of NC mixed Artin motives, we compute the associated Hopf dg algebra using solely the classical category of mixed Artin-Tate motives. Finally, we establish a short exact sequence relating the Hopf algebra of continuous functions on the absolute Galois group with the motivic Hopf dg algebras of the base field $k$ and of its algebraic closure. Along the way, we describe the behavior of Ayoub's weak Tannakian formalism with respect to orbit categories and relate the category of NC mixed motives with Voevodsky's category of mixed motives.
\end{abstract}
\maketitle

\vskip-\baselineskip
\vskip-\baselineskip
\section{Introduction}
\subsection*{Dg categories}
A {\em dg category $\cA$}, over a base field $k$, is a category enriched over complexes of $k$-vector spaces; see \S\ref{sec:dg}. Every (dg) $k$-algebra $A$ gives naturally rise to a dg category with a single object. Another source of examples is provided by schemes since the category of perfect complexes $\perf(X)$ of every quasi-compact quasi-separated $k$-scheme $X$ admits a canonical dg enhancement $\perf_\dg(X)$. Following Kontsevich \cite{ENS}, a dg category $\cA$ is called {\em smooth} if it is compact as a bimodule over itself and {\em proper} if $\sum_i \mathrm{dim}\, \dgHo^i \cA(x,y) < \infty$ for any pair $(x,y)$ of objects. Examples include the finite dimensional $k$-algebras of finite global dimension (when $k$ is perfect) and the dg categories $\perf_\dg(X)$ associated to smooth projective $k$-schemes $X$. In what follows, $\dgcat(k)$ denotes the category of (small) dg categories and $\dgcat_{\mathrm{sp}}(k)$ the full subcategory of smooth proper dg categories.
\subsection*{Noncommutative Artin motives}
Let $R$ be a commutative ring of coefficients. Recall from \cite{Semi,Kontsevich,Galois} the construction of the category of noncommutative numerical motives $\NNum(k)_R$ and of the symmetric monoidal functor $\mathrm{U}_R: \dgcat_{\mathrm{sp}}(k) \to \NNum(k)_R$. When $R$ is a $\bbQ$-algebra, there exists an $R$-linear symmetric monoidal functor $\psi$ making the following diagram commute
\begin{equation}\label{eq:diagram-pure}
\xymatrix{
\mathrm{SmProj}(k)^\op \ar[d]_-{\mathfrak{h}_R} \ar[rrr]^-{X \mapsto \perf_\dg(X)} &&& \dgcat_{\mathrm{sp}}(k) \ar[d]^-{\mathrm{U}_R} \\
\mathrm{Num}(k)_R \ar[rrr]_-{\psi} &&& \NNum(k)_R\,,
}
\end{equation}
where $\mathrm{SmProj}(k)$ stands for the category of smooth projective $k$-schemes and $\mathrm{Num}(k)_R$ for the category of numerical motives. The category of {\em noncommutative Artin motives $\NAM(k)_R$} was introduced in \cite{Artin} as the smallest additive idempotent complete full subcategory of $\NNum(k)_R$ containing the objects $\mathrm{U}_R(l)$, where $l/k$ is a finite separable field extension. As proved in {\em loc. cit.}, the induced functor $\psi: \mathrm{AM}(k)_R \to \NAM(k)_R$, from the classical category of Artin motives to the category of noncommutative Artin motives, is an equivalence of categories.
\subsection*{Motivic Galois groups}
As proved in \cite{Semi}, the category $\NNum(k)_R$ is abelian semi-simple. Assuming the noncommutative standard conjecture $C_{NC}$ (= K{\"u}nneth) and that $k$ is of characteristic zero, $\NNum(k)_R$ can be made into a Tannakian category $\NNum^\dagger(k)_R$ by modification of the symmetry isomorphism constraints. Moreover, assuming the noncommutative standard conjecture $D_{NC}$ (= homological equals numerical) and that $R$ is a $k$-algebra, $\NNum^\dagger(k)_R$ becomes a neutral Tannakian category. Consequently, we obtain a {\em motivic Galois group} $\mathrm{Gal}(\NNum^\dagger(k)_R)$; see \cite{Galois}. As proved in \cite{Artin}, $\NAM^\dagger(k)_R=\NAM(k)_R$ and when $k\subset \bbC$ and $R$ is a $\bbC$-algebra, the associated affine group scheme $\mathrm{Gal}(\NAM(k)_R)$ identifies with the absolute Galois group $\mathrm{Gal}(\overline{k}/k)$. Moreover, we have the  short exact sequence
\begin{eqnarray}\label{eq:short-exact}
\quad\quad\quad  1 \too \mathrm{Gal}(\NNum^\dagger(\overline{k})_R) \too \mathrm{Gal}(\NNum^\dagger(k)_R) \too \mathrm{Gal}(\overline{k}/k)\too 1\,,
\end{eqnarray}
where the first map is induced by the inclusion $\NAM(k)_R \subset \NNum^\dagger(k)_R$ and the second one by the base-change functor $-\otimes_k \overline{k}: \NNum^\dagger(k)_R \to \NNum^\dagger(\overline{k})_R$.

\smallskip

The purpose of this article is to develop the mixed analogues of the aforementioned constructions and results. This will require new arguments and new mathematical tools which are of independent interest. Since the mixed world is much richer than the pure work, interesting new phenomena will occur; consult for instance Proposition \ref{prop:comparison} and Theorem \ref{thm:main1} below.
\section{Statement of results}\label{sec:statements}
\subsection*{Noncommutative mixed Artin motives}
Recall from \S\ref{sec:NCMixed} the construction of the symmetric monoidal triangulated categories of {\em noncommutative mixed motives} $\NMix(k;R)$ and $\NMix(k;R)^\oplus$, and of the symmetric monoidal functor 
$$U_R: \dgcat_{\mathrm{sp}}(k)\too \NMix(k;R) \subset \NMix(k;R)^\oplus\,.$$
Let $\mathrm{DM}(k;R)$ be Voevodsky's category of motives and $M_R: \mathrm{SmProj}(k) \to \mathrm{DM}(k;R)$ the associated symmetric monoidal functor. The above bridge \eqref{eq:diagram-pure}, relating the commutative with the  noncommutative world, admits the following mixed analogue:
\begin{theorem}\label{thm:main0}
When $R$ is a $\bbQ$-algebra, there exists an $R$-linear triangulated symmetric monoidal functor $\Psi$ making the following diagram commute
\begin{equation}\label{eq:diagram-commutative}
\xymatrix{
\mathrm{SmProj}(k)\ar[dd]_-{M_R} \ar[rrr]^-{X \mapsto \perf_\dg(X)} &&& \dgcat_{\mathrm{sp}}(k) \ar[d]^-{U_R} \\
&&& \NMix(k;R)^\oplus \ar[d]^-{(-)^\vee} \\
\DM(k;R) \ar[rrr]_-{\Psi} &&& \NMix(k;R)^\oplus  \,,
}
\end{equation}
where $(-)^\vee$ stands for the duality (contravariant) functor. The functor $\Psi$ preserves moreover arbitrary direct sums.
\end{theorem}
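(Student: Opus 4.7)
My plan is to construct $\Psi$ via the universal property of Voevodsky's category $\DM(k;R)$, by exhibiting the composite $X \mapsto U_R(\perf_\dg(X))^\vee$ as a presheaf with transfers, Nisnevich descent, and $\bbA^1$-invariance on the category $\mathrm{Sm}(k)$ of smooth $k$-schemes, and then appealing to universality to descend it along $M_R$.

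First I would extend the object-level assignment from $\mathrm{SmProj}(k)$ to all smooth $k$-schemes by the same formula $F(X) := U_R(\perf_\dg(X))^\vee$, with morphisms induced by pullback of perfect complexes followed by duality in $\NMix(k;R)^\oplus$. Three properties of $F$ then need to be verified. \emph{Nisnevich (in fact cdh) descent} follows from the Thomason--Trobaugh descent theorem for $\perf_\dg(-)$, combined with the fact, built into the construction of $\NMix(k;R)$ in Section~\ref{sec:NCMixed}, that $U_R$ sends Morita-exact sequences of dg categories to distinguished triangles. \emph{$\bbA^1$-invariance} is equally built into $\NMix(k;R)$, so that $U_R$ inverts the canonical map $\perf_\dg(X) \to \perf_\dg(X \times \bbA^1)$. \emph{Transfers} along finite correspondences come from assigning to a closed subscheme $Z \subset X \times Y$, finite and surjective over $X$, the $\perf_\dg(X)$-$\perf_\dg(Y)$-bimodule it naturally determines, whose image under $U_R$ furnishes the required transfer map.

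Granting these three properties, the universal characterization of $\DM(k;R)$ as the initial $R$-linear symmetric monoidal triangulated recipient of a presheaf with transfers on $\mathrm{Sm}(k)$ satisfying Nisnevich descent and $\bbA^1$-invariance produces the desired symmetric monoidal triangulated functor $\Psi : \DM(k;R) \to \NMix(k;R)^\oplus$ through which $F$ factors; the commutativity of diagram~\eqref{eq:diagram-commutative} is then the restriction to $\mathrm{SmProj}(k)$. Preservation of arbitrary direct sums follows because $\Psi$ sends the compact generators $M_R(X)$ (for $X$ smooth projective) to the compact objects $U_R(\perf_\dg(X))^\vee$ of $\NMix(k;R)^\oplus$, and any triangulated functor between compactly generated triangulated categories sending a set of compact generators to compact objects preserves arbitrary direct sums.

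The main obstacle is the verification of functoriality under transfers: one must check that the intersection-theoretic composition of Voevodsky correspondences is matched by the derived tensor product of the associated bimodules. This is precisely where the hypothesis $R \supseteq \bbQ$ enters, through torsion-killing and decomposition-of-the-diagonal arguments. A possible alternative, bypassing the direct transfer verification, is to use that $\DM_{\mathrm{gm}}(k;R)$ is generated (under triangles and retracts) by motives of smooth projective schemes (via de Jong's alterations combined with $\bbQ$-coefficients), and to define $\Psi$ directly on such generators via the stated formula, afterwards extending to the whole of $\DM(k;R)$ by ind-completion.
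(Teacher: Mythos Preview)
Your overall strategy---descend a functor along $M_R$ by verifying Nisnevich descent, $\bbA^1$-invariance, and transfers---is a reasonable one in principle, but two of the three verifications rest on false premises about the paper's category $\NMix(k;R)$.

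First, $\bbA^1$-invariance is \emph{not} built into $\NMix(k;R)$. The model category $\Mot(k;R)$ of \S\ref{sec:NCMixed} is the \emph{additive} motivic category: the Bousfield localization $L_{\add}$ imposes only Morita invariance and the splitting of \emph{split} short exact sequences. There is a separate $\bbA^1$-homotopy variant $\Mot_{\bbA^1}(k)$ mentioned in \S\ref{rk:variants}, precisely because the additive one does not enforce $U_R(\perf_\dg(X))\simeq U_R(\perf_\dg(X\times\bbA^1))$. One can try to prove the needed invariance a posteriori (using that smooth proper dg categories are regular, so that their $K$-theory is polynomial-invariant), but this is a genuine argument, not something you can read off the construction.

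Second, and for the same reason, your Nisnevich-descent step fails as written. Thomason--Trobaugh supplies localization sequences $\perf_{\dg,Z}(X)\to\perf_\dg(X)\to\perf_\dg(U)$, but these are \emph{non-split} exact sequences of dg categories, and $U_R$ in the additive category is only required to send \emph{split} exact sequences to cofiber sequences. So ``$U_R$ sends Morita-exact sequences to distinguished triangles'' is not what \S\ref{sec:NCMixed} says; that property characterizes the localizing variant $\Mot_L(k)$, not $\Mot(k)$.

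The paper's proof avoids both obstacles by routing through motivic homotopy theory rather than through a universal property of $\DM$. It identifies $\DM(k;R)$ with $\Ho(\mathrm{Mod}(\mathrm{HZ}_R))$ (R{\"o}ndigs--{\O}stv{\ae}r), base-changes along the ring map $\mathrm{HZ}_R\to\mathrm{KGL}_R$ using the rational splitting $\mathrm{KGL}_R\simeq\bigoplus_i\mathrm{HZ}_R(i)[2i]$ (Bloch/Riou), and then invokes an existing fully faithful comparison $\Phi\colon\Ho(\mathrm{Mod}(\mathrm{KGL}_R))\to\NMix(k;R)^\oplus$ from \cite{Tabuada-Voevodsky}. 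In that route, $\bbA^1$-invariance and Nisnevich descent are absorbed into the construction of $\mathrm{KGL}$ in ${\bf DA}(k;R)$, and the transfer compatibility you flag as ``the main obstacle'' never has to be checked by hand.

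A minor further point: your argument for preservation of direct sums is not quite correct as stated. Sending a set of compact generators to compact objects does not by itself force a triangulated functor to preserve coproducts; one needs either that $\Psi$ is a left adjoint (which is what the paper's construction supplies, since both base-change along $\mathrm{HZ}_R\to\mathrm{KGL}_R$ and $\Phi$ are left adjoints), or an explicit coproduct-preservation built into the extension procedure.
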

\begin{remark}\label{rk:last}
As explained in Remark \ref{rk:dualizable} below, we have $U_R(\cA)^\vee \simeq U_R(\cA^\op)$ for every smooth proper dg category $\cA$.. Making use of the Morita equivalence between $\perf_\dg(X)^\op$ and $\perf_\dg(X)$, we hence conclude that $\Psi(M_R(X))\simeq U_R(\perf_\dg(X))$ for every smooth projective $k$-scheme $X$. 
\end{remark}

Theorem \ref{thm:main0} holds more generally with $\mathrm{SmProj}(k)$ replaced by smooth $k$-schemes; see \S\ref{sec:proof-thm0}. Consult also Theorem \ref{thm:orbit} for a factorization of $\Psi$ through an orbit category.
\begin{definition}\label{def:NMAM}
The category of {\em noncommutative mixed Artin motives $\NMAM(k;R)$} is defined as the smallest thick triangulated subcategory of $\NMix(k;R)$ containing the objects $U_R(l)$, where $l/k$ a finite separable field extension. In the same vein, let $\NMAM(k;R)^\oplus$ be the smallest triangulated subcategory of $\NMix(k;R)^\oplus$ which contains $\NMAM(k;R)$ and is closed under arbitrary direct~sums.
\end{definition}
As proved in Proposition \ref{prop:rel-Artin}, when $R$ is a $\bbQ$-algebra, the category $\NAM(k)_R$ can be identified with the smallest additive idempotent complete full subcategory of $\NMAM(k;R)$ containing the objects $U_R(l)$, where $l/k$ is a finite separable extension.

V.~Voevodsky remarked in \cite[page 217]{Voev} that, when $R$ is a $\bbQ$-algebra, the classical triangulated category of mixed Artin motives $\MAM(k;R) \subset \mathrm{DM}(k;R)$ is equivalent to the bounded derived category $\cD^b(\AM(k)_R)$. Since $\mathrm{AM}(k)_R$ is semi-simple, $\MAM(k;R)$ identifies then with the category $\mathrm{Gr}_\bbZ^b(\mathrm{AM}(k)_R)$ of bounded $\bbZ$-graded objects in $\mathrm{AM}(k)_R$. In particular, there are no higher Ext-groups. On the other hand, as explained in Example \ref{ex:schemes}, we have the following isomorphisms
\begin{equation}\label{eq:R-mod-isos}
\Hom_{\NMAM(k;R)}(U_R(\perf_\dg(X)), U_R(\perf_\dg(Y))[-n])\simeq K_n(X \times Y)_R
\end{equation}
for any two finite {\'e}tale $k$-schemes $X$ and $Y$. 
\begin{proposition}\label{prop:comparison}
When $k$ is finite, the functor $\Psi: \MAM(k;R) \to \NMAM(k;R)$ is an equivalence. Consequently, $\NMAM(k;R)$ identifies with $\mathrm{Gr}_\bbZ^b(\mathrm{AM}(k)_R)$. On the other hand, when $k$ is of characteristic zero, we~have~non-trivial~morphisms
$$ \Hom_{\NMAM(k;R)}(U_R(k),U_R(k)[-1]) \simeq K_1(k)_R \simeq k^\times \otimes_\bbZ R \neq 0\,.$$
Consequently, $\NMAM(k;R)$ is {\em not} equivalent to $\MAM(k;R)$. 
\end{proposition}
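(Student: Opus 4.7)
The plan is to reduce both claims to the $K$-theoretic computation of Hom-groups \eqref{eq:R-mod-isos} combined with Voevodsky's identification $\MAM(k;R)\simeq \cD^b(\AM(k)_R)\simeq \mathrm{Gr}_\bbZ^b(\AM(k)_R)$ recalled just before the statement.

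For the finite case $k=\bbF_q$, every finite étale $k$-scheme is a finite coproduct of spectra of finite field extensions, so Quillen's computation shows that $K_n(X\times Y)$ is a finite abelian group for every $n>0$ and all finite étale $X,Y$. Since $R$ is a $\bbQ$-algebra this gives $K_n(X\times Y)_R=0$, and \eqref{eq:R-mod-isos} then yields the vanishing of the higher Hom-groups between the generators $U_R(l)$ in $\NMAM(k;R)$. The identification $\MAM(k;R)\simeq \mathrm{Gr}_\bbZ^b(\AM(k)_R)$ supplies the analogous vanishing on the commutative side, while in degree zero both Hom-groups compute the $R$-module of morphisms in $\AM(k)_R\simeq \NAM(k)_R$ (Proposition \ref{prop:rel-Artin} together with \cite{Artin}). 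Using Theorem \ref{thm:main0} and Remark \ref{rk:last}, which identify $\Psi(M_R(\Spec l))\simeq U_R(l)$, I conclude that $\Psi$ induces isomorphisms on Hom-groups between the generators $M_R(\Spec l)$ and all their shifts. A standard cofibre-sequence/five-lemma dévissage inside the thick triangulated subcategory generated by these objects then upgrades this to full faithfulness of $\Psi\colon \MAM(k;R)\to \NMAM(k;R)$; essential surjectivity is immediate from Definition \ref{def:NMAM}, and the identification $\NMAM(k;R)\simeq \mathrm{Gr}_\bbZ^b(\AM(k)_R)$ follows by transport along $\Psi$.

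For the characteristic zero case, I specialise \eqref{eq:R-mod-isos} to $X=Y=\Spec(k)$ and invoke the classical identification $K_1(k)\simeq k^\times$ to obtain
$$\Hom_{\NMAM(k;R)}(U_R(k),U_R(k)[-1])\simeq k^\times\otimes_\bbZ R.$$
Since $\mathrm{char}(k)=0$, $k\supset\bbQ$, and $2\in\bbQ^\times$ has infinite order, the element $2\otimes 1$ is nonzero in $k^\times\otimes_\bbZ R$. The corresponding Hom-group in $\MAM(k;R)\simeq \mathrm{Gr}_\bbZ^b(\AM(k)_R)$ vanishes, so the two categories cannot be equivalent.

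The main obstacle is the dévissage step in the finite case: promoting an isomorphism of Hom-groups between generators into an equivalence of the thick triangulated subcategories they generate, while checking that $\Psi$ restricted to zeroth Hom-groups really matches the preexisting equivalence $\AM(k)_R\simeq \NAM(k)_R$ coming from \cite{Artin}. Everything else is a direct translation between $K$-theoretic and derived-categorical data.
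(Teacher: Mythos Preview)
Your argument is correct and follows essentially the same route as the paper's proof: both reduce to showing that $\Psi$ induces isomorphisms on Hom-groups between the generators and their shifts, using the $K$-theoretic description \eqref{eq:R-mod-isos} together with the finiteness (hence torsion) of higher $K$-groups of finite \'etale algebras over a finite field, and then conclude via a standard d\'evissage on thick subcategories. The only cosmetic differences are that the paper cites Kuku rather than Quillen for the finiteness of $K_n(l\otimes_k l')$, and handles the degree-zero case by directly observing that $\Psi$ realises the canonical isomorphism $CH^0(X\times Y)_R\simeq K_0(X\times Y)_R$, whereas you route it through Proposition~\ref{prop:rel-Artin} and \cite{Artin}; both verifications amount to the same thing.
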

In contrast with the pure world, Proposition \ref{prop:comparison} shows that when $k$ is of characteristic zero the category of noncommutative mixed Artin motives is much richer than the classical category of mixed Artin motives. Roughly speaking, $\NMAM(k;R)$ contains not only $\MAM(k;R)$ but also all the higher algebraic $K$-theory groups of finite {\'e}tale $k$-schemes. For example, in the case of a number field $\bbF$, we have the following computation (see Borel \cite{Borel})
\begin{eqnarray*}
\Hom_{\NMAM(\bbQ;\bbQ)}(U_\bbQ(\bbQ),U_\bbQ(\bbF)[-n])\simeq \left\{ \begin{array}{ll}
         \bbQ^{r_2}& n\equiv 3\,\,\, (\mathrm{mod}\,4) \\
         \bbQ^{r_1+r_2} & n\equiv 1\,\,\, (\mathrm{mod}\,4) \\
         0 & \mathrm{otherwise}\end{array} \right. && n \geq 2\,,
\end{eqnarray*}
where $r_1$ (resp. $r_2$) stands for the number of real (resp. complex) embeddings of $\bbF$.
\subsection*{Motivic Hopf DG algebras}
The classical Tannakian formalism is quite restrictive since it requires the use of abelian categories. Consequently, it cannot be applied to the triangulated setting of noncommutative mixed motives. Fortunately, Ayoub \cite{Ayoub1,Ayoub2} has recently developed a weak Tannakian formalism for symmetric monoidal categories. Let $\omega:\cC \to \cD$ be a symmetric monoidal functor equipped with a right adjoint $\underline{\omega}$. Ayoub's weak Tannakian formalism asserts that under certain natural conditions on the functors $\omega$ and $\underline{\omega}$, the object $\cH(\cC):= (\omega\circ \underline{\omega})({\bf 1}) \in \cD$ becomes equipped with an Hopf algebra structure. Moreover, the functor $\omega$ admits a lifting  $\omega_{\mathrm{co}}: \cC \to \mathrm{coMod}(\cH(\cC)) $ to the category of $\cH(\cC)$-comodules and the Hopf algebra $\cH(\cC)$ is universal with respect to these properties; consult \cite[\S1]{Ayoub1} for details.
\begin{proposition}\label{prop:main}
When $R$ is a $k$-algebra, Hochschild homology $HH^k$ gives rise to a triangulated symmetric monoidal functor $HH^k_R: \NMix(k;R)^\oplus \to \cD(R)$ which satisfies the conditions of Ayoub's weak Tannakian formalism. Consequently, we obtain a motivic Hopf dg algebra $\cH(\NMix(k;R)^\oplus)\in \cD(R)$.
\end{proposition}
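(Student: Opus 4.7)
The strategy is to construct $HH^k_R$ by descending along the universal property of $\NMix(k;R)^\oplus$, obtain its right adjoint by Brown representability, and then verify Ayoub's conditions on a rigid generating family.

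Classical Hochschild homology is a localizing, symmetric monoidal invariant $HH^k\colon \dgcat(k)\to\cD(k)$; the monoidality is the K\"unneth isomorphism $HH^k(\cA\otimes_k\cB)\simeq HH^k(\cA)\otimes^{\bbL}_k HH^k(\cB)$, which is well known on smooth proper inputs. Extending scalars along the $k$-algebra structure $k\to R$ (the hypothesis ``$R\in\kalg$'' enters precisely here) yields $HH^k_R := HH^k(-)\otimes^{\bbL}_k R \colon \dgcat(k)\to\cD(R)$, still localizing and symmetric monoidal. The universal property of $\NMix(k;R)^\oplus$ recalled in \S\ref{sec:NCMixed} presents it as the free cocomplete triangulated target for $R$-linear symmetric monoidal localizing invariants of smooth proper dg categories commuting with arbitrary direct sums, so $HH^k_R$ descends uniquely to a triangulated symmetric monoidal functor $HH^k_R\colon \NMix(k;R)^\oplus\to\cD(R)$ preserving direct sums.

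Since $\NMix(k;R)^\oplus$ is compactly generated by the set $\{U_R(\cA)\mid \cA\in\dgcat_{\mathrm{sp}}(k)\}$ and $HH^k_R$ is a coproduct-preserving triangulated functor into the well-generated category $\cD(R)$, Brown representability supplies a right adjoint $\underline{HH^k_R}\colon \cD(R)\to\NMix(k;R)^\oplus$. The candidate motivic Hopf dg algebra is then $\cH(\NMix(k;R)^\oplus):=HH^k_R(\underline{HH^k_R}(R))$.

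To conclude, invoke Ayoub's weak Tannakian formalism from \cite[\S1]{Ayoub1,Ayoub2}. Its hypotheses reduce to: (i) $HH^k_R$ is symmetric monoidal, triangulated and colimit-preserving with a right adjoint (established above); and (ii) the source is generated as a localizing subcategory by strongly dualizable objects satisfying the projection formula $\underline{HH^k_R}(M)\otimes X\simeq \underline{HH^k_R}(M\otimes HH^k_R(X))$. Each $U_R(\cA)$ with $\cA$ smooth proper is strongly dualizable in $\NMix(k;R)^\oplus$ with dual $U_R(\cA^\op)$ (cf.\ Remark~\ref{rk:last}), so the projection formula at $X=U_R(\cA)$ is a formal consequence of dualizability and adjointness, and it extends to all of $\NMix(k;R)^\oplus$ by localizing-subcategory closure. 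I expect the main technical obstacle to lie in the descent step: verifying that $HH^k_R$ genuinely inverts every relation built into $\NMix(k;R)^\oplus$ (Nisnevich/Mayer--Vietoris squares, short exact sequences of dg categories, \textit{etc.}). This ultimately rests on the fact, going back to \cite{Semi}, that Hochschild homology is a localizing invariant of dg categories, together with exactness of the base-change $-\otimes^{\bbL}_k R$.
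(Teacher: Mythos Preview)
Your approach follows the paper's closely: construct $HH^k_R$ from the additive/localizing invariance of Hochschild homology via the universal property, obtain the right adjoint by Brown representability (the paper cites \cite[Thm.~8.4.4]{Neeman}), and verify the projection formula on the strongly dualizable generators $U_R(\cA)$. That skeleton is correct.

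There is, however, a genuine omission. Ayoub's hypotheses, as the paper spells them out, are threefold, and you have collapsed them to two. The missing one is condition~(ii): the functor $HH^k_R$ must admit a \emph{symmetric monoidal $2$-section} $S\colon \cD(R)\to \NMix(k;R)^\oplus$, and $S$ must itself admit a right adjoint. This is not subsumed by the existence of $\underline{HH^k_R}$ or by the projection formula; it is what produces the algebra part of the bialgebra structure (and ultimately the antipode). The paper constructs $S$ explicitly as $M\mapsto \big(\cA\mapsto \Sigma^\infty(N(w\rep_\dg(\cA,k))_+)\wedge M_{\mathrm{cof}}\big)$ and then checks that $S$ has a right adjoint by another application of Brown representability. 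Your proposal does not address this at all, and without it you have not verified the hypotheses needed to obtain a \emph{Hopf} dg algebra.

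A second, smaller gap concerns your ``localizing-subcategory closure'' step for the projection formula. To pass from the generators $U_R(\cA)$ to arbitrary $N$, you need the class of $N$ for which $\underline{HH^k_R}(M)\otimes N\to \underline{HH^k_R}(M\otimes HH^k_R(N))$ is invertible to be closed under arbitrary direct sums. The left-hand side is fine, but the right-hand side involves the right adjoint $\underline{HH^k_R}$, so you must know that $\underline{HH^k_R}$ preserves direct sums. The paper isolates this as a lemma: since $HH^k_R$ is symmetric monoidal and the compact generators of the source are strongly dualizable, $HH^k_R$ preserves compact objects, and then \cite[Lem.~2.1.28]{Ayoub-thesis-1} gives that its right adjoint preserves direct sums. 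You should make this explicit rather than folding it into ``closure''.

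Finally, a terminological remark: the relations defining $\Mot(k;R)$ are Morita invariance and \emph{split} short exact sequences (additive invariance), not Nisnevich/Mayer--Vietoris squares. Hochschild homology is indeed a localizing invariant and hence a fortiori additive, so your descent step goes through, but the parenthetical list of ``relations to invert'' is off.
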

The motivic Hopf dg algebra $\cH(\NMix(k;R)^\oplus)$ is the mixed analogue of the motivic Galois group $\mathrm{Gal}(\NNum^\dagger(k)_R)$. Note that while the existence of the latter affine group scheme is conditional to the noncommutative standard conjectures $C_{NC}$ and $D_{NC}$, the former Hopf dg algebra is always defined! Moreover, in contrast with the mysterious structure of the motivic Galois group, the motivic Hopf dg algebra admits the following explicit description: given dg categories $\cA$ and $\cB$, recall from \S\ref{sub:cubes} the construction of the complex of exact cubes $\square_R(\cA,\cB)$.
\begin{proposition}\label{prop:explicit}
The sum-total complex of the following simplicial object 
\begin{eqnarray*}
n & \mapsto & \bigoplus_{\cA_0, \ldots, \cA_n} HH^k_R(\cA_0^\op) \otimes \square_R(\cA_0,A_1) \otimes \cdots \otimes \square_R(\cA_{n-1}, \cA_n) \otimes HH^k_R(\cA_n)\,,
\end{eqnarray*}
where $\cA_0, \ldots, \cA_n \in \dgcat_{\mathrm{sp}}(k)$, is naturally a dg bialgebra $\mathrm{C}$. The multiplication is induced by the tensor product of dg categories and the comultiplication by insertion of the identity elements $HH^k_R(Id_{\cA_i}) \in HH^k_R(\cA_i) \otimes HH^k_R(\cA_i^\op)$. In the derived category $\cD(R)$, $\mathrm{C}$ becomes isomorphic to the dg bialgebra $\cH(\NMix(k;R)^\oplus)$.
\end{proposition}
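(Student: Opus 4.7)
The plan is to realize $\mathrm{C}$ as the explicit two-sided bar model computing $\cH(\NMix(k;R)^\oplus) = (HH^k_R \circ \underline{HH^k_R})(R)$, obtained by resolving $\underline{HH^k_R}(R)$ through the compact dualizable generators $U_R(\cA)$ for $\cA \in \dgcat_{\mathrm{sp}}(k)$. This is the familiar recipe in Ayoub's weak Tannakian formalism whenever the source category is compactly generated by dualizable objects.

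First I would use that $\NMix(k;R)^\oplus$ is compactly generated by the objects $U_R(\cA)$, which by Remark \ref{rk:last} are dualizable with $U_R(\cA)^\vee \simeq U_R(\cA^\op)$. The associated two-sided bar construction produces a simplicial resolution of $\underline{HH^k_R}(R)$ whose $n$th term is
\[
\bigoplus_{\cA_0,\dots,\cA_n} U_R(\cA_0) \otimes \Map(U_R(\cA_0),U_R(\cA_1)) \otimes \cdots \otimes \Map(U_R(\cA_{n-1}),U_R(\cA_n)) \otimes \Map(U_R(\cA_n),\underline{HH^k_R}(R)).
\]
Applying $HH^k_R$ (symmetric monoidal by Proposition \ref{prop:main}) converts the left endpoint $U_R(\cA_0)$, which is paired with its dual in the bar, into $HH^k_R(\cA_0^\op)$ via Remark \ref{rk:last}; the right endpoint simplifies by the $HH^k_R \dashv \underline{HH^k_R}$ adjunction to $HH^k_R(\cA_n)$. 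The last ingredient is the identification $\Map_{\NMix(k;R)^\oplus}(U_R(\cA),U_R(\cB)) \simeq \square_R(\cA,\cB)$, which is built into the construction of $\NMix(k;R)$ in \S\ref{sec:NCMixed}: the morphism complexes are modeled by Waldhausen $K$-theory of exact cubes, whose $\pi_0$-level recovers \eqref{eq:R-mod-isos}. Substituting these identifications produces the simplicial object of the statement, whose sum-total complex computes $\cH(\NMix(k;R)^\oplus)$ in $\cD(R)$.

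The bialgebra structure then transports along this quasi-isomorphism. The multiplication on $\cH(\NMix(k;R)^\oplus)$ arises from the symmetric monoidal structure of $\NMix(k;R)^\oplus$, which on generators satisfies $U_R(\cA) \otimes U_R(\cB) \simeq U_R(\cA \otimes \cB)$, matching the tensor product of dg categories described for $\mathrm{C}$. The comultiplication arises from the coevaluations $\mathbf{1} \to U_R(\cA_i) \otimes U_R(\cA_i^\op)$ of the duality; applying $HH^k_R$ together with the K{\"u}nneth isomorphism sends the unit to $HH^k_R(\Id_{\cA_i}) \in HH^k_R(\cA_i) \otimes HH^k_R(\cA_i^\op)$, which is precisely the insertion map on $\mathrm{C}$. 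The main obstacle will be to rigidify all of these identifications at the simplicial level, rather than only after passing to sum-total complexes: one must verify that the face and degeneracy operators coming from composition in $\NMix(k;R)^\oplus$ match those built from composition of exact cubes, and that the multiplication and comultiplication can then be compared simplex by simplex instead of merely up to higher coherence.
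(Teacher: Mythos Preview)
Your outline is essentially the paper's argument, just unpacked. The paper's proof consists of noting that $\{U_R(\cA)\,|\,\cA\in\dgcat_{\mathrm{sp}}(k)\}$ is a set of compact generators (Remark~\ref{rk:symmetric}), that $HH^k_R$ preserves compact objects (proof of Lemma~\ref{lem:sums}), and then invoking Pridham's \cite[Prop.~1.18 and Rk.~2.11]{Pridham} applied to $HH^k_R$ and to the dg enhancement of \S\ref{sub:cubes}. Pridham's result is precisely the general two-sided bar description you sketch: it packages the resolution of the monad $HH^k_R\circ\underline{HH}^k_R$ over compact dualizable generators and outputs the simplicial object of the statement, together with the bialgebra structure.

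In particular, the ``main obstacle'' you flag---rigidifying the identifications $\uHom_{\cD(R)}(U_R(\cA),U_R(\cB))\simeq\square_R(\cA,\cB)$ so that faces, degeneracies, multiplication and comultiplication all match strictly---is exactly what Pridham's framework is designed to handle, once one feeds in the strict dg enrichment built in \S\ref{sub:cubes} (including the strictification of the tensor product of bimodules mentioned in the footnote there). So rather than carrying out the simplex-by-simplex comparison yourself, the cleanest route is to verify the hypotheses of Pridham's result and cite it. Two small corrections: the duality $U_R(\cA)^\vee\simeq U_R(\cA^\op)$ is Remark~\ref{rk:dualizable}, not Remark~\ref{rk:last}; and in your bar term the endpoint $\Map(U_R(\cA_n),\underline{HH}^k_R(R))$ is, by adjunction, $\uHom_{\cD(R)}(HH^k_R(\cA_n),R)\simeq HH^k_R(\cA_n)^\vee\simeq HH^k_R(\cA_n^\op)$, so the placement of the ${}^\op$'s in your sketch is swapped relative to the statement---this is a matter of conventions in the bar, but worth tracking when you write it up.
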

Note that Proposition \ref{prop:explicit} renders the motivic dg bialgebra strict. Its proof is based on Pridham's work \cite{Pridham}. As explained in \S\ref{sub:cubes}, the complex of exact cubes computes the spectrum homology of algebraic $K$-theory. Therefore, Proposition \ref{prop:explicit} also shows that the motivic dg bialgebra can be explicitly described using solely Hochschild homology and algebraic $K$-theory of smooth proper~dg~categories.

Now, let $\mathrm{MATM}(k;R)$ be the classical triangulated category of mixed Artin-Tate motives (see Wildeshaus \cite{Wild}) and $\mathrm{MATM}(k;R)^\oplus$ the smallest triangulated subcategory of $\mathrm{DM}(k;R)$ which contains $\mathrm{MATM}(k;R)$ and is closed under arbitrary direct sums. Under these notations, the motivic Hopf dg algebra associated to the category of noncommutative mixed Artin motives admits the following description:


\begin{theorem}\label{thm:main1}
When $R$ is a $k$-algebra, the following functors
\begin{eqnarray*}\label{eq:composed}
\NMAM(k;R)^\oplus \stackrel{HH^k_R}{\too} \cD(R) && \mathrm{MATM}(k;R)^\oplus \stackrel{\Psi}{\too} \mathrm{NMAM}(k;R)^\oplus \stackrel{HH^k_R}{\too} \cD(R)
\end{eqnarray*}
satisfy the conditions of Ayoub's weak Tannakian formalism. Moreover, we have an isomorphism of Hopf dg algebras 
$$\cH(\NMAM(k;R)^\oplus) \simeq \cH(\mathrm{MATM}(k;R)^\oplus)\otimes_{R[t,t^{-1}]}R\,,$$ where the $R[t,t^{-1}]$-linear structure is induced by the $\otimes$-invertible Tate motive $R(1)[2]$.
\end{theorem}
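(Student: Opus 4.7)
My plan proceeds in three stages. First I would show that both functors satisfy Ayoub's conditions by bootstrapping from Proposition \ref{prop:main}. The category $\NMAM(k;R)^\oplus$ is a compactly generated symmetric monoidal triangulated subcategory of $\NMix(k;R)^\oplus$, with compact dualizable generators $U_R(l)$ for $l/k$ finite separable; hence the restriction of $HH^k_R$ inherits the relevant properties (symmetric monoidality, triangulated structure, preservation of arbitrary direct sums, and the projection formula on compact dualizable generators). For the composed functor, $\mathrm{MATM}(k;R)^\oplus$ is compactly generated by the dualizable Artin--Tate motives $M_R(\Spec(l))(n)$; by Theorem \ref{thm:main0}, $\Psi$ is symmetric monoidal, triangulated, and preserves direct sums, so the composition with $HH^k_R$ inherits all required properties.

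Next I would construct the comparison using Theorem \ref{thm:orbit}, which factors $\Psi$ through the orbit category of $\DM(k;R)$ with respect to the Tate twist $-\otimes R(1)[2]$. The universal property built into Ayoub's formalism then produces a canonical morphism $\cH(\mathrm{MATM}(k;R)^\oplus) \to \cH(\NMAM(k;R)^\oplus)$ of Hopf dg algebras in $\cD(R)$. The $\otimes$-invertible Tate motive induces an invertible element $t$ in the graded endomorphism ring of $\cH(\mathrm{MATM}(k;R)^\oplus)$, endowing it with an $R[t,t^{-1}]$-linear structure. Since $\Psi$ sends $R(1)[2]$ to the unit (this being the defining property of the orbit projection), $t$ is carried to $1$ in $\cH(\NMAM(k;R)^\oplus)$, so the canonical map factors through $\cH(\mathrm{MATM}(k;R)^\oplus) \otimes_{R[t,t^{-1}]} R$.

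Finally, to show the induced map is an isomorphism, I would invoke the general principle developed in the earlier part of the paper on Ayoub's formalism applied to orbit categories: for a compactly generated symmetric monoidal triangulated category $\cC$ equipped with a $\otimes$-invertible compact object $T$, the Hopf dg algebra attached to the orbit category $\cC/(-\otimes T)$ is naturally identified with $\cH(\cC) \otimes_{R[t,t^{-1}]} R$, where $t$ is the class of $T$. Applied to $T = R(1)[2]$ acting on $\mathrm{MATM}(k;R)^\oplus$, this yields the claimed identification, whose morphism-level content is the orbit-category formula $\Hom_{\NMAM^\oplus}(X,Y) \simeq \bigoplus_{n \in \bbZ} \Hom_{\MATM^\oplus}(X, Y(n)[2n])$ transported through the simplicial object of Proposition \ref{prop:explicit}.

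The main obstacle lies in this last step: one must verify the orbit-category/Hopf-algebra compatibility at the strict dg level (not merely up to quasi-isomorphism in $\cD(R)$), showing that the $R[t,t^{-1}]$-action acts by Hopf dg algebra endomorphisms so that the tensor product $-\otimes_{R[t,t^{-1}]}R$ inherits a well-defined Hopf dg algebra structure. This compatibility must be tracked through the explicit bar-style description of Proposition \ref{prop:explicit} — including comultiplication (insertion of identities) and antipode — and not just at the level of the underlying dg algebra. Pridham's strictification underlying Proposition \ref{prop:explicit} is what makes this verification tractable, but confirming that the orbit quotient commutes with all three pieces of the Hopf structure simultaneously is the delicate point.
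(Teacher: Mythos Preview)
Your first stage is correct and matches the paper exactly. Your third-stage invocation of Proposition~\ref{prop:orbit} is also the paper's route, and verifying its hypotheses (i)--(iv) with $\cC=\mathrm{MATM}(k;R)^\oplus$, $O=R(1)[2]$, $\cC'=\NMAM(k;R)^\oplus$, $F=\Psi$ is precisely how the paper proceeds, using Theorem~\ref{thm:orbit} for condition~(iv).

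However, your account of what Proposition~\ref{prop:orbit} actually requires, and your identified ``main obstacle'', are misplaced. The paper's proof of Proposition~\ref{prop:orbit} never touches the bar complex of Proposition~\ref{prop:explicit} or Pridham's strictification. It is a short adjoint-functor computation: writing $G$ for the right adjoint of $F$, Lemma~\ref{lem:sums-last} shows that $(F\circ G)(c)\simeq c[t,t^{-1}]$ for any commutative algebra object $c\in\cC'$, using only the orbit-category $\Hom$ formula on compact generators together with the $2$-isomorphism $\pi\circ(-\otimes O)\stackrel{\sim}{\Rightarrow}\pi$. Applying this with $c=\underline{\omega}'(R)$ gives $\cH(\cC)\simeq\cH(\cC')[t,t^{-1}]$ directly as Hopf dg algebras in $\cD(R)$, and the claimed identification follows from the evident short exact sequence $R[t,t^{-1}]\to\cH(\cC')[t,t^{-1}]\to\cH(\cC')$. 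All of the Hopf structure lives inside Ayoub's formalism in $\cD(R)$; no strict dg-level verification is needed, and Proposition~\ref{prop:explicit} plays no role whatsoever in this theorem. Your second stage (building the comparison map first and then proving it is an isomorphism) is likewise unnecessary: the isomorphism drops out of the adjoint computation in one step.
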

\begin{remark}
Let $X \in \mathrm{SmProj}(k)$ with $k$ of characteristic zero. As proved by Weibel in \cite[Prop.~1.3 and Cor.~1.4]{Weibel1}, the $n^{\mathrm{th}}$ homology of $HH^k(\perf_\dg(X))\simeq HH^k(X)$ identifies with $\bigoplus_{p-q=n} H^q(X,\Omega^p_X)$. Making use of Remark \ref{rk:last}, we hence conclude that the above right-hand side functor maps $M_R(X)$ to $\bigoplus_n \bigoplus_{p-q=n} H^q(X,\Omega_X^p)_R$.
\end{remark}
Roughly speaking, Theorem \ref{thm:main1} shows that the default between noncommutative mixed Artin motives and mixed Artin-Tate motives is measured solely by the existence of an $R[t,t^{-1}]$-linear structure. Its proof is based on the behavior of Ayoub's weak Tannakian formalism with respect to orbit categories; see~Proposition~\ref{prop:orbit}.




Let $\cC^0(\mathrm{Gal}(\overline{k}/k),R)$ be the Hopf $R$-algebra of continuous functions with multiplication (resp. comultiplication) induced by the multiplication in $R$ (resp. in $\mathrm{Gal}(\overline{k}/k)$), and $\mathrm{MAM}(k;R)^\oplus$ the smallest triangulated subcategory of $\mathrm{DM}(k;R)$ which contains $\mathrm{MAM}(k;R)$ and is closed under arbitrary direct sums, and . 
\begin{theorem}\label{thm:new}
When $R$ is a $k$-algebra, the following functor
\begin{equation}\label{eq:functor-Mix-Artin}
\mathrm{MAM}(k;R)^\oplus \stackrel{\Psi}{\too} \mathrm{NMAM}(k;R)^\oplus \stackrel{HH^k_R}{\too} \cD(R)
\end{equation}
satisfies the conditions of Ayoub's weak Tannakian formalism. Moreover, when $k \subset \bbC$ and $R$ is a $\bbC$-algebra, the associated Hopf dg algebra $\cH(\mathrm{MAM}(k;R)^\oplus)$ is concentrated in degree zero and agrees with $\cC^0(\mathrm{Gal}(\overline{k}/k),R)$.
\end{theorem}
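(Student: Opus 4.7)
My plan is to reduce Theorem \ref{thm:new} to the classical Tannakian theorem for the (semisimple) abelian category $\mathrm{AM}(k)_R$ of Artin motives, using the triviality of the higher Ext groups in $\mathrm{MAM}(k;R)$.

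First, I would verify that the composite functor \eqref{eq:functor-Mix-Artin} meets Ayoub's conditions. Since $\mathrm{MAM}(k;R)^\oplus$ sits as a triangulated subcategory of $\mathrm{MATM}(k;R)^\oplus$ closed under arbitrary direct sums, and Theorem \ref{thm:main1} establishes that the analogous composition on $\mathrm{MATM}(k;R)^\oplus$ satisfies these conditions, it suffices to check that passing to the Artin subcategory is compatible with the relevant compactness, projection, and conservativity hypotheses. Concretely, $\mathrm{MAM}(k;R)^\oplus$ is compactly generated by the motives $M_R(X)$ for $X = \mathrm{Spec}(l)$ with $l/k$ finite separable; by Remark \ref{rk:last} these are sent by $\Psi$ to $U_R(\perf_\dg(X))$, and $HH^k_R$ preserves compactness on smooth proper dg categories.

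Next, I would compute the Hopf dg algebra. Voevodsky's identification $\mathrm{MAM}(k;R) \simeq \cD^b(\mathrm{AM}(k)_R)$ together with semisimplicity of $\mathrm{AM}(k)_R$ implies that $\mathrm{MAM}(k;R)^\oplus$ is equivalent to $\mathrm{Gr}_\bbZ(\mathrm{AM}(k)_R)^\oplus$, so all nontrivial morphisms live in degree zero. Because $l/k$ is étale we have $HH^k(l/k) \simeq l$ concentrated in degree zero, hence $HH^k_R \circ \Psi$ carries $M_R(\mathrm{Spec}(l))$ to $l \otimes_k R$, concentrated in degree zero. Thus \eqref{eq:functor-Mix-Artin} factors through a symmetric monoidal exact $R$-linear functor $\omega_0 : \mathrm{AM}(k)_R \to \mathrm{Mod}(R)$, and Ayoub's construction, applied to a triangulated category whose morphisms vanish outside degree zero, reduces in $\cD(R)$ to the classical Tannakian Hopf algebra of the pair $(\mathrm{AM}(k)_R, \omega_0)$, concentrated in degree zero.

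Finally, for $k \subset \bbC$ and $R$ a $\bbC$-algebra, the natural isomorphism $l \otimes_k R \simeq (l \otimes_k \bbC) \otimes_\bbC R \simeq R^{\mathrm{Hom}_k(l,\bbC)}$ shows that $\omega_0$ coincides with the standard fiber functor sending $\mathrm{Spec}(l)$ to the $\mathrm{Gal}(\overline{k}/k)$-set of $k$-embeddings of $l$ into $\overline{k}$. By the classical Tannakian description of Artin motives already recalled in \cite{Artin} for the pure case, the associated affine group scheme is $\mathrm{Gal}(\overline{k}/k)$ and its Hopf algebra of functions is $\cC^0(\mathrm{Gal}(\overline{k}/k), R)$. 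The main obstacle I anticipate is precisely the reconciliation of Ayoub's weak Tannakian output, which is a priori a dg bialgebra built from coevaluation morphisms in the triangulated category, with the classical Hopf algebra attached to $(\mathrm{AM}(k)_R, \omega_0)$; this comparison should follow from the universal property of both constructions together with the concentration in degree zero (so that no genuine derived information is lost), but it requires careful tracking of the comultiplication, in the spirit of the orbit-category computation underlying Proposition \ref{prop:orbit} and Theorem \ref{thm:main1}.
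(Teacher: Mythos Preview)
Your approach is viable but differs from the paper's in a meaningful way. You aim to reduce directly to the classical Tannakian theory of the full abelian category $\AM(k)_R$, exploiting the vanishing of higher Exts to collapse Ayoub's derived construction to degree zero. The paper instead filters $\MAM(k;R)^\oplus$ by the subcategories $\MAM(l/k;R)^\oplus$ generated by intermediate finite Galois extensions $\overline{k}/l/k$, proves (Proposition~\ref{prop:Hopf1}) that $\cH(\MAM(k;R)^\oplus)$ is the filtered colimit of the $\cH(\MAM(l/k;R)^\oplus)$, and then (Proposition~\ref{prop:Hopf2}) computes each of these as $\cC(\mathrm{Gal}(l/k),R)$ by identifying $\MAM(l/k;R)^\oplus$ with $\cD(\mathrm{coMod}(\cC(\mathrm{Gal}(l/k),R)))$ and the composite functor with the forgetful functor, at which point Ayoub's universal property \cite[Prop.~1.55]{Ayoub1} applies directly.

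The paper's filtration is precisely a device to dispatch the obstacle you flag at the end: for a \emph{finite} group $G$, the equivalence $\mathrm{Rep}_R(G)\simeq \mathrm{coMod}(\cC(G,R))$ holds on the nose and the comparison of Ayoub's Hopf dg algebra with the classical Hopf algebra is immediate from \cite[Prop.~1.55]{Ayoub1}; the profinite case then falls out of the colimit $\mathrm{colim}_{\overline{k}/l/k}\cC(\mathrm{Gal}(l/k),R)\simeq \cC^0(\mathrm{Gal}(\overline{k}/k),R)$. Your direct route would require either a profinite version of the comodule identification together with an invocation of \cite[Prop.~1.55]{Ayoub1} in that generality, or an independent argument that Ayoub's $(\omega\circ\underline{\omega})({\bf 1})$ for a triangulated category with semisimple heart recovers the classical Tannakian coend --- both plausible, but neither a one-liner. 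What your approach buys is conceptual economy (no auxiliary filtration); what the paper's buys is that the delicate comparison step is outsourced to a citable statement in \cite{Ayoub1} applied in the cleanest possible setting, and that the same colimit machinery is reused verbatim in the proof of Theorem~\ref{thm:main2}.
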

The inclusion of categories $\mathrm{MAM}(k;R)^\oplus \subset \mathrm{MATM}(k;R)^\oplus$ gives rise to an injective morphism of Hopf dg algebras $\cC^0(\mathrm{Gal}(\overline{k}/k),R) \to \cH(\mathrm{MATM}(k;R)^\oplus)$. Hence, the above Theorems \ref{thm:main1} and \ref{thm:new} show that, in contrast with the pure world, the motivic Hopf dg algebra  $\cH(\NMAM(k;R)^\oplus)$ carries much more information than just continuous functions on the absolute Galois group. Roughly speaking, all the above extensions \eqref{eq:R-mod-isos} contribute to the motivic Hopf dg algebra. 

Finally, the short exact sequence \eqref{eq:short-exact} admits the following mixed analogue:
\begin{theorem}\label{thm:main2}
When $k\subset \bbC$ and $R$ is a $\bbC$-algebra, we have the following short exact sequence of Hopf dg algebras
\begin{equation*}\label{eq:short-exact2}
\quad \quad 1 \too \cC^0(\mathrm{Gal}(\overline{k}/k),R) \too \cH(\NMix(k;R)^\oplus) \too \cH(\NMix(\overline{k};R)^\oplus) \too 1\,,
\end{equation*}
where the first map is induced by the functor $\Psi: \mathrm{MAM}(k;R)^\oplus \to \NMix(k;R)^\oplus$ and the second one by the base-change functor $-\otimes_k \overline{k}: \NMix(k;R)^\oplus \to \NMix(\overline{k};R)^\oplus$.
\end{theorem}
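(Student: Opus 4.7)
The plan is to apply the general criterion of Ayoub's weak Tannakian formalism \cite{Ayoub1,Ayoub2} for when a sequence of symmetric monoidal functors gives rise to a short exact sequence of Hopf dg algebras. Consider the factorization
\[\mathrm{MAM}(k;R)^\oplus \stackrel{\Psi}{\too} \NMix(k;R)^\oplus \stackrel{-\otimes_k\overline{k}}{\too} \NMix(\overline{k};R)^\oplus,\]
each equipped with the Hochschild homology fiber functor $HH^k_R$ to $\cD(R)$, which is compatible along base change since $HH^{\overline{k}}(\cA\otimes_k\overline{k}) \simeq HH^k(\cA)\otimes_k\overline{k}$ for every smooth proper dg category $\cA$ over $k$. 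By Proposition \ref{prop:main} and Theorem \ref{thm:new}, each of the three functors (post-composed with $HH^k_R$) satisfies Ayoub's hypotheses, so functoriality of the construction $\cC \mapsto \cH(\cC)$ yields the required morphisms of Hopf dg algebras. The leftmost term is identified with $\cC^0(\mathrm{Gal}(\overline{k}/k),R)$ by Theorem \ref{thm:new}.

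Second, I would verify the outer exactness and the triviality of the composite. The composite is trivial because $\overline{k}$ admits no non-trivial finite separable extensions, hence $\Psi(\mathrm{MAM}(\overline{k};R)^\oplus)$ is generated by $U_R(\overline{k})$ and, again by Theorem \ref{thm:new} applied over $\overline{k}$, its Hopf dg algebra is simply $R$. The first map is injective because $\Psi$ is fully faithful on the generators of $\mathrm{MAM}(k;R)^\oplus$; this reduces, via the pure-world equivalence $\mathrm{AM}(k)_R \simeq \NAM(k)_R$ of \cite{Artin} combined with \eqref{eq:R-mod-isos}, to the corresponding injectivity in the pure case \eqref{eq:short-exact}. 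The second map is surjective because every smooth proper dg category over $\overline{k}$ descends, up to Morita equivalence, to a finite subextension of $k$, so the base-change functor is essentially surjective on generators.

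The main obstacle is the exactness in the middle, namely the identification of the kernel of $\cH(\NMix(k;R)^\oplus) \to \cH(\NMix(\overline{k};R)^\oplus)$ with the image of $\cC^0(\mathrm{Gal}(\overline{k}/k),R)$. For this I would adapt the descent argument used in \cite{Galois} for the pure analogue \eqref{eq:short-exact}: the key input is a Galois descent statement realizing $\NMix(\overline{k};R)^\oplus$ as a quotient of $\NMix(k;R)^\oplus$ by the $\mathrm{Gal}(\overline{k}/k)$-action encoded in the Artin piece. Translating this descent into Hopf-comodule data via Ayoub's formalism, and verifying that it is visible through the fiber functor $HH^k_R$ using the base-change compatibility of Hochschild homology recalled above, then yields the desired exactness. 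This descent step, together with the comodule translation, is the most delicate part of the argument.
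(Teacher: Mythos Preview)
Your proposal captures the overall shape of the argument but leaves the decisive step---exactness in the middle---as a black box, and the route you sketch for it (``adapt the descent argument from \cite{Galois}'') is not what the paper does. The paper's proof proceeds via a two-step reduction that you do not mention.

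First, the paper shows that the base-change morphism induces an isomorphism
\[
\mathrm{colim}_{\overline{k}/l/k}\,\cH(\NMix(l;R)^\oplus)\;\stackrel{\sim}{\too}\;\cH(\NMix(\overline{k};R)^\oplus),
\]
the colimit running over intermediate \emph{finite} Galois extensions (Proposition~\ref{prop:colim}). The key lemma here is that $\mathrm{colim}_{\overline{k}/l/k}\bigl(\mathrm{Res}_{\overline{k}/l}(N)\otimes_l\overline{k}\bigr)\simeq N$ for every $N\in\NMix(\overline{k};R)^\oplus$, which in turn rests on the descent fact you allude to (every smooth proper $\overline{k}$-dg category comes from some finite subextension) together with compactness. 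Combined with the analogous colimit description of $\cC^0(\mathrm{Gal}(\overline{k}/k),R)$ (already established in the proof of Theorem~\ref{thm:new}), this identifies the sequence of the theorem with the filtered colimit of the sequences
\[
\cC(\mathrm{Gal}(l/k),R)\too\cH(\NMix(k;R)^\oplus)\too\cH(\NMix(l;R)^\oplus)
\]
for $l/k$ finite Galois.

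Second, the paper proves that each of these finite-level sequences is short exact (Proposition~\ref{prop:last}) by an \emph{explicit computation}: using the restriction functor $(-)_k$ as right adjoint to $-\otimes_k l$, one shows $\mathrm{Res}_{l/k}(N)\otimes_k l\simeq\bigoplus_{\sigma\in\mathrm{Gal}(l/k)}{}^\sigma N$ (via the classical decomposition $l\otimes_k l\simeq\prod_\sigma{}^\sigma l$), and hence identifies $\cH(\NMix(k;R)^\oplus)$ with $\bigoplus_{\sigma}HH^l_R({}^\sigma\underline{HH}^l_R(R))$. Under this identification the two maps become, respectively, the direct sum of the units and the projection onto the $\sigma=1$ summand, from which exactness is read off directly.

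Your argument for injectivity (``reduce to the pure case \eqref{eq:short-exact}'') is not convincing: injectivity of a Hopf dg algebra map is not a statement about full faithfulness of the underlying categorical functor, and the pure sequence \eqref{eq:short-exact} is conditional on the standard conjectures, so it cannot serve as input here. Likewise, your surjectivity argument (essential surjectivity on generators) is necessary but not sufficient at the Hopf-algebra level. The paper avoids both issues by making the maps completely explicit at each finite stage.
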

Intuitively speaking, Theorem \ref{thm:main2} shows that the default between noncommutative mixed motives over $k$ and noncommutative mixed motives over $\overline{k}$ is measured solely by the profinite absolute Galois group.
\medbreak\noindent\textbf{Acknowledgments:} The author is very grateful to Joseph Ayoub for kindly teaching him his beautiful weak Tannakian formalism \cite{Ayoub1,Ayoub2} and also for comments and corrections on a previous version of this article. He would also like to thank Clark Barwick, Guillermo Corti{\~n}as, Haynes Miller, Amnon Neeman, and Niranjan Ramachandran for useful conversations.
\section{Preliminaries}
\subsection{Notations}
Throughout the article, $k$ will be a base (perfect) field and $R$ a commutative ring of coefficients. Given a closed symmetric monoidal category $(\cC,\otimes, {\bf 1})$, we will write $\uHom(-,-)$ for its internal Hom and $(-)^\vee:=\uHom(-,{\bf 1})$ for the duality functor. When $\cC$ is enriched over a symmetric monoidal category $\cE$, we will write $\uHom_\cE(-,-)$ for this enrichment. We will use freely the language of Quillen model categories  \cite{Goerss,Hirschhorn,Hovey-book,Quillen}, of exact categories \cite{Quillen1}, of Waldhausen categories \cite{Wald},  and of $\infty$-categories \cite{Lurie1,Lurie2}. Given a Quillen model category $\cC$, we will write $\Ho(\cC)$ for its homotopy category and $\cC^\infty$ for the associated $\infty$-category. Finally, adjunctions will be displayed vertically with the left (resp. right) adjoint on the left (resp. right) hand side. 
\subsection{Dg categories}\label{sec:dg}
Let $\cC(k)$ be the category of cochain complexes of $k$-vector spaces. A {\em differential graded (=dg) category $\cA$} is a $\cC(k)$-enriched category and a {\em dg functor} $F:\cA\to \cB$ is a $\cC(k)$-enriched functor; consult Keller's ICM survey \cite{ICM-Keller}.

Let $\cA$ be a dg category. The category $\dgHo^0(\cA)$ has the same objects as $\cA$ and $\dgHo^0(\cA)(x,y):=H^0\cA(x,y)$. The opposite dg category $\cA^\op$ has the same objects as $\cA$ and $\cA^\op(x,y):=\cA(y,x)$. A  right $\cA$-module is a dg functor $\cA^\op \to \cC_\dg(k)$ with values in the dg category $\cC_\dg(k)$ of cochain complexes of $k$-vector spaces. Let $\cC(\cA)$ be the category of right $\cA$-modules. As explained in \cite[\S3.1-3.2]{ICM-Keller}, $\cC(\cA)$ carries a projective Quillen model structure. Moreover, the dg structure of $\cC_\dg(k)$ makes $\cC(\cA)$ into a dg category $\cC_\dg(\cA)$. The derived category $\cD(\cA)$ of $\cA$ is defined as the localization of $\cC(\cA)$ with respect to the objectwise quasi-isomorphisms. Its full triangulated subcategory of compact objects will be denoted by $\cD_c(\cA)$.

A dg functor $F:\cA\to \cB$ is called a {\em Morita equivalence} if it induces an equivalence on derived categories $\cD(\cA) \stackrel{\simeq}{\to} \cD(\cB)$; see \cite[\S4.6]{ICM-Keller}. As proved in \cite[Thm.~5.3]{IMRN}, $\dgcat(k)$ admits a Quillen model structure whose weak equivalences are the Morita equivalences. Let us write $\Hmo(k)$ for the associated homotopy category.

The tensor product $\cA\otimes\cB$ of dg categories is defined as follows: the set of objects is the cartesian product of the sets of objects and $(\cA\otimes\cB)((x,w),(y,z)):= \cA(x,y) \otimes \cB(w,z)$. As explained in \cite[\S2.3 and \S4.3]{ICM-Keller}, this construction gives rise to symmetric monoidal structure on $\dgcat(k)$ which descends to $\Hmo(k)$.

An $\cA\text{-}\cB$-bimodule is a dg functor $\mathrm{B}:\cA \otimes \cB^\op\to \cC_\dg(k)$ or equivalently a right $(\cA^\op \otimes \cB)$-module. A standard example is the $\cA\text{-}\cB$-bimodule
\begin{eqnarray}\label{eq:bimodules111}
{}_F\mathrm{B}:\cA\otimes \cB^\op \too \cC_\dg(k) && (x,w) \mapsto \cB(w,F(x))
\end{eqnarray}
associated to a dg functor $F:\cA \to \cB$. Let us denote by $\rep(\cA,\cB)$ the full triangulated subcategory of $\cD(\cA^\op \otimes \cB)$ consisting of those $\cA\text{-}\cB$-bimodules $\mathrm{B}$ such that $\mathrm{B}(x,-) \in \cD_c(\cB)$ for every object $x \in \cA$. In the same vein, let $\rep_\dg(\cA,\cB)$ be the full dg subcategory of $\cC_\dg(\cA^\op \otimes \cB)$ consisting of those $\cA\text{-}\cB$-bimodules which belong to $\rep(\cA,\cB)$. By construction, we have $\dgHo^0(\rep_\dg(\cA,\cB))\simeq \rep(\cA,\cB)$.
\subsection{Finite dg cells}\label{sub:cells}
For $n \in \mathbb{Z}$, let $S^{n}$ be the cochain complex $k[n]$ and $D^n$ the mapping
cone of the identity on $S^{n-1}$. Let $\dgS(n)$ be the dg
category with objects $1,2$ such that $ \dgS(n)(1,1)=k \ko
\dgS(n)(2,2)=k \ko \dgS(n)(2,1)=0  \ko \dgS(n)(1,2)=S^{n} $ and with composition given by multiplication. Similarly, let $\dgD(n)$ be the dg
category with objects $3,4$ such that $ \dgD(n)(3,3)=k \ko
\dgD(n)(4,4)=k \ko \dgD(n)(4,3)=0$ and $\dgD(n)(3,4)=D^n $. For $n \in \bbZ$, let $\iota(n):\dgS(n-1)\to \dgD(n)$ be the dg functor that sends $1$ to
$3$, $2$ to $4$ and $S^{n-1}$ to $D^n$ by the identity on $k$ in
degree~$n-1$\,:
$$
\vcenter{
\xymatrix@C=2em@R=1em{
\dgS(n-1) \ar@{=}[d] \ar[rrr]^{\displaystyle \iota(n)}
&&& \dgD(n) \ar@{=}[d]
\\
&&&\\
\\
1 \ar@(ul,ur)[]^{k} \ar[dd]_-{S^{n-1}}
& \ar@{|->}[r] &
& 3\ar@(ul,ur)[]^{k}  \ar[dd]^-{D^n}
\\
& \ar[r]^-{\incl} &
\\
2 \ar@(dr,dl)[]^{k}
& \ar@{|->}[r] &
& 4\ar@(dr,dl)[]^{k}
}}
\qquad\text{where}\qquad
\vcenter{\xymatrix@R=1em@C=.8em{ S^{n-1} \ar[rr]^-{\incl} \ar@{=}[d]
&& D^n \ar@{=}[d]
\\
\ar@{.}[d]
&& \ar@{.}[d]
\\
0 \ar[rr] \ar[d]
&& 0 \ar[d]
\\
0 \ar[rr] \ar[d]
&& k \ar[d]^{\id}
\\
k \ar[rr]^{\id} \ar[d]
&& k \ar[d]
&{\scriptstyle(\textrm{degree }n-1)}
\\
0 \ar[rr] \ar@{.}[d]
&& 0 \ar@{.}[d]
\\
&&}}
$$
A dg category $\cA$ is called a \emph{finite dg cell} if the unique dg functor $\emptyset\to\cA$ (where the empty dg category $\emptyset$ is the initial object of $\dgcat(k)$) can be expressed as a finite composition of pushouts along the dg functors $\iota(n), n \in \bbZ$, and $\emptyset \to k$.
\subsection{Orbit categories}\label{sub:orbit}
Let $\cC$ be an additive symmetric monoidal category and $O \in \cC$ a $\otimes$-invertible object. Recall from \cite{CvsNC} that the {\em orbit category} $\cC\!/_{\!\!-\otimes O}$ has the same objects as $\cC$ and morphisms given by
$$\Hom_{\cC\!/_{\!\!-\otimes O}}(a,b):= \bigoplus_{i \in \bbZ} \Hom_\cC(a,b\otimes O^{\otimes i})\,.$$
The composition is induced from $\cC$. 
By construction, $\cC\!/_{\!\!-\otimes O}$ is additive, symmetric monoidal (see \cite[Lem.~7.3]{CvsNC}), and comes equipped with a canonical projection symmetric monoidal functor $\pi: \cC \to \cC\!/_{\!\!-\otimes O}$. Moreover, $\pi$ is endowed with a natural $2$-isomorphism $\pi\circ (-\otimes O) \stackrel{\sim}{\Rightarrow} \pi$ and is $2$-universal among all such functors.
\section{Noncommutative mixed motives with coefficients}\label{sec:NCMixed}
Recall from \cite[Def.~15.1]{Duke} \cite[Thm.~8.5]{CT} the construction of the closed  symmetric monoidal Quillen model category $\mathrm{Mot}(k):= L_\add \Fun (\dgcat_{\mathrm{f}}(k)^\op, \Spt)$ and of the symmetric monoidal functor 
\begin{eqnarray*}
U: \dgcat(k) \too \mathrm{Mot}(k) && \cB \mapsto (\cA \mapsto \Sigma^\infty(N(w\rep_\dg(\cA,\cB))_+))\,.
\end{eqnarray*}
Several explanations are in order: $\dgcat_{\mathrm{f}}(k)$ is obtained by stabilizing the finite dg cells with respect to tensor products, (co)fibrant resolutions, and left framings; $\Spt$ is the projective stable Quillen model category of symmetric spectra \cite{HSS}; $w\rep_\dg(\cA,\cB)$ is the subcategory of quasi-isomorphisms of $\rep_\dg(\cA,\cB)$; $N(-)$ is the nerve functor; $\Sigma^\infty(-_+)$ is the suspension symmetric spectrum functor; and $L_{\add}$ stands for the left Bousfield localization of $\Fun (\dgcat_{\mathrm{f}}(k)^\op, \Spt)$ with respect to the stabilization under (de)suspensions of the following sets of morphisms (consult \cite[\S13]{Duke} for the notion of a short exact sequence of dg categories)
$$
\{U(F):U(\cA) \to U(\cB)\,|\,F\,\,\mathrm{is}\,\,\mathrm{a}\,\,\mathrm{Morita}\,\,\mathrm{equivalence}\,\,\,\,\cA, \cB \in \dgcat_{\mathrm{f}}(k)\}
$$
$$ \{ \mathrm{cofiber} (U\left(\cA) \to U(\cB)\right) \to U(\cC)\,|\, \xymatrix@C=1.5em{
0 \ar[r] & \cA \ar[r] & \cB \ar@/_0.5pc/[l]
\ar[r] & \cC \ar@/_0.5pc/[l] \ar[r]& 0
}\,\, \cA, \cB, \cC \in \dgcat_{\mathrm{f}}(k) \}$$
Finally, the symmetric monoidal structure is given by the Day convolution product. The associated homotopy category $\Ho(\Mot(k))$ is naturally enriched over $\Ho(\Spt)$. Given dg categories $\cA$ and $\cB$, with $\cA$ smooth proper, we have an isomorphism
\begin{equation}\label{eq:spectra}
\uHom_{\Ho(\Spt)}(U(\cA),U(\cB))\simeq K(\cA^\op \otimes \cB)\,,
\end{equation}
Moreover, the composition law is induced by the tensor product of bimodules.

Following Kontsevich \cite{Miami,IAS,finMot}, the category of noncommutative mixed motives $\NMix(k)$ was introduced in \cite[\S9.2]{CT} as the smallest thick triangulated subcategory of $\Ho(\mathrm{Mot}(k))$ containing the objects $\{U(\cA)\,|\, \cA\in \dgcat_{\mathrm{sp}}(k)\}$. 
\subsection{Coefficients}\label{sub:coefficients}
Let $HR \in \Spt$ be the Eilenberg-MacLane ring spectrum of $R$ and $\mathrm{Mod}(HR)$ the symmetric monoidal Quillen model category of $HR$-modules; see \cite[Cor.~5.4.2]{HSS}. By construction, we have the following Quillen adjunction:
\begin{equation}\label{eq:Quillen-1}
\xymatrix{
\mathrm{Mod}(HR) \ar@<1ex>[d]^-{\mathrm{forget}} \\
\Spt \ar@<1ex>[u]^-{-\wedge HR}\,.
}
\end{equation}
\begin{definition}\label{def:model-R}
Let $\Mot(k;R)$ be the closed symmetric monoidal Quillen model category $L_\add\Fun(\dgcat_{\mathrm{f}}(k)^\op, \mathrm{Mod}(HR))$ and 
\begin{eqnarray*}
U_R: \dgcat(k) \too \mathrm{Mot}(k;R) && \cB \mapsto (\cA \mapsto \Sigma^\infty(N(w\rep_\dg(\cA,\cB))_+)\wedge HR)
\end{eqnarray*}
the associated symmetric monoidal functor.
\end{definition}

\begin{remark}[Alternative Quillen model]
The Quillen model category $\mathrm{Mod}(HR)$ is Quillen equivalent to the projective Quillen model category $\cC(R)$ of cochain complexes of $R$-modules; see \cite[Thm.~5.1.6]{SS}. Therefore, if in Definition \ref{def:model-R} we replace $\mathrm{Mod}(HR)$ by $\cC(R)$, we obtain a Quillen equivalent model category.
\end{remark}
Note that $\Ho(\Mot(k;R))$ is naturally enriched over $\Ho(\mathrm{Mod}(HR))\simeq \cD(R)$. The above adjunction \eqref{eq:Quillen-1} gives rise to the following Quillen adjunction
\begin{equation}\label{eq:Quillen-2}
\xymatrix{
\Mot(k;R) \ar@<1ex>[d]^-{\mathrm{forget}} \\
\Mot(k) \ar@<1ex>[u]^-{-\wedge HR}\,.
}
\end{equation}
\begin{remark}[Strongly dualizable objects]\label{rk:dualizable}
As proved in \cite[Thm.~5.8]{CT}, the smooth proper dg categories can be characterized as the strongly dualizable objects of the symmetric monoidal category $\Hmo(k)$. The dual of a smooth proper dg category $\cA$ is given by the opposite dg category $\cA^\op$. Since $U_R$ is symmetric monoidal, we hence conclude that the objects $\{U_R(\cA)\,|\, \cA\in \dgcat_{\mathrm{sp}}(k)\}$ are strongly dualizable in the homotopy category $\Ho(\Mot(k;R))$. 
\end{remark}
\begin{proposition}\label{prop:computation}
Given dg categories $\cA$ and $\cB$, with $\cA$ smooth proper, we have
\begin{equation}\label{eq:enrichement-R}
\uHom_{\cD(R)}(U_R(\cA),U_R(\cB)) \simeq K(\cA^\op \otimes \cB) \wedge HR\,.
\end{equation}
When $R$ is a $\bbQ$-algebra, we have moreover $R$-module isomorphisms
\begin{eqnarray*}
\Hom_{\Ho(\Mot(k;R))}(U_R(\cA), U_R(\cB)[-n]) \simeq K_n(\cA^\op \otimes \cB)_R && n \geq 0\,.
\end{eqnarray*}
\end{proposition}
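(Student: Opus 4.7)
My plan is to deduce both isomorphisms from the already-recorded formula \eqref{eq:spectra} by combining the base-change adjunction \eqref{eq:Quillen-2} with the strong dualizability of $U_R(\cA)$ for $\cA$ smooth proper (Remark \ref{rk:dualizable}).

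\textbf{First isomorphism.} By construction of Definition \ref{def:model-R}, the functor $U_R$ coincides with the pointwise composite of $U$ with $-\wedge HR$. The pointwise operation $-\wedge HR$ assembles into a symmetric monoidal left Quillen functor $F\colon \Mot(k)\to \Mot(k;R)$ (compatible with the two instances of $L_\add$, which are defined by formally the same sets of morphisms), and its right adjoint is pointwise forgetting. This derived adjunction is enriched: $\Ho(\Mot(k))$ is enriched over $\Ho(\Spt)$, $\Ho(\Mot(k;R))$ is enriched over $\cD(R)$, and the change of enrichment is the base-change $-\wedge HR\colon \Ho(\Spt)\to \cD(R)$. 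Since $\cA$ is smooth proper, Remark \ref{rk:dualizable} gives that $U(\cA)$ is strongly dualizable in $\Ho(\Mot(k))$ with dual $U(\cA^\op)$; as symmetric monoidal left adjoints preserve duals, $U_R(\cA)=FU(\cA)$ is strongly dualizable in $\Ho(\Mot(k;R))$ with dual $U_R(\cA^\op)$. Now a general property of symmetric monoidal left adjoints is that for any dualizable $X$ and arbitrary $Y$, the canonical map $F(\uHom(X,Y))\to \uHom(FX,FY)$ is invertible, since both sides compute as $F(X^\vee)\otimes FY$. Applying this to $X=U(\cA)$ and $Y=U(\cB)$ and using \eqref{eq:spectra}, I obtain
$$\uHom_{\cD(R)}(U_R(\cA),U_R(\cB)) \;\simeq\; \uHom_{\Ho(\Spt)}(U(\cA),U(\cB))\wedge HR \;\simeq\; K(\cA^\op\otimes\cB)\wedge HR .$$

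\textbf{Second isomorphism.} The standard consequence of the $\cD(R)$-enrichment,
$$\Hom_{\Ho(\Mot(k;R))}(U_R(\cA),U_R(\cB)[-n]) \;\simeq\; \pi_n\,\uHom_{\cD(R)}(U_R(\cA),U_R(\cB)),$$
reduces the claim, via Step 1, to the identification $\pi_n(K(\cA^\op\otimes\cB)\wedge HR)\simeq K_n(\cA^\op\otimes\cB)\otimes_\bbZ R$. I will prove the more general assertion that $\pi_n(X\wedge HR)\simeq \pi_n(X)\otimes_\bbZ R$ for every spectrum $X$ when $R$ is a $\bbQ$-algebra. Indeed, $X\wedge H\bbQ$ is a rational spectrum, so by the equivalence $\Ho(\Spt)_\bbQ\simeq \cD(\bbQ)$ it splits (via its Postnikov tower) as the Eilenberg-MacLane object on the graded $\bbQ$-vector space $\pi_*(X)\otimes\bbQ$. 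Smashing over $H\bbQ$ with the $H\bbQ$-algebra $HR$ yields the Eilenberg-MacLane object on $(\pi_*(X)\otimes\bbQ)\otimes_\bbQ R = \pi_*(X)\otimes_\bbZ R$, giving the claim. Substituting $X=K(\cA^\op\otimes\cB)$ completes the proof.

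\textbf{Expected obstacle.} The real technical point is Step 1: namely, checking that the pointwise smash with $HR$ extends to a symmetric monoidal left Quillen functor between the Bousfield-localized motivic model categories and that the associated derived functor intertwines the two enrichments as claimed. Once this base-change machinery is in place, the dualizability of smooth proper dg categories makes the rest formal, and Step 2 is a standard rational-spectra argument exploiting the flatness of every $\bbQ$-algebra over $\bbZ$.
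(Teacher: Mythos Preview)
Your proof is correct and follows essentially the same approach as the paper: both combine the strong dualizability of $U(\cA)$ for smooth proper $\cA$, the base-change adjunction \eqref{eq:Quillen-2}, and the formula \eqref{eq:spectra}, and then handle the second claim via rational stable homotopy theory. The only cosmetic difference is that the paper first dualizes inside $\Ho(\Mot(k;R))$ to reduce to $\uHom_{\cD(R)}(U_R(k),U_R(\cA^\op\otimes\cB))$ and then invokes the adjunction, whereas you package the same step as the general principle that a symmetric monoidal left adjoint sends $\uHom(X,-)$ to $\uHom(FX,-)$ for $X$ dualizable; your ``expected obstacle'' (compatibility of enrichments under pointwise $-\wedge HR$) is exactly the implicit step the paper uses when passing from \eqref{eq:star-2} to \eqref{eq:star-3}.
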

\begin{remark}
Similarly to $\Ho(\Mot(k))$, the composition law of the homotopy category $\Ho(\Mot(k;R))$ is induced by the tensor product of bimodules.
\end{remark}
\begin{proof}
As mentioned in Remark \ref{rk:dualizable}, the object $U_R(\cA)$ is strongly dualizable with dual $U_R(\cA^\op)$. Hence, we have the following isomorphisms
\begin{eqnarray}
\uHom_{\cD(R)}(U_R(\cA),U_R(\cB)) & \simeq & \uHom_{\cD(R)}(U_R(k), U_R(\cA^\op)\otimes U_R(\cB)) \nonumber \\
& \simeq & \uHom_{\cD(R)}(U_R(k), U_R(\cA^\op \otimes \cB)) \nonumber \\
& \simeq & \uHom_{\Ho(\Spt)}(U(k),U_R(\cA^\op \otimes \cB)) \label{eq:star-2} \\
& \simeq & K(\cA^\op \otimes \cB) \wedge HR\,,\label{eq:star-3}
\end{eqnarray}
where \eqref{eq:star-2} follows from the above adjunction \eqref{eq:Quillen-2} and \eqref{eq:star-3} from isomorphism \eqref{eq:spectra}. This proves the first claim. In what concerns the second claim, note that \eqref{eq:enrichement-R} gives rise to $R$-module isomorphisms
\begin{eqnarray*}
\Hom_{\Ho(\Mot(k;R))}(U_R(\cA), U_R(\cB)[-n]) \simeq \pi_n(K(\cA^\op \otimes \cB)\wedge HR)  && n\geq 0\,.
\end{eqnarray*}
When $R=\bbQ$, the right-hand side identifies with $K_n(\cA^\op \otimes \cB) \otimes_\bbZ \bbQ$; see \cite[page~203]{Adams}. Therefore, in the particular case where $R$ is a $\bbQ$-algebra, we conclude that 
$$ \pi_n(K(\cA^\op \otimes \cB)\wedge HR)\simeq \pi_n(K(\cA^\op \otimes \cB)\wedge H\bbQ)\otimes_\bbQ R  \simeq K_n(\cA^\op \otimes \cB)_R\,.$$
This proves the second claim.
\end{proof}
\begin{example}[Schemes]\label{ex:schemes}
Let $X,Y$ be smooth projective $k$-schemes and $R$ a $\bbQ$-algebra. By combining Proposition \ref{prop:computation} with the Morita equivalences $\perf_\dg(X)^\op \simeq \perf_\dg(X)$ and $\perf_\dg(X)\otimes \perf_\dg(Y) \simeq \perf_\dg(X \times Y)$, we hence conclude that 
\begin{eqnarray*}
\Hom_{\Ho(\Mot(k;R))}(U_R(\perf_\dg(X)),U_R(\perf_\dg(Y))[-n])\simeq K_n(X \times Y)_R && n \geq 0\,.
\end{eqnarray*}
\end{example}
\begin{definition}\label{def:NCmixed}
The category of {\em noncommutative mixed motives $\NMix(k;R)$} is the smallest thick triangulated subcategory of $\Ho(\mathrm{Mot}(k;R))$ containing the objects $\{U_R(\cA)\,|\, \cA\in \dgcat_{\mathrm{sp}}(k)\}$. In the same vein, let $\NMix(k;R)^\oplus$ be the smallest triangulated subcategory of $\Ho(\Mot(k;R))$ which contains $\NMix(k;R)$ and is closed under arbitrary direct sums. Since the smooth proper dg categories are stable under tensor product, the triangulated categories $\NMix(k;R)$ and $\NMix(k;R)^\oplus$ are moreover symmetric monoidal.
\end{definition}
\begin{remark}[Compact generators]\label{rk:symmetric}
As proved in \cite[Cor.~8.7]{CT}, the objects $U(\cA)$, with $\cA \in\dgcat_{\mathrm{sp}}(k)$, are compact. Since the left adjoint functor in \eqref{eq:Quillen-2} preserves compact objects, we hence conclude that $\{U_R(\cA)\,|\, \cA \in \dgcat_{\mathrm{sp}}(k) \}$ is a set of compact generators of the triangulated category $\NMix(k;R)^\oplus$.
\end{remark}
\subsection{Complex of exact cubes}\label{sub:cubes}
Let $\cE$ be an exact category and $\langle-1,0,1\rangle$ the ordered set of three elements (considered as a category). Following Takeda \cite[\S2.1]{Takeda}, a {\em $n$-cube of $\cE$} consists of a covariant functor $\cF:\langle-1,0,1\rangle^n \to \cE$. To every element $(\alpha_1, \ldots, \alpha_n) \in \langle-1,0,1\rangle^{n-1}$ and $1\leq i \leq n$, we have an associated edge
$$ 
\cF_{(\alpha_1, \ldots, \alpha_{i-1}, -1, \alpha_i, \ldots, \alpha_{n-1})} \too 
 \cF_{(\alpha_1, \ldots, \alpha_{i-1}, 0, \alpha_i, \ldots, \alpha_{n-1})} \too
 \cF_{(\alpha_1, \ldots, \alpha_{i-1}, 1, \alpha_i, \ldots, \alpha_{n-1})}\,.
$$
An $n$-cube is called {\em exact} if all its edges are short exact sequences in $\cE$. Let $C_n(\cE)$ be the set of exact $n$-cubes of $\cE$. As explained in {\em loc. cit.}, $\{C_n(\cE)\}_{n \geq 0}$ is naturally a simplicial set. Let us denote by $\square_R(\cE)$ the associated normalized complex of $R$-modules. As proved by McCarthy in \cite[\S3]{McCarthy}, this complex computes the spectrum homology of the algebraic $K$-theory of $\cE$, \ie $H_\ast(\square_R(\cE))\simeq \pi_\ast(K(\cE)\wedge HR)$. 

Let $(\cE,w)$ be a Waldhausen category. Following Gillet-Soul{\'e} \cite[\S6.2]{G-S}, let  $\square_R(\cE,w):=\square_R(\cE)/\square_R(\cE^w)$. As explained in {\em loc. cit.}, we have also $H_\ast(\square_R(\cE,w))\simeq \pi_\ast(K(\cE,w)\wedge HR)$. Moreover, every biexact functor $(\cE,w)\times (\cE',w') \to (\cE'',w'')$ gives rise to a bilinear pairing $\square_R(\cE,w) \otimes_R \square_R(\cE',w')\to \square_R(\cE'',w'')$.

Now, let $\cA$ be a dg category. As explained by Dugger-Shipley in \cite[\S3]{DS}, the full subcategory of $\cC(\cA)$ consisting of the cofibrant right $\cA$-modules which belong to $\cD_c(\cA)$ is naturally a Waldhausen category. Let us then write $\square_R(\cA)$ for the associated complex of exact cubes. Similarly, given dg categories $\cA$ and $\cB$, let $\square_R(\cA,\cB):= \square_R(\cA^\op \otimes \cB)$. As mentioned above, we have $H_\ast(\square_R(\cA,\cB))\simeq \pi_\ast(K(\cA^\op \otimes \cB)\wedge HR)$. Therefore, the complex of $R$-modules $\square_R(\cA,\cB)$ should be considered as an explicit model of the left-hand side of \eqref{eq:enrichement-R}. Under this model, the composition law corresponds to the bilinear pairings
\begin{eqnarray*}
\square_R(\cA,\cB) \otimes_R \square_R(\cB,\cC) \too \square_R(\cA,\cC) && \cA, \cB, \cC \in \dgcat_{\mathrm{sp}}(k)
\end{eqnarray*}
induced by the tensor product of bimodules. We obtain in this way an explicit differential graded enhancement\footnote{We are implicitly strictifying the tensor product of bimodules in order to make it strictly associative; see Garkusha-Panin \cite[\S3]{GP} for instance.} of the full subcategory of $\NMix(k;R)$ consisting of the smooth proper dg categories.
\subsection{Universal property}\label{sub:universal}
Let $\dgcat(k)^\infty$ be the symmetric monoidal $\infty$-category of dg categories. A functor $E:\dgcat(k)^\infty \to \mathrm{D}$, with values in a presentable stable $\infty$-category, is called an {\em additive invariant} if it inverts the Morita equivalences, preserves filtered colimits, and sends split short exact sequences of dg categories to cofiber sequences. When $E$ is moreover symmetric monoidal, we call it a {\em symmetric monoidal additive invariant}. As proved in \cite[Thm.~15.4]{Duke}\cite[Thm.~8.5]{CT}, the functor $U:\dgcat(k)^\infty \to \Mot(k)^\infty$ is the {\em universal additive invariant}, \ie given any (symmetric monoidal) presentable stable $\infty$-category $\mathrm{D}$, we have equivalences
\begin{eqnarray*}
U^\ast: \Fun^L(\Mot(k)^\infty, \mathrm{D})  & \stackrel{\simeq}{\too} &  \Fun^{\add}(\dgcat(k)^\infty, \mathrm{D})\\
U^\ast: \Fun^{L,\otimes}(\Mot(k)^\infty, \mathrm{D}) & \stackrel{\simeq}{\too} & \Fun^{\add, \otimes}(\dgcat(k)^\infty, \mathrm{D})\,,
\end{eqnarray*}
where the left-hand side denotes the $\infty$-category of colimit preserving (symmetric monoidal) functors and the right-hand side the $\infty$-category of (symmetric monoidal) additive invariants. Note that in the same way the presentable $\infty$-category $\Mot(k)^\infty$ is {\em stable}, \ie it is a module over $\Spt^\infty$ in the sense of \cite[\S4]{Lurie2}, the presentable $\infty$-category $\Mot(k;R)^\infty$ is a {\em $R$-module}, \ie it is a module over $\mathrm{Mod}(HR)^\infty$. Hence, the universal property of the functor $U_R: \dgcat(k)^\infty \to \Mot(k;R)^\infty$ is the following: given any (symmetric monoidal) presentable $R$-module $\infty$-category $\mathrm{D}$, we have
\begin{eqnarray}
U_R^\ast: \Fun^L_R(\Mot(k;R)^\infty, \mathrm{D}) & \stackrel{\simeq}{\too} & \Fun^{\add}(\dgcat(k)^\infty, \mathrm{D})\label{eq:equivalence-univ0} \\
U_R^\ast: \Fun^{L,\otimes}_R(\Mot(k;R)^\infty, \mathrm{D}) & \stackrel{\simeq}{\too} & \Fun^{\add,\otimes}(\dgcat(k)^\infty, \mathrm{D})\,, \label{eq:equivalence-univ}
\end{eqnarray}
where the left-hand side denotes the $\infty$-category of colimit preserving (symmetric monoidal) $R$-module functors and the right-hand side the $\infty$-category of (symmetric monoidal) additive invariants.
\subsection{Base-change}\label{sub:base-change}
Given a field extension $l/k$, consider the following colimit preserving symmetric monoidal base-change functor 
\begin{eqnarray}\label{eq:base-change}
-\otimes_k l: \dgcat(k) \too \dgcat(l) && \cA \mapsto \cA \otimes_k l\,.
\end{eqnarray}
As proved in \cite[Prop.~7.1]{Artin}, the above functor \eqref{eq:base-change} preserves Morita equivalences. Thanks to the work of Drinfeld \cite[Prop.~1.6.3]{Drinfeld}, it also preserves (split) short exact sequences of dg categories. As mentioned in Remark \ref{rk:dualizable}, the smooth proper dg categories can be characterized as the strongly dualizable object of $\Hmo(k)$ (and $\Hmo(l)$). Therefore, since the above functor \eqref{eq:base-change} is symmetric monoidal and preserves Morita equivalences, we conclude that it preserves also the smooth proper dg categories. Making use of the above equivalence of categories \eqref{eq:equivalence-univ}, we hence obtain an $R$-linear symmetric monoidal triangulated functor (which we still denote by $-\otimes_k l$) making the following diagram commute:
\begin{equation}\label{eq:diagram-base-change}
\xymatrix{
\dgcat_{\mathrm{sp}}(k) \ar[d]_-{U_R} \ar[rr]^-{-\otimes_k l} && \dgcat_{\mathrm{sp}}(l) \ar[d]^-{U_R} \\
\NMix(k;R)^\oplus \ar[rr]_-{-\otimes_k l} && \NMix(l;R)^\oplus\,.
}
\end{equation}
\subsection{Relation with noncommutative Chow motives}\label{sub:relation}
There is a bijection $\Hom_{\Hmo(k)}(\cA,\cB)\simeq \mathrm{Iso}\,\rep(\cA,\cB)$ under which the composition law of $\Hmo(k)$ corresponds to the tensor product of bimodules; see \cite[Cor.~5.10]{IMRN}. Since the $\cA\text{-}\cB$-bimodules \eqref{eq:bimodules111} belong to
$\rep(\cA,\cB)$, we obtain the symmetric monoidal functor
\begin{eqnarray}\label{eq:functor1}
\dgcat(k) \too \Hmo(k) & \cA \mapsto \cA & F \mapsto {}_F\mathrm{B}\,.
\end{eqnarray}
Let $\Hmo_0(k)_R$ be the $R$-linear additive category with the same objects as $\Hmo(k)$ and morphisms $\Hom_{\Hmo_0(k)_R}(\cA,\cB):=K_0(\rep(\cA,\cB))_R$, where $K_0\rep(\cA,\cB)$ stands for the Grothendieck group of the triangulated category $\rep(\cA,\cB)$. The composition law is induced by the tensor product of bimodules and the symmetric monoidal structure extends by $R$-bilinearity from $\Hmo(k)$ to $\Hom_0(k)_R$. We have also the following symmetric monoidal functor
\begin{eqnarray}\label{eq:nat2}
\Hmo(k) \too \Hmo_0(k)_R &\cA \mapsto \cA& \mathrm{B} \mapsto [\mathrm{B}]_R\,.
\end{eqnarray}
Let us denote by $\mathrm{U}_R: \dgcat(k) \to \Hmo_0(k)_R$ the composition of $\eqref{eq:nat2} \circ \eqref{eq:functor1}$. Recall from the surveys \cite{survey2, survey} that the category of {\em noncommutative Chow motives $\mathrm{NChow}(k)_R$} is defined as the idempotent completion of the full subcategory of $\Hmo_0(k)_R$ consisting of the objects $\{\mathrm{U}_R(\cA)\,|\, \cA \in \dgcat_{\mathrm{sp}}(k)\}$.
\begin{proposition}\label{prop:base-change}
We have an $R$-linear additive symmetric monoidal functor
\begin{eqnarray}\label{eq:pure-mixed}
\mathrm{NChow}(k)_R \too \NMix(k;R)^\oplus && \mathrm{U}_R(\cA) \mapsto U_R(\cA)\,.
\end{eqnarray}
\end{proposition}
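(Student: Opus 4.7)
My plan is to use the concrete computation of hom-sets in $\NMix(k;R)^\oplus$ supplied by Proposition \ref{prop:computation} to match them with the hom-sets defining $\mathrm{NChow}(k)_R$, and then extend by idempotent completion.

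First, since $U_R: \dgcat(k) \to \NMix(k;R)^\oplus$ is by construction a symmetric monoidal additive invariant (see \S\ref{sub:universal}), it inverts Morita equivalences and therefore descends to a symmetric monoidal functor $\overline{U}_R : \Hmo(k) \to \NMix(k;R)^\oplus$. Via the bijection $\Hom_{\Hmo(k)}(\cA,\cB) \simeq \mathrm{Iso}\,\rep(\cA,\cB)$ recalled at the start of \S\ref{sub:relation}, this produces, for all $\cA, \cB \in \dgcat_{\mathrm{sp}}(k)$, a set-theoretic map $\mathrm{Iso}\,\rep(\cA,\cB) \to \Hom_{\NMix(k;R)^\oplus}(U_R(\cA), U_R(\cB))$ sending a bimodule class to the corresponding motivic morphism.

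To upgrade this to an $R$-linear map out of $\Hom_{\mathrm{Hmo}_0(k)_R}(\mathrm{U}_R(\cA),\mathrm{U}_R(\cB)) = K_0(\rep(\cA,\cB))_R$, I would invoke the enrichment isomorphism of Proposition \ref{prop:computation},
$$\uHom_{\cD(R)}(U_R(\cA), U_R(\cB)) \simeq K(\cA^\op \otimes \cB) \wedge HR.$$
Since the Waldhausen $K$-theory spectrum is connective, taking $\pi_0$ yields
$$\Hom_{\NMix(k;R)^\oplus}(U_R(\cA), U_R(\cB)) \simeq K_0(\cA^\op \otimes \cB) \otimes_\bbZ R \simeq K_0(\rep(\cA,\cB))_R,$$
where the last identification uses $\rep(\cA,\cB) = \cD_c(\cA^\op \otimes \cB)$ for $\cA$ smooth proper. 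Compatibility with composition follows from the remark after Proposition \ref{prop:computation}, which states that composition in $\Ho(\Mot(k;R))$ is induced by the tensor product of bimodules, matching the composition rule defining $\mathrm{Hmo}_0(k)_R$. The symmetric monoidal compatibility is inherited from $U_R$, since both tensor structures reduce to the tensor product of dg categories, and $R$-linearity is built in. This produces a well-defined $R$-linear additive symmetric monoidal functor from the full subcategory of $\mathrm{Hmo}_0(k)_R$ spanned by smooth proper dg categories to $\NMix(k;R)^\oplus$. Because $\NMix(k;R)^\oplus$ is a triangulated category with arbitrary direct sums, it is idempotent complete by Neeman's theorem, and hence the functor extends uniquely to the Karoubi envelope $\mathrm{NChow}(k)_R$, sending $\mathrm{U}_R(\cA) \mapsto U_R(\cA)$ as desired.

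The main obstacle I anticipate is verifying that the $R$-linear map on hom-sets extracted from the enrichment isomorphism does restrict, along $\mathrm{Iso}\,\rep(\cA,\cB) \to K_0(\rep(\cA,\cB))_R$, to the naive map $\mathrm{B} \mapsto \overline{U}_R(\mathrm{B})$ coming from the $\Hmo$-factorization. This is a compatibility between two a priori distinct descriptions of morphisms in $\NMix(k;R)^\oplus$, one via the Waldhausen $K$-theoretic model of the mapping spectrum and one via bimodules, and it is precisely what guarantees that the assignment on morphisms is an honest $R$-linear extension rather than an unrelated rule.
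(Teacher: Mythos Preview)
Your approach differs from the paper's. The paper does not construct the functor by matching hom-sets; instead it invokes directly the universal property of $\mathrm{U}_R:\dgcat(k)\to\Hmo_0(k)_R$ established in \cite[Thms.~5.3 and 6.3]{IMRN}: since $\Ho(\Mot(k;R))$ is additive and the composite $\dgcat(k)\stackrel{U_R}{\to}\Mot(k;R)\to\Ho(\Mot(k;R))$ is symmetric monoidal, inverts Morita equivalences, and sends split short exact sequences to direct sums, it factors uniquely through $\mathrm{U}_R$ as an $R$-linear additive symmetric monoidal functor. This one-line argument completely sidesteps the compatibility issue you flag at the end.

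Your route via Proposition~\ref{prop:computation} is essentially what the paper reserves for the \emph{full-faithfulness} addendum (stated after the proposition, for $R$ a $\bbQ$-algebra), not for the construction itself. Using the hom-set computation to \emph{build} the functor forces you to check that the map $\mathrm{Iso}\,\rep(\cA,\cB)\to\Hom_{\NMix(k;R)^\oplus}(U_R(\cA),U_R(\cB))$ coming from the $\Hmo$-factorization descends through the $K_0$-relations, i.e., through \emph{all} distinguished triangles in $\rep(\cA,\cB)$ and not merely split ones. That is exactly the obstacle you name, and it is not automatic from the additivity of the target alone; unwinding it amounts to reproving the relevant part of the universal property. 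So the paper's approach buys you the functor for free, while yours, if the compatibility is verified, would yield full-faithfulness as a byproduct, at the cost of a genuinely nontrivial check.
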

When $R$ is a $\bbQ$-algebra, the above functor \eqref{eq:pure-mixed} is moreover fully-faithful. 
\begin{proof}
Consider the following composition 
\begin{equation*}\label{eq:composition-new}
\dgcat(k) \stackrel{U_R}{\too} \Mot(k;R) \too \Ho(\Mot(k;R))\,.
\end{equation*}
Since the category $\Ho(\Mot(k;R))$ is additive and the above composition inverts Morita equivalences, sends split short exact sequences of dg categories to direct sums, and is moreover symmetric monoidal, \cite[Thms.~5.3 and 6.3]{IMRN} furnishes us a (unique) $R$-linear additive symmetric monoidal functor $\mathrm{NChow}(k)_R \to \NMix(k;R)^\oplus$ whose pre-composition with $\mathrm{U}_R$ agrees with $U_R$. This proves the first claim. The second claim follows from the combination of Proposition \ref{prop:computation} with the explicit construction of the category $\Hmo_0(k)_R$.
\end{proof}
\subsection{Relation with noncommutative Artin motives}\label{sub:rel-NCArtin}
As explained in \cite[Thm.~1.2]{Artin}, the category of noncommutative Artin motives $\NAM(k)_R$ can be identified with the smallest additive idempotent complete full subcategory of $\mathrm{NChow}(k)_R$ containing the objects $\mathrm{U}_R(l)$, where $l/k$ is a finite separable field extension. Therefore, making use of the above Proposition \ref{prop:base-change}, we obtain the following result:
\begin{proposition}\label{prop:rel-Artin}
When $R$ is a $\bbQ$-algebra, the category of noncommutative Artin motives $\NAM(k)_R$ identifies with the smallest additive idempotent complete full subcategory of $\NMAM(k;R)$ containing the objects $U_R(l)$, where $l/k$ is a finite separable field extension.
\end{proposition}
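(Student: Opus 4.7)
The plan is to deduce the statement from Proposition \ref{prop:base-change} combined with the characterization of $\NAM(k)_R$ inside $\mathrm{NChow}(k)_R$ recalled at the beginning of \S\ref{sub:rel-NCArtin} (from \cite[Thm.~1.2]{Artin}). Concretely, under the hypothesis that $R$ is a $\bbQ$-algebra, the $R$-linear additive symmetric monoidal functor
$$ F: \mathrm{NChow}(k)_R \too \NMix(k;R)^\oplus \ko \mathrm{U}_R(\cA) \mapsto U_R(\cA) $$
is fully faithful (this is precisely the second claim in Proposition \ref{prop:base-change}). Since $F$ sends $\mathrm{U}_R(l)$ to $U_R(l)$ for every finite separable extension $l/k$, the restriction of $F$ to $\NAM(k)_R$ is still fully faithful and maps the generating objects to the generating objects.

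Next I would verify that the essential image of this restriction lies inside $\NMAM(k;R)$. By Definition \ref{def:NMAM}, $\NMAM(k;R)$ is a thick triangulated subcategory of $\NMix(k;R)$ containing all the $U_R(l)$; in particular it is closed under finite direct sums (triangulated) and under direct summands (thick). Hence the smallest additive idempotent complete subcategory of $\NMix(k;R)^\oplus$ containing $\{U_R(l)\}$ is automatically contained in $\NMAM(k;R)$. Combined with the characterization of $\NAM(k)_R$ from \cite[Thm.~1.2]{Artin}, this shows that $F(\NAM(k)_R)$ is contained in $\NMAM(k;R)$.

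Finally, let $\cC \subset \NMAM(k;R)$ denote the smallest additive idempotent complete full subcategory containing the objects $U_R(l)$. By construction $F(\NAM(k)_R) \subseteq \cC$, and conversely $\cC$ is contained in any additive idempotent complete full subcategory of $\NMix(k;R)^\oplus$ containing $\{U_R(l)\}$, in particular in $F(\NAM(k)_R)$. We therefore obtain $F(\NAM(k)_R) = \cC$ as full subcategories of $\NMix(k;R)^\oplus$, and since $F$ is fully faithful this yields the desired equivalence $\NAM(k)_R \simeq \cC$.

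The argument is essentially formal once Proposition \ref{prop:base-change} is in hand; the only mildly subtle point is the compatibility between "smallest additive idempotent complete subcategory of $\NMAM(k;R)$" and "smallest additive idempotent complete subcategory of the ambient $\NMix(k;R)^\oplus$", which is resolved by the thickness of $\NMAM(k;R) \subset \NMix(k;R)$. I do not foresee a substantial obstacle.
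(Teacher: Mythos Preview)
Your proposal is correct and follows precisely the route the paper takes: the paper derives Proposition~\ref{prop:rel-Artin} as an immediate consequence of Proposition~\ref{prop:base-change} together with the identification of $\NAM(k)_R$ inside $\mathrm{NChow}(k)_R$ from \cite[Thm.~1.2]{Artin}, without spelling out the formal details you have supplied. Your verification that the two notions of ``smallest additive idempotent complete subcategory'' agree (via thickness of $\NMAM(k;R)$) is exactly the implicit step the paper is taking for granted.
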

\subsection{Some variants}\label{rk:variants}

The category $\Mot(k)$ admits a localizing variant $\Mot_L(k)$ and an $\bbA^1$-homotopy variant $\Mot_{\bbA^1}(k)$; consult \cite{Duke,A1-homotopy} for details. The constructions and results of \S\ref{sub:coefficients}-\ref{sub:relation}, suitably modified, also hold for these variants. Consequently, we obtain triangulated categories $\NMix_L(k;R)$ and $\NMix_{\bbA^1}(k;R)$.
\section{Proof of Theorem \ref{thm:main0}}\label{sec:proof-thm0}
Let us denote by $\mathrm{Sm}(k)$ the category of smooth $k$-schemes.\begin{theorem}\label{thm:comparison-1}
When $R$ is a $\bbQ$-algebra, there exists an $R$-linear symmetric monoidal triangulated functor $\Psi$ making the following diagram commute: 
\begin{equation}\label{eq:diagram-key}
\xymatrix{
\mathrm{Sm}(k)\ar[dd]_-{M_R} \ar[rrr]^-{X \mapsto \perf_\dg(X)} &&& \dgcat(k) \ar[d]^-{U_R} \\
&&& \Ho(\Mot(k;R)) \ar[d]^-{(-)^\vee} \\
\DM(k;R) \ar[rrr]_-{\Psi} &&& \NMix(k;R)^\oplus \subset \Ho(\Mot(k;R))  \,.
}
\end{equation}
The functor $\Psi$ preserves moreover arbitrary direct sums.
\end{theorem}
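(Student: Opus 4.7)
The plan is to invoke the universal property of Voevodsky's $\DM(k;R)$ as a presentable stable $R$-linear symmetric monoidal $\infty$-category built from $\mathrm{Sm}(k)$ by imposing Nisnevich descent, $\bbA^1$-homotopy invariance, Tate-twist invertibility, and (rationally) compatibility with correspondences. Concretely, one considers the composition
$$F: \mathrm{Sm}(k) \xrightarrow{X \mapsto \perf_\dg(X)} \dgcat(k) \xrightarrow{U_R} \Ho(\Mot(k;R)) \xrightarrow{(-)^\vee} \Ho(\Mot(k;R)),$$
and the goal is to factor $F$ through $M_R$ via $\Psi$.

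First, I would check that $F$ lands in $\NMix(k;R)^\oplus$ and is symmetric monoidal. For smooth projective $X$ this is immediate: $\perf_\dg(X) \in \dgcat_{\mathrm{sp}}(k)$ is Morita equivalent to its opposite, hence $U_R(\perf_\dg(X))^\vee \simeq U_R(\perf_\dg(X))$ by Remark \ref{rk:dualizable}, which lies in $\NMix(k;R)$. For a general smooth $X$, one reduces to the smooth proper case via Nagata--Hironaka compactifications together with Nisnevich descent, so the image still lies in the direct-sum closure $\NMix(k;R)^\oplus$. Symmetric monoidality follows from the Morita equivalence $\perf_\dg(X \times Y) \simeq \perf_\dg(X) \otimes \perf_\dg(Y)$ combined with the symmetric monoidality of $U_R$ (Definition \ref{def:model-R}) and of duality on strongly dualizable objects.

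Next, I would verify the three conditions encoded in the definition of $\DM(k;R)$. For Nisnevich descent, the Thomason--Trobaugh localization theorem yields, for each Nisnevich distinguished square, a split short exact sequence of dg categories of perfect complexes; since $U_R$ is the universal symmetric monoidal additive invariant (\S\ref{sub:universal}, equation \eqref{eq:equivalence-univ}), such sequences are sent to cofiber sequences in $\Mot(k;R)$, giving the required Mayer--Vietoris property. For $\bbA^1$-invariance, Quillen's homotopy invariance of algebraic $K$-theory for regular schemes gives $K(X) \simeq K(X \times \bbA^1)$ for smooth $X$; combined with the computation \eqref{eq:enrichement-R} of Proposition \ref{prop:computation} and the $\bbQ$-algebra hypothesis on $R$, this promotes to an equivalence $U_R(\perf_\dg(X)) \simeq U_R(\perf_\dg(X \times \bbA^1))$. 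Tate-twist invertibility follows from the $\bbP^1$-bundle formula $\perf_\dg(\bbP^1) \simeq \perf_\dg(k) \oplus \perf_\dg(k)$ in $\Hmo(k)$. The compatibility with correspondences intrinsic to $\DM(k;R)$ is handled by the $\bbQ$-linearity hypothesis, which allows one to work with $\DM(k;R)$ via a universal property not requiring explicit transfers (e.g.\ via Morel's identification with the plus-part of $\SHk_R$ for $R$ a $\bbQ$-algebra).

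Assembling these verifications, the universal property of $\DM(k;R)$ furnishes a unique colimit-preserving $R$-linear symmetric monoidal triangulated functor $\Psi: \DM(k;R) \to \NMix(k;R)^\oplus$ whose composition with $M_R$ agrees with $F$, and preservation of arbitrary direct sums is built into the construction. The main obstacle I foresee is the $\bbA^1$-invariance step: the ambient category $\Ho(\Mot(k;R))$ is not itself $\bbA^1$-invariant (a separate variant $\Mot_{\bbA^1}$ is mentioned in \S\ref{rk:variants}), so establishing $\bbA^1$-invariance of $F$ requires carefully combining Quillen's theorem with the $\bbQ$-algebra hypothesis, and ensuring compatibility with the coherent $\infty$-categorical universal property of $\DM(k;R)$ rather than merely with isomorphisms on Hom-groups.
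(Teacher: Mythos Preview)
Your approach differs substantially from the paper's, and the obstacle you flag at the end is a genuine gap that your sketch does not close. The paper does not attempt to verify the defining relations of $\DM(k;R)$ (Nisnevich descent, $\bbA^1$-invariance, transfers) directly on the noncommutative side. Instead it passes through Ayoub's ${\bf DA}(k;R)$ and then through $\mathrm{KGL}_R$-modules: the functor $\Sigma^\infty(-_+)_R:\mathrm{Sm}(k)\to{\bf DA}(k;R)$ already has Nisnevich descent and $\bbA^1$-invariance built in, and the substantive comparison is an earlier result \cite[Cor.~2.5(ii)]{Tabuada-Voevodsky} supplying a fully faithful symmetric monoidal functor $\Phi:\Ho(\mathrm{Mod}(\mathrm{KGL}_R))\to\NMix(k;R)^\oplus$. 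The functor $\Psi$ is then obtained as the composite of the base-change $\DM(k;R)\simeq\Ho(\mathrm{Mod}(\mathrm{HZ}_R))\to\Ho(\mathrm{Mod}(\mathrm{KGL}_R))$ (using the rational splitting $\mathrm{KGL}_R\simeq\bigoplus_i\mathrm{HZ}_R(i)[2i]$ of Bloch--Riou and the identification of $\DM(k;R)$ with $\mathrm{HZ}_R$-modules due to R{\"o}ndigs--{\O}stv{\ae}r) with $\Phi$.

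Two of your verification steps break down as written. First, Thomason--Trobaugh produces \emph{localization} sequences for Nisnevich squares, not split ones; since $U_R$ is only the universal \emph{additive} invariant (see \S\ref{sub:universal}), it is not clear it sends these to cofiber sequences in $\Ho(\Mot(k;R))$. Second, Quillen's theorem together with \eqref{eq:enrichement-R} yields at most that $\Hom(U_R(\cA),U_R(\perf_\dg(X)))\simeq\Hom(U_R(\cA),U_R(\perf_\dg(X\times\bbA^1)))$ for smooth proper $\cA$; but such $U_R(\cA)$ generate only $\NMix(k;R)^\oplus$, not all of $\Ho(\Mot(k;R))$. You have not established that $F(X)=U_R(\perf_\dg(X))^\vee$ lies in $\NMix(k;R)^\oplus$ for non-proper $X$ --- your Nagata--Hironaka reduction presupposes the very Nisnevich descent that is in question, and in any case needs characteristic zero --- so mapping from smooth proper generators does not suffice to detect the desired equivalence. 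The detour through $\mathrm{KGL}_R$-modules is exactly how the paper sidesteps both difficulties.
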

\begin{remark}
Note that Theorem \ref{thm:main0} follows automatically from Theorem \ref{thm:comparison-1}. Simply restrict yourself to smooth projective $k$-schemes. 
\end{remark}

\begin{proof}
Let $\mathrm{SH}(k)$ be Morel-Voevodsky's stable $\bbA^1$-homotopy category of $(\bbP^1,\infty)$-spectra and ${\bf DA}(k;R)$ the coefficients variant introduced by Ayoub in \cite[\S4]{Ayoub-thesis-2} (see also \cite[\S2.1.1]{Ayoub1}). Recall from {\em loc. cit.} that these categories are related by a symmetric monoidal triangulated functor $(-)_R: \mathrm{SH}(k) \to {\bf DA}(k;R)$. Thanks to the work of R{\"o}dings-Sptizweck-{\O}stv{\ae}r \cite{RSO} and Gepner-Snaith \cite{GS}, the $E^\infty$-ring spectrum $\mathrm{KGL} \in \mathrm{SH}(k)$ which represents homotopy $K$-theory admits a strictly commutative model. Therefore, it gives rise to the symmetric monoidal Quillen model category $\mathrm{Mod}(\mathrm{KGL}_R)$ of $\mathrm{KGL}_R$-modules. The proof of \cite[Cor.~2.5(ii)]{Tabuada-Voevodsky}, with $\mathrm{SH}(k)$ replaced by ${\bf DA}(k;R)$, $\mathrm{KGL}_\bbQ$ replaced by $\mathrm{KGL}_R$, and $\Ho(\Mot(k))_\bbQ$ replaced by $\Ho(\Mot(k;R))$, shows that there exists an $R$-linear fully-faithful symmetric monoidal triangulated functor $\Phi$ making the following diagram commute:
\begin{equation}\label{eq:diagram-1}
\xymatrix{
\mathrm{Sm}(k) \ar[d]_-{\Sigma^\infty(-_+)_R} \ar[rrr]^-{X \mapsto \perf_\dg(X)} &&& \dgcat(k) \ar[d]^-{U_R} \\
{\bf DA}(k;R) \ar[d]_-{-\wedge \mathrm{KGL}_R} &&& \Ho(\Mot(k;R)) \ar[d]^-{(-)^\vee} \\
\Ho(\mathrm{Mod}(\mathrm{KGL}_R)) \ar[rrr]_-{\Phi} &&& \NMix(k;R)^\oplus \subset \Ho(\Mot(k;R))  \,.
}
\end{equation}
The functor $\Phi$ preserves moreover arbitrary direct sums. Let $\mathrm{HZ} \in \mathrm{SH}(k)$ be the $E^\infty$-ring spectrum which represents motivic cohomology. Thanks to the work of Bloch \cite{Bloch}, reformulated by Riou in \cite[\S6]{Riou}, we have $\mathrm{KGL}_R \simeq \bigoplus_{i \in \bbZ} \mathrm{HZ}_R(i)[2i]$. On the other hand, thanks to the work of R{\"o}dings-{\O}stv{\ae}r \cite{RO}, $\mathrm{DM}(k;R)$ identifies with the homotopy category $\Ho(\mathrm{Mod}(\mathrm{HZ}_R))$. As a consequence, base-change along $\mathrm{HZ}_R \to \mathrm{KGL}_R$ gives rise to an $R$-linear symmetric monoidal triangulated base-change functor $-\wedge_{\mathrm{HZ}_R}\mathrm{KGL}_R$ making the following diagram~commute:
\begin{eqnarray}\label{eq:diagram-2}
\xymatrix{
 \mathrm{Sm}(k) \ar[dd]_-{M_R} \ar@{=}[rr] && \mathrm{Sm}(k) \ar[d]^-{\Sigma^\infty(-_+)_R} \\
& & {\bf DA}(k;R) \ar[d]^-{-\wedge \mathrm{KGL}_R} \\
\DM(k;R) \ar[rr]_-{-\wedge_{\mathrm{HZ}_R}\mathrm{KGL}_R} & & \Ho(\mathrm{Mod}(\mathrm{KGL}_R))\,.
}
\end{eqnarray}
The base-change functor $-\wedge_{\mathrm{HZ}_R}\mathrm{KGL}_R$ preserves arbitrary direct sums. Therefore, the desired functor $\Psi$ (resp. diagram \eqref{eq:diagram-key}) is obtained by composing (resp. concatenating) the base-change functor with $\Phi$ (resp. \eqref{eq:diagram-2} with \eqref{eq:diagram-1}).
\end{proof}
Let us denote by $R(1)[2] \in \DM_{\mathrm{gm}}(k;R)\subset \DM(k;R)$ the Tate motive.
\begin{theorem}\label{thm:orbit}
The functor $\Psi$ gives rise to an $R$-linear symmetric monoidal functor
\begin{equation}\label{eq:induced-arbitrary}
\overline{\Psi}: \DM(k;R)/_{\!\!-\otimes R(1)[2]} \too \NMix(k;R)^\oplus\,.
\end{equation}
Moreover, the restriction of \eqref{eq:induced-arbitrary} to $\DM_{\mathrm{gm}}(k;R)/_{\!\!-\otimes R(1)[2]}$ is fully faithful.
\end{theorem}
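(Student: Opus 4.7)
The plan is to first observe that $\Psi$ inverts the Tate twist up to a natural isomorphism, then invoke the $2$-universal property of the orbit category recalled in \S\ref{sub:orbit} to obtain $\overline{\Psi}$, and finally check fully faithfulness on the geometric part by combining adjunction for $\mathrm{HZ}_R\to\mathrm{KGL}_R$ with the Bott decomposition.

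For the factorization, recall from the proof of Theorem \ref{thm:comparison-1} that $\Psi=\Phi\circ(-\wedge_{\mathrm{HZ}_R}\mathrm{KGL}_R)$ and that Bott periodicity supplies a natural isomorphism $\mathrm{KGL}_R(1)[2]\simeq \mathrm{KGL}_R$ in $\Ho(\mathrm{Mod}(\mathrm{KGL}_R))$. For any $M\in\mathrm{DM}(k;R)$ this yields
\[
(M(1)[2])\wedge_{\mathrm{HZ}_R}\mathrm{KGL}_R\;\simeq\; M\wedge_{\mathrm{HZ}_R}\mathrm{KGL}_R(1)[2]\;\simeq\; M\wedge_{\mathrm{HZ}_R}\mathrm{KGL}_R,
\]
and composing with $\Phi$ produces a natural $2$-isomorphism $\alpha\colon\Psi\circ(-\otimes R(1)[2])\stackrel{\sim}{\Rightarrow}\Psi$. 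Since $-\otimes R(1)[2]$ is symmetric monoidal and $\alpha$ is induced by a map of $E^\infty$-algebras, $\alpha$ is moreover compatible with the monoidal structure. The $2$-universal property of the orbit category from \S\ref{sub:orbit} then promotes the pair $(\Psi,\alpha)$ to the desired $R$-linear symmetric monoidal functor $\overline{\Psi}$ of \eqref{eq:induced-arbitrary}.

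For the fully-faithfulness assertion, since $\Phi$ is fully faithful (as recorded in the proof of Theorem \ref{thm:comparison-1}), it is enough to show that the induced orbit-category functor
\[
\overline{-\wedge_{\mathrm{HZ}_R}\mathrm{KGL}_R}\colon\DM_{\mathrm{gm}}(k;R)/_{\!\!-\otimes R(1)[2]}\too \Ho(\mathrm{Mod}(\mathrm{KGL}_R))
\]
is fully faithful. Given $M,N\in\DM_{\mathrm{gm}}(k;R)$, adjunction along $\mathrm{HZ}_R\to\mathrm{KGL}_R$ identifies
\[
\Hom\bigl(M\wedge_{\mathrm{HZ}_R}\mathrm{KGL}_R,\,N\wedge_{\mathrm{HZ}_R}\mathrm{KGL}_R\bigr)\;\simeq\;\Hom_{\DM(k;R)}\bigl(M,\,N\wedge_{\mathrm{HZ}_R}\mathrm{KGL}_R\bigr).
\]
Using the splitting $\mathrm{KGL}_R\simeq\bigoplus_{i\in\bbZ}\mathrm{HZ}_R(i)[2i]$ as an $\mathrm{HZ}_R$-module together with compactness of $M$, which lets $\Hom_{\DM(k;R)}(M,-)$ commute with the direct sum, the right-hand side becomes $\bigoplus_{i\in\bbZ}\Hom_{\DM(k;R)}(M,N(i)[2i])$, which by definition is $\Hom$ in the orbit category.

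The main obstacle will be ensuring that the natural isomorphism $\alpha$, built out of the Bott element, is coherent with the symmetric monoidal structure so that $\overline{\Psi}$ is genuinely monoidal (and not merely additive); this is where we rely on the strictly commutative models of $\mathrm{KGL}_R$ and $\mathrm{HZ}_R$ provided by Gepner--Snaith \cite{GS} and R\"ondigs--Spitzweck--{\O}stv{\ae}r \cite{RSO}, which make the Bott self-map and the splitting $\mathrm{KGL}_R\simeq\bigoplus_{i\in\bbZ}\mathrm{HZ}_R(i)[2i]$ into honest equivalences of $E^\infty$-algebras (resp.\ of $\mathrm{HZ}_R$-modules), so that passing through the base-change and through $\Phi$ preserves all tensor-compatibilities.
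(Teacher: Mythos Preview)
Your argument is correct and is essentially the approach the paper intends: the paper's own proof is just a pointer to \cite[Thm.~2.8]{Tabuada-Voevodsky}, and what you have written is precisely the $R$-linear version of that argument---factor $\Psi$ through $\Ho(\mathrm{Mod}(\mathrm{KGL}_R))$, use Bott periodicity to trivialize the Tate twist and invoke the $2$-universal property of the orbit category, and then deduce fully faithfulness on geometric objects from the adjunction along $\mathrm{HZ}_R\to\mathrm{KGL}_R$ together with the splitting $\mathrm{KGL}_R\simeq\bigoplus_{i\in\bbZ}\mathrm{HZ}_R(i)[2i]$ and compactness.

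One small wording issue: saying ``$\alpha$ is induced by a map of $E^\infty$-algebras'' is not quite accurate---the Bott equivalence $\mathrm{KGL}_R(1)[2]\simeq\mathrm{KGL}_R$ is multiplication by an invertible element of the commutative ring spectrum $\mathrm{KGL}_R$, hence an isomorphism of $\mathrm{KGL}_R$-modules; the monoidal coherence of $\alpha$ then follows because multiplication by a central unit is compatible with the module tensor product. Your conclusion is right, only the justification should be phrased in terms of the Bott class being a unit in a commutative monoid rather than as a map of $E^\infty$-algebras.
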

\begin{proof}
The proof is similar to the one of \cite[Thm.~2.8]{Tabuada-Voevodsky}. Simply replace the categories $\mathrm{DM}(k)_\bbQ$ and $\NMix(k)_\bbQ$ by $\DM(k;R)$ and $\NMix(k;R)$, respectively.
\end{proof}
\section{Proof of Proposition \ref{prop:comparison}}
In order to simplify the exposition we will omit the underscripts of $\Hom$. It will be clear from the context which category we are considering. Let $X:=\mathrm{Spec}(l)$ and $Y:=\mathrm{Spec}(l')$ be two finite {\'e}tale $k$-schemes. We start by showing that the functor $\Psi:\mathrm{NAM}(k;R)\to \NMAM(k;R)$ induces isomorphisms
\begin{equation}\label{eq:induced-1}
\Hom(M_R(X),M_R(Y)[-n]) \stackrel{\sim}{\too} \Hom(\Psi(M_R(X)), \Psi(M_R(Y))[-n])\,.
\end{equation}
The left-hand side identifies with $CH^0(X\times Y)_R$ when $n=0$ and is zero otherwise. In what concerns the right-hand side, we have the following isomorphisms
\begin{eqnarray}
 &  & \simeq \Hom(U_R(\perf_\dg(X)), U_R(\perf_\dg(Y))[-n])\simeq \Hom(U_R(l),U_R(l')[-n]) \label{eq:2} \\
 &  & \simeq K_n(X\times Y)_R\simeq K_n(l\otimes_k l')\,, \label{eq:3}
\end{eqnarray}
where \eqref{eq:2} follows from Remark \ref{rk:last} and \eqref{eq:3} from \eqref{eq:R-mod-isos}. By construction of $\Psi$, the above morphism \eqref{eq:induced-1} with $n=0$ corresponds to the canonical isomorphism $CH^0(X \times Y)_R \simeq K_0(X\times Y)_R$. It remains then to show that the right-hand side of \eqref{eq:induced-1} vanishes when $n \neq 0$. Since by assumption $k$ is finite, the finite dimensional {\'e}tale $k$-algebra $l \otimes_k l'$ is also finite. Thanks to Kuku (see \cite[IV \S1 Prop.~1.16]{Weibel-book}), this implies that all the higher algebraic $K$-theory groups $K_n(l\otimes_k l') \simeq K_n(X \times Y), n \geq 1$, are finite and hence torsion. Using the fact that $R$ is a $\bbQ$-algebra, we therefore conclude that the right-hand side of \eqref{eq:induced-1} vanishes when $n \neq 0$.

The proof of Proposition \ref{prop:comparison} follows now from the above isomorphisms \eqref{eq:induced-1} and from the fact that the objects $M_R(\mathrm{Spec}(l'))[-n]$ (resp. $U_R(l')[-n]$), where $l'/k$ is a finite separable field extension and $n \in \bbZ$, form a set of compact generators of the triangulated category $\mathrm{MAM}(k;R)^\oplus$ (rest. $\NMAM(k;R)^\oplus$).
\section{Proof of Proposition \ref{prop:main}}
As explained in \cite[Example 8.9]{CT}, Hochschild homology gives rise to a symmetric monoidal additive invariant $HH^k: \dgcat(k)^\infty \to \cC(k)^\infty$. Since $R$ is a $k$-algebra, we can then consider the following composition
$$ HH^k_R: \dgcat(k)^\infty \stackrel{HH}{\too} \cC(k)^\infty \stackrel{-\otimes_k R}{\too} \cC(R)^\infty\,,$$
which is also a symmetric monoidal additive invariant. Making use of the equivalence of categories \eqref{eq:equivalence-univ}, we hence obtain an induced symmetric monoidal triangulated functor $HH^k_R: \Ho(\Mot(k;R)) \to \cD(R)$. Let us now show that its restriction 
\begin{equation}\label{eq:induced-HH}
HH^k_R: \NMix(k;R)^\oplus \too \cD(R)
\end{equation}
satisfies conditions (i)-(iii) of Ayoub's weak Tannakian formalism (see \cite[page~19]{Ayoub1}):
\begin{itemize}
\item[(i)] The functor \eqref{eq:induced-HH} admits a right adjoint $\underline{HH}^k_R$.
\item[(ii)] The functor \eqref{eq:induced-HH} admits a symmetric monoidal $2$-section $S$. Moreover, the symmetric monoidal triangulated functor $S$ admits a right adjoint.
\item[(iii)] For every $M \in \cD(R)$ and $N \in \NMix(k;R)^\oplus$ the coprojection morphism $\underline{HH}^k_R(M) \otimes N \to \underline{HH}^k_R (M \otimes HH^k_R(N))$ is invertible.
\end{itemize}
As explained in Remark \ref{rk:symmetric}, the triangulated category $\NMix(k;R)^\oplus$ is compactly generated. Therefore, the proof of item (i) follows from the combination of \cite[Thm.~8.4.4]{Neeman} with the fact that \eqref{eq:induced-HH} preserves arbitrary direct sums. In what concerns item (ii), the symmetric monoidal $2$-section $S$ is given by the composition of the equivalence $\cD(R) \simeq \Ho(\mathrm{Mod}(HR))$ with the following functor 
\begin{eqnarray*}
\Ho(\mathrm{Mod}(HR)) \to \NMix(k;R)^\oplus && M \mapsto (\cA \mapsto \Sigma^\infty(N(w\rep_\dg(\cA,k))_+)\wedge M_{\mathrm{cof}}) \,,
\end{eqnarray*}
where $M_{\mathrm{cof}}$ stands for the a (functorial) cofibrant resolution of $M$. Since $S$ preserves arbitrary direct sums and the category $\cD(R)$ is compactly generated, we conclude once again from \cite[Thm.~8.4.4]{Neeman} that the symmetric monoidal $2$-section $S$ admits a right adjoint. Finally, let us prove condition (iii). By construction, the bifunctors $-\otimes-$ of the categories $\cD(R)$ and $\NMix(k;R)^\oplus$ preserve arbitrary direct sums in each variable. Since the functor \eqref{eq:induced-HH} preserves arbitrary direct sums, we hence conclude from Lemma \ref{lem:sums} below that it suffices to show condition (iii) in the particular case where $N$ belongs to $\{U_R(\cA)\,|\, \cA \in \dgcat_{\mathrm{sp}}(k)\}$. Since these objects are strongly dualizable (see Remark \ref{rk:dualizable}) the proof follows now from \cite[Lem.~2.8]{Ayoub1}.
\begin{lemma}\label{lem:sums}
The right adjoint functor $\underline{HH}^k_R$ preserves arbitrary direct sums.
\end{lemma}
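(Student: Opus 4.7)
The plan is to exploit the standard duality between a left adjoint preserving compact objects and its right adjoint preserving arbitrary direct sums. Concretely, by Neeman's theorem (see \cite[Thm.~5.1]{Neeman} or \cite[Thm.~8.4.4]{Neeman}), given a triangulated functor $F:\cT \to \cT'$ between compactly generated triangulated categories with right adjoint $G$, the functor $G$ preserves arbitrary direct sums if and only if $F$ sends compact objects to compact objects. I would therefore reduce the lemma to the statement that $HH^k_R:\NMix(k;R)^\oplus \to \cD(R)$ preserves compactness.

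By Remark \ref{rk:symmetric}, the set $\{U_R(\cA)\,|\, \cA \in \dgcat_{\mathrm{sp}}(k)\}$ is a set of compact generators of $\NMix(k;R)^\oplus$; hence the subcategory of compact objects is precisely the thick subcategory they generate. Since $HH^k_R$ is triangulated, it suffices to verify that $HH^k_R(U_R(\cA))$ is compact in $\cD(R)$ for every smooth proper dg category $\cA$.

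By construction of $HH^k_R$ via the universal property \eqref{eq:equivalence-univ}, we have $HH^k_R(U_R(\cA)) \simeq HH^k(\cA)\otimes_k R$. The key input is that Hochschild homology sends smooth proper dg categories to perfect complexes of $k$-vector spaces: indeed, as recalled in Remark \ref{rk:dualizable}, a smooth proper $\cA$ is strongly dualizable in $\Hmo(k)$ with dual $\cA^\op$, and the symmetric monoidal additive invariant $HH^k:\dgcat(k)^\infty \to \cC(k)^\infty$ carries dualizable objects to dualizable objects of $\cD(k)$, \ie to perfect complexes. Tensoring with $R$ over $k$ then yields a perfect complex of $R$-modules, which is exactly the compactness condition in $\cD(R)$.

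The only delicate point is confirming that $HH^k$ does preserve strong duality; this is where I would be most careful. One clean way is to invoke the symmetric monoidality of $HH^k$ at the level of the $\infty$-categorical extension $HH^k:\Ho(\Mot(k))\to \cD(k)$ obtained from \eqref{eq:equivalence-univ}, under which $U(\cA)$ is strongly dualizable (by functoriality of $U$) and its image inherits a strong dual. Once compactness of $HH^k_R(U_R(\cA))$ is established, Neeman's criterion immediately yields that $\underline{HH}^k_R$ commutes with arbitrary direct sums, completing the proof.
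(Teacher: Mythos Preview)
Your proposal is correct and follows essentially the same approach as the paper: both argue that $HH^k_R$ preserves compact objects because it is symmetric monoidal and the compact generators $U_R(\cA)$ are strongly dualizable (hence map to dualizable, i.e.\ perfect, objects of $\cD(R)$), and then invoke the standard criterion (the paper cites \cite[Lem.~2.1.28]{Ayoub-thesis-1}, you cite Neeman) that a right adjoint preserves direct sums once the left adjoint preserves compacts. Your detour through $HH^k(\cA)\otimes_k R$ is unnecessary---$HH^k_R$ is itself symmetric monoidal, and symmetric monoidal functors automatically preserve strong duals, so there is nothing delicate here---but it does no harm.
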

\begin{proof}
The strongly dualizable objects and the compact object of the category $\cD(R)$ are the same. Therefore, since the functor \eqref{eq:induced-HH} is symmetric monoidal and the compact generators of $\NMix(k)_R$ are strongly dualizable, we conclude that \eqref{eq:induced-HH} also preserves compact objects. The proof follows now from \cite[Lem.~2.1.28]{Ayoub-thesis-1}.
\end{proof}
\section{Proof of Proposition \ref{prop:explicit}}
As explained in Remark \ref{rk:symmetric}, $\{U_R(\cA)\,|\, \cA \in \dgcat_{\mathrm{sp}}(k) \}$ is a set of compact generators of the triangulated category $\NMix(k;R)^\oplus$. Moreover, as explained in the proof of Lemma \ref{lem:sums}, the Hochschild homology functor preserves compact objects. Therefore, the proof follows from the application of \cite[Prop.~1.18 and Rk.~2.11]{Pridham} to Hochschild homology and to the differential graded enhancement described in~\S\ref{sub:cubes}.
\section{Proof of Theorem \ref{thm:main1}}
The following subsection, which will play a key role in the proof of Theorem \ref{thm:main1}, is of independent interest:
\subsection{Hopf dg algebras of orbit categories}\label{sub:orbit1}
Let $\cC$ be a triangulated symmetric monoidal category and $O \in \cC$ a $\otimes$-invertible object. Recall from \S\ref{sub:orbit} the construction of the orbit category $\cC/_{\!\!-\otimes O}$ and of the projection functor $\pi: \cC \to \cC/_{\!\!-\otimes O}$. Let $F: \cC \to \cC'$ and $\omega': \cC' \to \cD(R)$ be triangulated symmetric monoidal functors such that $\omega'$ and $\omega:= \omega' \circ F$ satisfy the conditions of Ayoub's weak Tannakian formalism. As usual, we will write $\cH(\cC')$ and $\cH(\cC)$ for the associated Hopf dg algebras.
\begin{proposition}\label{prop:orbit}
Assume the following:
\begin{itemize}
\item[(i)] The functor $\omega'$ preserves arbitrary direct sums.
\item[(ii)] The functor $F$ preserves arbitrary direct sums and admits a right adjoint $G$ which also preserves arbitrary direct sums.
\item[(iii)] The category $\cC$ admits a set $\cG$ of compact generators such that $F(\cG)$ is a set of (compact) generators of~$\cC'$.
\item[(iv)] The functor $F$ factors through $\pi$ and gives rise to isomorphisms 
\begin{eqnarray*}
\Hom_{\cC/_{\!\!-\otimes O}}(\pi(a),\pi(b)) \stackrel{\sim}{\too} \Hom_{\cC'}(F(a),F(b)) && a, b \in \cG\,.
\end{eqnarray*}
\end{itemize}
Under these assumptions, we obtain an isomorphism of Hopf dg algebras 
\begin{equation}\label{eq:isom-Laurent}
\cH(\cC') \simeq \cH(\cC) \otimes_{R[t,t^{-1}]}R\,,
\end{equation} 
where the $R[t,t^{-1}]$-linear structure is induced by the $\otimes$-invertible object $O$.
\end{proposition}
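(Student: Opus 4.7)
The plan is to build a canonical comparison morphism $\cH(\cC)\to\cH(\cC')$ via the adjunction $F\dashv G$, verify that it is $R[t,t^{-1}]$-linear and sends $t\mapsto 1$, and then exhibit a direct-sum decomposition $\cH(\cC)\simeq\bigoplus_{i\in\bbZ}\cH(\cC')$ coming from hypothesis (iv) that identifies the induced map as an isomorphism after tensoring with $R$.

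Since $\omega=\omega'\circ F$, the uniqueness of right adjoints gives $\underline{\omega}\simeq G\circ\underline{\omega'}$, hence $\cH(\cC)\simeq\omega'\bigl(FG\underline{\omega'}({\bf 1})\bigr)$ while $\cH(\cC')=\omega'\bigl(\underline{\omega'}({\bf 1})\bigr)$. The counit $\epsilon\colon FG\to\Id_{\cC'}$ then yields the canonical morphism $\cH(\cC)\to\cH(\cC')$, which is a morphism of Hopf dg algebras by naturality of Ayoub's construction. Separately, since $F$ factors through $\pi$ we have $F(O)\simeq F({\bf 1})\simeq{\bf 1}_{\cC'}$, and therefore $\omega(O)=\omega'(F(O))\simeq R$ as a symmetric-monoidal trivialization. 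This trivialization of the fiber of the $\otimes$-invertible $O$ is precisely the data that, in Ayoub's formalism, attaches a grouplike element to $O$ in $\cH(\cC)$, \ie a Hopf-algebra map $R[t,t^{-1}]\to\cH(\cC)$, and it is this map that encodes the $R[t,t^{-1}]$-structure appearing in the statement. In $\cC'$ the trivialization is tautological, so the composition $R[t,t^{-1}]\to\cH(\cC)\to\cH(\cC')$ is the augmentation $t\mapsto 1$; hence the comparison map factors through $\cH(\cC)\otimes_{R[t,t^{-1}]}R$.

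The key computation is the decomposition $GF\simeq\bigoplus_{i\in\bbZ}-\otimes O^i$ of endofunctors of $\cC$. For compact generators $a,b\in\cG$, assumption (iv) gives natural identifications
$$\Hom_{\cC}(a,GF(b))\simeq\Hom_{\cC'}(F(a),F(b))\simeq\Hom_{\cC/_{\!\!-\otimes O}}(\pi(a),\pi(b))=\bigoplus_{i\in\bbZ}\Hom_{\cC}(a,b\otimes O^i),$$
so by Yoneda and the compactness of $a$ the functors $GF$ and $\bigoplus_i-\otimes O^i$ agree on $\cG$. Both are triangulated and preserve arbitrary direct sums by (ii), so the compact generation of $\cC$ in (iii) promotes this to a natural isomorphism of endofunctors of $\cC$. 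Applying $F$ and using $F(O^i)\simeq{\bf 1}_{\cC'}$ together with the fact that $F$ preserves arbitrary direct sums, we obtain $FG\circ F\simeq\bigoplus_{i\in\bbZ}F$ on $\cG$; since $F(\cG)$ is a set of compact generators of $\cC'$ by (iii), the same extension argument gives $FG\simeq\bigoplus_{i\in\bbZ}\Id_{\cC'}$ on all of $\cC'$. Applying $\omega'$, which preserves arbitrary direct sums by (i), to $FG(\underline{\omega'}({\bf 1}))\simeq\bigoplus_i\underline{\omega'}({\bf 1})$ we conclude
$$\cH(\cC)\simeq\bigoplus_{i\in\bbZ}\cH(\cC')\simeq\cH(\cC')\otimes_R R[t,t^{-1}],$$
under which the $R[t,t^{-1}]$-action from the previous paragraph corresponds to the index-shift action on the right. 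Tensoring with $R$ over $R[t,t^{-1}]$ then yields $\cH(\cC)\otimes_{R[t,t^{-1}]}R\simeq\cH(\cC')$, and one checks that this isomorphism coincides with the comparison map of the first paragraph.

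The main obstacle is the coherence bookkeeping required to match all structures simultaneously: one must verify that the grouplike element attached to $O$ acts on $\bigoplus_i\cH(\cC')$ precisely by shifting the index, and that each step respects not only the algebra but the full Hopf dg algebra structure (comultiplication, counit, antipode). Both compatibilities are formal consequences of the naturality of Ayoub's formalism with respect to symmetric-monoidal adjoints and of the symmetric-monoidal nature of the trivializations $F(O^i)\simeq{\bf 1}_{\cC'}$, but unwinding the coherence data carefully is where the bulk of the technical work resides.
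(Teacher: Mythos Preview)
Your argument is correct and follows essentially the same route as the paper: both identify $\underline{\omega}\simeq G\circ\underline{\omega}'$, compute $GF(b)\simeq\bigoplus_{i\in\bbZ}b\otimes O^{\otimes i}$ on generators via assumption (iv) and the adjunction, deduce $FG\simeq\bigoplus_{i\in\bbZ}\Id_{\cC'}$, and then apply $\omega'$ (using (i)) to obtain $\cH(\cC)\simeq\cH(\cC')[t,t^{-1}]$. The paper packages the last step as a short exact sequence $R[t,t^{-1}]\to\cH(\cC')[t,t^{-1}]\to\cH(\cC')$ rather than invoking a grouplike element, and is terser about the Hopf-algebra compatibilities you flag in your final paragraph, but the substance is the same.
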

\begin{remark}
Intuitively speaking, the above assumptions (iii)-(iv) expresses $\cC'$ as the ``triangulated envelope'' of the orbit category $\cC/_{\!\!-\otimes O}$.
\end{remark}
\begin{proof}
Let $\underline{\omega}'$ (resp. $\underline{\omega}$) be the right adjoint of $\omega'$ (resp. $\omega$). Under these notations, we have the following isomorphisms
$$(\omega \circ \underline{\omega})(R) =  \omega(\underline{\omega}(R)) \simeq
\omega'((F\circ G)(\underline{\omega}'(R))) \simeq \omega'(\oplus_{i \in \bbZ} \underline{\omega}'(R))\simeq \oplus_{i \in \bbZ} (\omega' \circ \underline{\omega}')(R)\,,
$$
where the first one follows from the natural isomorphisms $\underline{\omega}\simeq G \circ \underline{\omega}'$ and $\omega \simeq \omega' \circ F$, the second one from Lemma \ref{lem:sums-last} below (with $c:=\underline{\omega}'(R)$), and the third one from assumption (i). Consequently, $\cH(\cC)$ identifies with $\cH(\cC')[t,t^{-1}]$ and the functor $F$ gives rise to the following short exact sequence of Hopf $R$-algebras
$$ R[t,t^{-1}] \too  \cH(\cC')[t,t^{-1}] \stackrel{t=0}{\too} \cH(\cC')\,,$$
where the first map is induced by the unit of $\cH(\cC')$. In particular, we obtain the above isomorphism \eqref{eq:isom-Laurent}.
\end{proof}
\begin{lemma}\label{lem:sums-last}
For every object $c \in \cC'$, we have an isomorphism $(F \circ G)(c) \simeq \oplus_{i \in \bbZ} c$. Moreover, if $c$ is a (commutative unital) algebra, then $(F \circ G)(c) \simeq c[t,t^{-1}]$.
\end{lemma}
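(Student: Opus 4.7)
The plan is to first compute $G \circ F$ on all of $\cC$ using the orbit-category hypothesis, and then read off $F \circ G$ by testing against the compact generators $F(\cG)$ of $\cC'$.

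First I would construct, for every $b \in \cC$, a natural morphism
$$\xi_b : \bigoplus_{i \in \bbZ} b \otimes O^{\otimes i} \too G(F(b))\,.$$
The factorization $F = \overline{F} \circ \pi$ from hypothesis~(iv), combined with the canonical $2$-isomorphism $\pi \circ (-\otimes O)\simeq \pi$ from \S\ref{sub:orbit}, yields natural isomorphisms $F(b \otimes O^{\otimes i}) \simeq F(b)$ whose adjoints are morphisms $b \otimes O^{\otimes i} \to G(F(b))$; these sum to $\xi_b$. To show $\xi_b$ is an isomorphism for $b \in \cG$, I apply $\Hom_\cC(a,-)$ for $a \in \cG$: compactness of $a$ rewrites the source as $\bigoplus_i \Hom_\cC(a, b \otimes O^{\otimes i}) = \Hom_{\cC/_{\!\!- \otimes O}}(\pi(a), \pi(b))$, and hypothesis~(iv) identifies this with $\Hom_{\cC'}(F(a), F(b)) \simeq \Hom_\cC(a, G(F(b)))$; a direct check confirms that the resulting bijection is precisely the one induced by $\xi_b$. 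Since both sides of $\xi_b$, viewed as functors of $b$, are triangulated and preserve arbitrary direct sums ($G$ is triangulated as a right adjoint of a triangulated functor and preserves coproducts by hypothesis~(ii)), and since $\cG$ compactly generates $\cC$ by hypothesis~(iii), the morphism $\xi_b$ is an isomorphism for every $b \in \cC$.

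Next, for any $c \in \cC'$ and any $a \in \cG$, the adjunction together with the case $b := G(c)$ of the preceding step give
$$\Hom_{\cC'}(F(a), (F \circ G)(c)) \simeq \Hom_\cC(a, GFG(c)) \simeq \bigoplus_{i \in \bbZ} \Hom_\cC(a, G(c)\otimes O^{\otimes i})\,.$$
Using $\otimes$-invertibility of $O$ together with $F(a \otimes O^{\otimes -i}) \simeq F(a)$, the right-hand side rewrites as $\bigoplus_i \Hom_{\cC'}(F(a),c)$, which by compactness of $F(a)$ is $\Hom_{\cC'}(F(a), \bigoplus_i c)$. Since $F(\cG)$ is a set of compact generators of $\cC'$, Yoneda yields the desired natural isomorphism $(F \circ G)(c) \simeq \bigoplus_{i \in \bbZ} c$. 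When $c$ is a commutative unital algebra, $G(c)$ inherits such a structure because $G$ is lax symmetric monoidal as the right adjoint of the symmetric monoidal functor $F$, so $(F \circ G)(c)$ is an algebra in $\cC'$; under the identification above, the summand indexed by $i$ is naturally a ``weight $i$'' piece with respect to $O$, and tracing the multiplication through the monoidal structure of $F$, the lax monoidal structure of $G$, and the isomorphisms $O^{\otimes i} \otimes O^{\otimes j} \simeq O^{\otimes i+j}$ identifies it with the Laurent polynomial multiplication on $c[t,t^{-1}]$.

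The main obstacle is the first step: producing $\xi$ and verifying that the induced identification on $\Hom$-groups really agrees with the isomorphism supplied by hypothesis~(iv). Once $G \circ F$ is pinned down on $\cC$, the computation of $F \circ G$ is a compact-generation/Yoneda formality, and the algebra refinement reduces to a routine unwinding of the tensor structures via the weight decomposition above.
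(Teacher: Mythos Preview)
Your proof is correct and uses the same core idea as the paper: compute $G\circ F$ via the orbit-category hypothesis~(iv), then read off $F\circ G$. The organization differs slightly. The paper first reduces to $c=F(b)$ with $b\in\cG$ (using that $F\circ G$ and $\bigoplus_i(-)$ are triangulated, coproduct-preserving, and $F(\cG)$ generates $\cC'$), then computes $G(F(b))\simeq\bigoplus_i b\otimes O^{\otimes i}$ exactly as you do, and simply applies $F$ to obtain $(F\circ G)(F(b))\simeq\bigoplus_i F(b)$ directly. You instead extend the identification of $G\circ F$ to all of $\cC$ and then pass through $GFG(c)$ for arbitrary $c$, which is a small detour: once you know $G(F(b))\simeq\bigoplus_i b\otimes O^{\otimes i}$ on generators, applying $F$ already finishes the job without needing the extension or the $GFG$ maneuver. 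Your more careful construction of the natural map $\xi_b$ and the verification that it matches hypothesis~(iv) is a point the paper leaves implicit.
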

\begin{proof}
Thanks to assumptions (ii)-(iii), it suffices to show that $(F\circ G)(F(b))$ identifies with $\oplus_{i \in \bbZ} F(b)$ for every $b \in \cG$. By combining assumptions (iii)-(iv) with the definition of the orbit category, we observe that 
$$ \Hom_\cC(a,\oplus_{i \in \bbZ} (b\otimes O^{\otimes i})) \simeq \Hom_{\cC/_{\!\!-\otimes O}}(\pi(a),\pi(b)) \simeq  \Hom_{\cC'}(F(a),F(b))$$
for every $a \in \cG$. This implies that $G(F(b)) \simeq \oplus_{i \in \bbZ} (b\otimes O^{\otimes i})$. Using the $2$-isomorphism $\pi\circ (-\otimes O) \stackrel{\sim}{\Rightarrow} \pi$, and the fact that $F$ preserves arbitrary direct sums and factors through $\pi$, we hence conclude that 
$$(F\circ G)(F(b)) \simeq F(\oplus_{i \in \bbZ}(b \otimes O^{\otimes i})) \simeq \oplus_{i \in \bbZ} (F(b) \otimes F(O)^{\otimes i}) \simeq \oplus_{i \in \bbZ} F(b)\,.$$
This achieves the proof of both claims.
\end{proof}
\subsection*{Proof of Theorem \ref{thm:main1}}
The proof of the first claim is similar to the proof of Proposition \ref{prop:main}; simply replace the category $\NMix(k;R)^\oplus$ by the categories $\NMAM(k;R)^\oplus$ and $\mathrm{MATM}(k;R)^\oplus$ and respective functors. In what concerns the second claim, it suffices to verify the assumptions (i)-(iv) of the above Proposition \ref{prop:orbit} with $\cC:=\mathrm{MATM}(k;R)^\oplus$, $O:=R(1)[2]$, $\cC':=\mathrm{NMAM}(k;R)^\oplus$, $F:=\Psi$, and $\omega:=HH^k_R$. Assumption (i) follows from the construction of the functor $HH^k_R$. The triangulated category $\mathrm{MATM}(k;R)^\oplus$ is compactly generated and all its compact objects are strongly dualizable. Therefore, assumption (ii)  follows from the combination of \cite[Thm.~8.4.4]{Neeman}\cite[Lem.~2.1.28]{Ayoub-thesis-1} with the fact that the functor $\Psi$ is symmetric monoidal and preserves arbitrary direct sums. In what concerns assumption (iii), take for $\cG$ the set of objects $M_R(\mathrm{Spec}(l))(n)[m]$, where $l/k$ is a finite separable field extension and $n,m \in \bbZ$. Note that the corresponding objects 
$$ \Psi(M_R(\mathrm{Spec}(l))(n)[m]) \simeq U_R(l)[m-2n]$$
form a set of compact generators of the triangulated category $\NMAM(k;R)^\oplus$. Finally, assumption (iv) follows from the above Theorem \ref{thm:orbit}.

\begin{remark}[Mixed Tate motives]\label{rk:Tate}
Let $\mathrm{MTM}(k;R)$ be the classical triangulated category of mixed Tate motives, $\mathrm{MTM}(k;R)^\oplus$ the smallest triangulated subcategory of $\DM(k;R)$ which contains  $\mathrm{MTM}(k;R)$ and is closed under arbitrary direct sums, and $\langle U_R(k)\rangle^\oplus$ the smallest triangulated subcategory of $\mathrm{NMix}(k;R)^\oplus$ which contains the $\otimes$-unit $U_R(k)$ and is closed under arbitrary direct sums. Similarly to Theorem \ref{thm:main1}, we have an isomorphism $\cH(\langle U_R(k)\rangle^\oplus)\simeq \cH(\mathrm{MTM}(k;R)^\oplus) \otimes_{R[t,t^{-1}]}R$.

%
\end{remark}
\begin{remark}
Let us denote by $\mathrm{NDM}(k;R)^\oplus$ the smallest triangulated subcategory of $\NMix(k;R)^\oplus$ which contains the objects $U_R(\perf_\dg(X))$, where $X \in \mathrm{SmProj}(k)$, and is closed under arbitrary direct sums. Similarly to Theorem \ref{thm:main1}, we have an isomorphism $ \cH(\mathrm{NDM}(k;R)^\oplus)\simeq \cH(\DM(k;R))\otimes_{R[t,t^{-1}]}R$. 
%
\end{remark}
\section{Proof of Theorem \ref{thm:new}}\label{sec:proof-new}
Recall from Voevodsky \cite[\S3.2]{Voev} that the classical triangulated category of mixed Artin motives $\MAM(k;R)$ is defined as the smallest thick triangulated subcategory of $\DM_{\mathrm{gm}}(k;R)$ containing the objects $M_R(\mathrm{Spec}(l))$, where $l/k$ is a finite separable field extension. As mentioned in \S\ref{sec:statements}, $\MAM(k;R)$ is equivalent to the bounded derived category $\cD^b(\AM(k)_R)$. Since $\AM(k)_R$ identifies with the category of finite dimensional $R$-linear continuous $\mathrm{Gal}(\overline{k}/k)$-representations $\mathrm{rep}_R(\mathrm{Gal}(\overline{k}/k))$, we hence conclude that $\MAM(k;R) \simeq \cD^b(\rep_R(\mathrm{Gal}(\overline{k}/k)))$.
\begin{notation}
Given a finite Galois field extension $\overline{k}/l/k$, let $\MAM(l/k;R)^\oplus$ be the smallest triangulated subcategory of $\DM(k;R)$ which  is closed under arbitrary direct sums and contains the objects $M_R(\mathrm{Spec}(l'))$, where $l/l'/k$ is an intermediate separable field extension. Note that $\MAM(k;R)^\oplus \simeq \bigcup_{\overline{k}/l/k} \MAM(l/k;R)^\oplus$. 
\end{notation}
The category $\MAM(k;R)^\oplus$ is equivalent to the (unbounded) derived category $\cD(\mathrm{Rep}_R(\mathrm{Gal}(\overline{k}/k)))$, where $\mathrm{Rep}_R(\mathrm{Gal}(\overline{k}/k))$ stands for the category of $R$-linear continuous $\mathrm{Gal}(\overline{k}/k)$-representations. In the same vein, $\MAM(l/k;R)^\oplus$ is equivalent to the (unbounded) derived category $\cD(\mathrm{Rep}_R(\mathrm{Gal}(l/k)))$.

In order to simplify the exposition, let us denote by $HH'^k_R$ the composed functor \eqref{eq:functor-Mix-Artin}. The same proof of Proposition \ref{prop:main} shows that the functor $HH'^k_R$, as well as its pre-composition with the inclusions $I_{l/k}: \MAM(l/k;R)^\oplus \subset \MAM(k;R)^\oplus$, satisfies the conditions of Ayoub's weak Tannakian formalism. Consequently, the inclusions $I_{l/k}$ give rise to an induced morphism of Hopf dg algebras (see \cite[Prop.~1.48]{Ayoub1}):
\begin{equation}\label{eq:induced1}
\mathrm{colim}_{\overline{k}/l/k} \cH(\MAM(l/k;R)^\oplus) \too \cH(\MAM(k;R)^\oplus)\,.
\end{equation}
\begin{proposition}\label{prop:Hopf1}
The above morphism \eqref{eq:induced1} is invertible.
\end{proposition}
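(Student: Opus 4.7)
The plan is to compute both sides explicitly via the formula $\cH(\cC) \simeq \omega(\underline{\omega}(R))$ supplied by Ayoub's weak Tannakian formalism, and to show that the object $\underline{HH'}^k_R(R) \in \MAM(k;R)^\oplus$ is canonically recovered as the filtered homotopy colimit, over the finite Galois extensions $\overline{k}/l/k$, of its best approximations by objects of $\MAM(l/k;R)^\oplus$.

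First, I would observe that each fully faithful inclusion $I_{l/k}: \MAM(l/k;R)^\oplus \hookrightarrow \MAM(k;R)^\oplus$ is a triangulated functor preserving arbitrary direct sums between compactly generated triangulated categories, and hence admits, by \cite[Thm.~8.4.4]{Neeman}, a right adjoint $R_{l/k}$ with $R_{l/k}\circ I_{l/k} \simeq \id$. Write $\underline{HH'}^k_R$ for the right adjoint of $HH'^k_R$ (which exists for the same reason, as already noted in the construction of the Hopf dg algebra). Since the fiber functor on $\MAM(l/k;R)^\oplus$ is the restriction $HH'^k_R \circ I_{l/k}$, its right adjoint is $R_{l/k}\circ \underline{HH'}^k_R$, whence
$$ \cH(\MAM(l/k;R)^\oplus) \;\simeq\; HH'^k_R\bigl(I_{l/k}\,R_{l/k}\,\underline{HH'}^k_R(R)\bigr). $$
The component of the morphism \eqref{eq:induced1} coming from $l$ is then nothing but $HH'^k_R$ applied to the counit $\epsilon_{l/k}: I_{l/k}\,R_{l/k}\,\underline{HH'}^k_R(R) \to \underline{HH'}^k_R(R)$.

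The heart of the argument is to prove that the canonical morphism
$$ \hocolim_{\overline{k}/l/k}\; I_{l/k}\,R_{l/k}\,\underline{HH'}^k_R(R) \;\too\; \underline{HH'}^k_R(R) $$
is an isomorphism in $\MAM(k;R)^\oplus$. I would verify this on compact generators: every compact generator $M_R(\mathrm{Spec}(l'))[m]$ of $\MAM(k;R)^\oplus$ lies in $\MAM(l/k;R)^\oplus$ as soon as the Galois extension $l/k$ contains $l'$. For such a compact $X$ and for every $l \supset l'$, the full faithfulness of $I_{l/k}$ together with the $(I_{l/k},R_{l/k})$-adjunction yields
$$ \Hom\bigl(X,\, I_{l/k}\,R_{l/k}\,\underline{HH'}^k_R(R)\bigr) \;\simeq\; \Hom\bigl(X,\, \underline{HH'}^k_R(R)\bigr), $$
so the system $l \mapsto \Hom(X, I_{l/k}R_{l/k}\underline{HH'}^k_R(R))$ is eventually constant with value $\Hom(X, \underline{HH'}^k_R(R))$. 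Since $X$ is compact, $\Hom(X,-)$ commutes with the filtered homotopy colimit, which gives the desired isomorphism on all compact generators and hence the claim.

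To conclude, I would apply the triangulated functor $HH'^k_R$, which preserves arbitrary direct sums and therefore filtered homotopy colimits; this transports the preceding isomorphism to the invertibility of \eqref{eq:induced1}, and the Hopf dg algebra structure is preserved by the functoriality of Ayoub's construction recalled in \cite[Prop.~1.48]{Ayoub1}. The step I expect to require the most care is the bookkeeping identifying, in $\cD(R)$, the morphism of Hopf dg algebras furnished abstractly by the functoriality of Ayoub's formalism with the concrete morphism obtained by applying $HH'^k_R$ to the counits $\epsilon_{l/k}$; the other potentially delicate point, namely that the compact objects of $\MAM(k;R)^\oplus$ exhaust along the filtered family of the $\MAM(l/k;R)^\oplus$, is immediate from the explicit description of the compact generators as $M_R(\mathrm{Spec}(l'))[m]$ with $l'/k$ finite separable.
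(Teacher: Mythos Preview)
Your proposal is correct and follows essentially the same route as the paper: both set up the adjunctions $I_{l/k}\dashv \underline{I}_{l/k}$ (your $R_{l/k}$), identify $\cH(\MAM(l/k;R)^\oplus)$ with $HH'^k_R\bigl(I_{l/k}\,\underline{I}_{l/k}\,\underline{HH'}^k_R(R)\bigr)$, and reduce to showing that $\mathrm{colim}_{\overline{k}/l/k}\, I_{l/k}\,\underline{I}_{l/k}(N)\simeq N$, which is verified using that each compact generator $M_R(\mathrm{Spec}(l'))$ eventually lies in some $\MAM(l/k;R)^\oplus$. The only cosmetic difference is that the paper checks this last isomorphism by reducing $N$ to a compact generator (after noting that $\underline{I}_{l/k}$ preserves direct sums), whereas you test it by applying $\Hom(X,-)$ for compact $X$; these are equivalent standard maneuvers.
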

\begin{proof}
Consider the following adjunctions:
$$
\xymatrix{
\MAM(k;R)^\oplus \ar@<1ex>[d]^-{\underline{I}_{l/k}} && \cD(R) \ar@<1ex>[d]^-{\underline{HH}'^k_R} \\
\MAM(l/k;R)^\oplus \ar@<1ex>[u]^-{I_{l/k}} && \MAM(k;R)^\oplus \ar@<1ex>[u]^-{HH'^k_R}\,,
}
$$
where the existence of the right adjoint $\underline{I}_{l/k}$ follows from the fact that $I_{l/k}$ preserves arbitrary direct sums and that the triangulated category $\MAM(l/k;R)^\oplus$ is compactly generated. Let us write $HH'^{l/k}_R$ (resp. $\underline{HH}'^{l/k}_R$) for the composition $HH'^k_R \circ I_{l/k}$ (resp. $\underline{I}_{l/k} \circ \underline{HH}'^k_R$). Under these notations, we have the isomorphisms:
\begin{eqnarray}
(HH'^k_R \circ \underline{HH}'^k_R)(R) & = & HH'^k_R(\underline{HH}'^k_R(R)) \nonumber \\
& \simeq  &  HH'^k_R(\mathrm{colim}_{\overline{k}/l/k}(I_{l/k} \circ \underline{I}_{l/k})(\underline{HH}'^k_R(R))) \label{eq:isom-11} \\
& \simeq & \mathrm{colim}_{\overline{k}/l/k} HH'^k_R(I_{l/k}(\underline{I}_{l/k}(\underline{HH}'^k_R(R)))) \label{eq:isom-22} \\
&  \simeq & \mathrm{colim}_{\overline{k}/l/k} (HH'^{l/k}_R \circ \underline{HH}'^{l/k}_R)(R)\,,\nonumber
\end{eqnarray}
where \eqref{eq:isom-11} follows from Lemma \ref{lem:aux-1} below (with $N:=\underline{HH'}^k_R(R)$) and \eqref{eq:isom-22} from the fact that $HH'^k_R$ preserves (homotopy) colimits.
\end{proof}
\begin{lemma}\label{lem:aux-1}
For every object $N \in \MAM(k;R)^\oplus$, we have an induced isomorphism $\mathrm{colim}_{\overline{k}/l/k} (I_{l/k} \circ \underline{I}_{l/k})(N) \simeq N$.
\end{lemma}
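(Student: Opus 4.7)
The plan is to exploit the equivalence $\MAM(k;R)^\oplus \simeq \cD(\mathrm{Rep}_R(\mathrm{Gal}(\overline{k}/k)))$ established at the start of the section, under which $I_{l/k}$ identifies with inflation along $\mathrm{Gal}(\overline{k}/k) \twoheadrightarrow \mathrm{Gal}(l/k)$ and its right adjoint $\underline{I}_{l/k}$ identifies with the $\mathrm{Gal}(\overline{k}/l)$-invariants functor. As $l$ ranges over the filtered poset of finite Galois subextensions of $\overline{k}/k$, the open subgroups $\mathrm{Gal}(\overline{k}/l)$ form a fundamental system of neighbourhoods of $1$, so continuity of the Galois action should force the filtered colimit of the counits $(I_{l/k}\circ \underline{I}_{l/k})(N) \to N$ to exhaust $N$.

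To make this precise, I would first reduce to checking the statement after applying $\Hom(G,-)$ for $G$ ranging over a set of compact generators of $\MAM(k;R)^\oplus$. The obvious such set consists of the shifts $M_R(\mathrm{Spec}(l))[m]$, with $l/k$ finite separable and $m \in \bbZ$, and each such $G$ already lives in $\MAM(l_0/k;R)^\oplus$ as soon as $l_0$ contains the Galois closure of $l$. Since $I_{l/k}$ is a fully faithful inclusion of a full subcategory, for any $l \supseteq l_0$ we may write $G \simeq I_{l/k}(G')$ and deduce that the counit $(I_{l/k}\circ \underline{I}_{l/k})(G) \to G$ is an isomorphism. Combining this with the adjunction, for such an $l$ I get
\[
\Hom(G,(I_{l/k}\circ \underline{I}_{l/k})(N)) \simeq \Hom(G',\underline{I}_{l/k}(N)) \simeq \Hom(I_{l/k}(G'),N) \simeq \Hom(G,N),
\]
and the transition maps in the directed system stabilise to this common value. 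Compactness of $G$ then lets me commute $\Hom(G,-)$ past the filtered colimit, so the canonical map $\mathrm{colim}_{\overline{k}/l/k}(I_{l/k}\circ \underline{I}_{l/k})(N) \to N$ is an isomorphism on $\Hom(G,-)$ for every compact generator, hence an isomorphism.

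The main subtlety will be to construct the directed system in the first place: one needs compatible transition maps $(I_{l/k}\circ \underline{I}_{l/k})(N) \to (I_{l'/k}\circ \underline{I}_{l'/k})(N)$ for $l \subseteq l'$. For this I would use the factorisation $I_{l/k} = I_{l'/k}\circ J_{l'/l}$ of fully faithful inclusions, take right adjoints to obtain $\underline{I}_{l/k} = \underline{J}_{l'/l}\circ \underline{I}_{l'/k}$, and then insert the counit of the $(J_{l'/l},\underline{J}_{l'/l})$-adjunction between $\underline{J}_{l'/l}$ and itself, i.e. $I_{l'/k}\circ (J_{l'/l}\circ\underline{J}_{l'/l})\circ \underline{I}_{l'/k} \Rightarrow I_{l'/k}\circ \mathrm{id}\circ \underline{I}_{l'/k}$. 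The triangular identities ensure functoriality in the index $l$ and compatibility with the counit to $N$, legitimising the colimit considered in the statement.
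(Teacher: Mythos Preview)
Your argument is correct and rests on the same core observation as the paper: each compact generator of $\MAM(k;R)^\oplus$ already lies in $\MAM(l/k;R)^\oplus$ for some finite Galois $l/k$, so from that level onward the counit $(I_{l/k}\circ\underline{I}_{l/k})\Rightarrow\id$ is an isomorphism on it. The organisation, however, is dual to the paper's. The paper first shows that $\underline{I}_{l/k}$ preserves arbitrary direct sums (because $I_{l/k}$ does and $\MAM(l/k;R)^\oplus$ is compactly generated), hence the whole colimit functor does, and then reduces to the case $N=M_R(\mathrm{Spec}(l'))$; for such $N$ the diagram of counits is eventually constant equal to $N$. You instead keep $N$ arbitrary and test against compact generators $G$, using full faithfulness of $I_{l/k}$ and the adjunction to identify $\Hom(G,(I_{l/k}\circ\underline{I}_{l/k})(N))$ with $\Hom(G,N)$ for $l$ large, and then invoke compactness of $G$ to pass the colimit outside. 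Your route has the small advantage of not needing the separate check that $\underline{I}_{l/k}$ commutes with direct sums; the paper's route is a line shorter once that check is made. Your final paragraph on building the transition maps via factoring $I_{l/k}=I_{l'/k}\circ J_{l'/l}$ and inserting the $(J_{l'/l},\underline{J}_{l'/l})$-counit is the correct way to make the filtered system precise, and the triangle identities do give the required compatibility with the counits to $N$.
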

\begin{proof}
Since the inclusion functor $I_{l/k}$ preserves arbitrary direct sums and the triangulated category $\MAM(l/k;R)^\oplus$ is compactly generated, the right adjoint functor $\underline{I}_{l/k}$ also preserves arbitrary direct sums; see \cite[Lem.~2.1.28]{Ayoub-thesis-1}. Consequently, it suffices to establish the above isomorphism in the particular case where $N=M_R(\mathrm{Spec}(l'))$ with $\overline{k}/l'/k$ an intermediate finite separable field extension. Choose an intermediate finite Galois field extension $\overline{k}/l/k$ containing $l'$. Since the object $M_R(\mathrm{Spec}(l'))$ belongs to $\MAM(l/k;R)^\oplus$, it identifies with $(I_{l/k} \circ \underline{I}_{l/k})(M_R(\mathrm{Spec}(l')))$. This achieves the proof.
\end{proof}
\begin{notation}
Given an intermediate finite Galois field extension $\overline{k}/l/k$, let us write $\cC(\mathrm{Gal}(l/k),R)$ for the Hopf $R$-algebra of functions with multiplication (resp. comultiplication) induced by the multiplication in $R$ (resp. in $\mathrm{Gal}(l/k)$).
\end{notation}
Note that since the Galois group $\mathrm{Gal}(l/k)$ is finite, $\cD(\mathrm{Rep}_R(\mathrm{Gal}(l/k)))$ is equivalent to the derived category of comodules $\cD(\mathrm{coMod}(\cC(\mathrm{Gal}(l/k),R)))$. Note also that $\cC^0(\mathrm{Gal}(\overline{k}/k),R)$ identifies with the colimit $\mathrm{colim}_{\overline{k}/l/k} \cC(\mathrm{Gal}(l/k),R)$. 
\begin{proposition}\label{prop:Hopf2}
When $k \subset \bbC$ and $R$ is a $\bbC$-algebra, the associated Hopf dg algebra $\cH(\MAM(l/k;R)^\oplus)$ is concentrated in degree zero and agrees with $\cC(\mathrm{Gal}(l/k),R)$.
\end{proposition}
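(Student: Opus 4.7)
The plan is to identify the composition $HH'^{l/k}_R: \MAM(l/k;R)^\oplus \to \cD(R)$ with the classical forgetful functor $\omega_{\mathrm{for}}: \cD(\mathrm{Rep}_R(\mathrm{Gal}(l/k))) \to \cD(R)$ under the equivalence $\MAM(l/k;R)^\oplus \simeq \cD(\mathrm{Rep}_R(\mathrm{Gal}(l/k)))$ recalled before the statement, and then to compute the Hopf dg algebra of $\omega_{\mathrm{for}}$ by direct calculation.

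First, I would evaluate $HH'^{l/k}_R$ on the compact generators. For $\overline{k}/l/l'/k$ an intermediate separable extension, Remark \ref{rk:last} gives $\Psi(M_R(\mathrm{Spec}(l'))) \simeq U_R(l')$. Since $l'/k$ is finite separable (hence \'etale), classical Hochschild homology yields $HH^k(l') \simeq l'$ concentrated in degree zero, so $HH^k_R(U_R(l')) \simeq l' \otimes_k R$ in degree zero. Because $R$ is a $\bbC$-algebra and $l' \subset \overline{k} \subset \bbC$, the \'etale $k$-algebra $l' \otimes_k R$ splits as $\prod_{\sigma \in \mathrm{Hom}_k(l',\overline{k})} R$. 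Under the equivalence with $\cD(\mathrm{Rep}_R(\mathrm{Gal}(l/k)))$, the generator $M_R(\mathrm{Spec}(l'))$ corresponds to the permutation representation $R[\mathrm{Gal}(l/k)/\mathrm{Gal}(l/l')]$, whose underlying $R$-module is precisely this split algebra.

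Second, I would promote this identification on generators to a natural isomorphism $HH'^{l/k}_R \simeq \omega_{\mathrm{for}}$ of symmetric monoidal functors. Both functors are $R$-linear, symmetric monoidal, triangulated, and preserve arbitrary direct sums. Since the permutation representations $\{R[\mathrm{Gal}(l/k)/\mathrm{Gal}(l/l')]\}$ form a set of compact generators of $\cD(\mathrm{Rep}_R(\mathrm{Gal}(l/k)))$ that is essentially closed under tensor products (via the Chinese remainder decomposition of $l'_1 \otimes_k l'_2$ as an \'etale algebra), both functors agree on this tensor-closed system of generators, and the standard universal-property argument for compactly generated stable symmetric monoidal $R$-linear categories yields the desired natural isomorphism. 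With the identification in hand, the right adjoint of $\omega_{\mathrm{for}}$ is the coinduction functor $\underline{\omega}_{\mathrm{for}}(M) \simeq \cC(\mathrm{Gal}(l/k), M)$ with $\mathrm{Gal}(l/k)$ acting by translation, so $\cH(\MAM(l/k;R)^\oplus) \simeq \omega_{\mathrm{for}}(\underline{\omega}_{\mathrm{for}}(R)) \simeq \cC(\mathrm{Gal}(l/k), R)$, manifestly concentrated in degree zero. Unwinding Ayoub's construction in \cite[\S1]{Ayoub1} shows that the resulting Hopf algebra structure coincides with the canonical one: pointwise multiplication from $R$ and comultiplication dual to the group law of $\mathrm{Gal}(l/k)$.

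The main obstacle is the functorial identification in the second step, which requires the natural transformation to be compatible with morphisms between generators, not merely agree on objects. An alternative that sidesteps this issue is a direct computation of $\underline{HH'}^{l/k}_R(R)$ from its universal mapping property: the adjunction forces $\Hom_{\MAM(l/k;R)^\oplus}(M_R(\mathrm{Spec}(l')), \underline{HH'}^{l/k}_R(R)[n]) \simeq \Hom_{\cD(R)}(l' \otimes_k R, R[n])$, which vanishes for $n \neq 0$ and reproduces the regular $\mathrm{Gal}(l/k)$-representation in degree zero; applying $HH'^{l/k}_R$ then yields $\cC(\mathrm{Gal}(l/k), R)$ as the underlying $R$-module, with the Hopf structure coming from Ayoub's formalism as above.
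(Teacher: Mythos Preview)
Your proposal is correct and follows essentially the same strategy as the paper: evaluate the composite functor on the generators $M_R(\mathrm{Spec}(l'))$, identify it with the forgetful functor on $\cD(\mathrm{coMod}(\cC(\mathrm{Gal}(l/k),R)))$, and read off the Hopf dg algebra from Ayoub's formalism. The one noteworthy difference is in how the identification with the forgetful functor is justified: the paper observes that $HH^k_R(l')\simeq H^\ast_{dR}(\mathrm{Spec}(l'))\otimes_k R$, invokes Grothendieck's de~Rham--Betti comparison (this is where the hypotheses $k\subset\bbC$ and $R$ a $\bbC$-algebra enter), and then cites Deligne--Milne to conclude that the Betti realization \emph{is} the forgetful fiber functor on Artin motives --- thereby outsourcing the functoriality issue you flag as the ``main obstacle.'' Your direct route, splitting $l'\otimes_k R$ over the $k$-embeddings $l'\hookrightarrow\bbC$, reaches the same point without the cohomological detour, and your alternative computation of $\underline{HH'}^{l/k}_R(R)$ via its mapping property is in fact carried out in the paper later as Lemma~\ref{lem:cosets}.
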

\begin{proof}
Consider the following $R$-linear triangulated symmetric monoidal functor
\begin{equation}\label{eq:composition}
\MAM(l/k;R)^\oplus \stackrel{I_{l/k}}{\too} \MAM(k;R)^\oplus \stackrel{\Psi}{\too} \NMAM(k;R)^\oplus \stackrel{HH^k_R}{\too} \cD(R)\,.
\end{equation}
By construction, it preserves arbitrary direct sums. Let $l/l'/k$ be an intermediate separable field extension. Making use of Remark \ref{rk:last} and of the isomorphism $U_R(\perf_\dg(\mathrm{Spec}(l'))) \simeq U_R(l')$, we observe that the image of $M_R(\mathrm{Spec}(l'))$ under the above functor \eqref{eq:composition} identifies with $HH^k_R(l')$. As proved in \cite[\S8]{Homogeneous}, we have the following computation:
$$
HH^k_n(l') \simeq  \left\{ \begin{array}{ll}
         k^{[l':k]}& \mathrm{when}\,\, n=0 \\
         0 & \mathrm{otherwise}\,. \end{array} \right.
         $$
         Therefore, \eqref{eq:composition} identifies with the functor $M_R(\mathrm{Spec}(l')) \mapsto H^\ast_{dR}(\mathrm{Spec}(l'))\otimes_kR$, where $H^\ast_{dR}$ stands for de Rham cohomology. Since by assumption $k \subset \bbC$ and $R$ is a $\bbC$-algebra, the latter functor identifies also with $M_R(\mathrm{Spec}(l')) \mapsto H^\ast_B(\mathrm{Spec}(l'))\otimes_\bbQ R$, where $H^\ast_B$ stands for Betti cohomology; see Grothendieck \cite{Grothendieck}. Hence, under the equivalence of categories between $\MAM(l/k;R)^\oplus$ and $\cD(\mathrm{coMod}(\cC(\mathrm{Gal}(l/k),R)))$, the above functor \eqref{eq:composition} corresponds to the forgetful functor
\begin{equation}\label{eq:forget}
\cD(\mathrm{coMod}(\cC(\mathrm{Gal}(l/k),R))\stackrel{\mathrm{forget}}{\too} \mathrm{Mod}(R))\,;
\end{equation}
see Deligne-Milne \cite[Rk.~6.18]{Deligne-Milne}. The preceding functor \eqref{eq:forget} clearly satisfies the conditions of Ayoub's weak Tannakian formalism. Using the universal property \cite[Prop.~1.55]{Ayoub1}, we conclude finally that the associated Hopf dg algebra is concentrated in degree zero and agrees with $\cC(\mathrm{Gal}(l/k),R)$. This achieves the proof.
\end{proof}
By combining the above Propositions \ref{prop:Hopf1} and \ref{prop:Hopf2}, we obtain the identifications: 
\begin{equation}\label{eq:key}
\cH(\MAM(k;R)^\oplus) \simeq \mathrm{colim}_{\overline{k}/l/k} \cC(\mathrm{Gal}(l/k),R) \simeq \cC^0(\mathrm{Gal}(\overline{k}/k),R)\,.
\end{equation}
This concludes the proof of Theorem \ref{thm:new}.
\section{Proof of Theorem \ref{thm:main2}}
As in the proof of Proposition \ref{prop:main}, the functor $HH^{\overline{k}}_R:\NMix(\overline{k};R)^\oplus \to \cD(R)$, as well as its pre-composition with the following base-change functors (see \S\ref{sub:base-change})
\begin{eqnarray}\label{eq:base-change-last}
-\otimes_l \overline{k}: \NMix(l;R)^\oplus \too \NMix(\overline{k};R)^\oplus && \overline{k}/l/k\,\,\mathrm{finite}\,\,\mathrm{Galois}\,,
\end{eqnarray}
satisfies the conditions of Ayoub's weak Tannakian formalism. Consequently, the functors \eqref{eq:base-change-last} give rise to an induced morphism of Hopf dg algebras: 
\begin{equation}\label{eq:colimit-last}
\mathrm{colim}_{\overline{k}/l/k} \cH(\NMix(l;R)^\oplus) \too \cH(\NMix(\overline{k};R)^\oplus)\,.
\end{equation}
\begin{proposition}\label{prop:colim}
The above morphism \eqref{eq:colimit-last} is invertible.
\end{proposition}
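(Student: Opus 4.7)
The strategy is to mimic the proof of Proposition \ref{prop:Hopf1}, replacing the inclusion functors $I_{l/k}$ by the base-change functors $B_l := -\otimes_l \overline{k}$ of \S\ref{sub:base-change}. Since each $B_l$ preserves arbitrary direct sums and $\NMix(l;R)^\oplus$ is compactly generated (Remark \ref{rk:symmetric}), Brown representability \cite[Thm.~8.4.4]{Neeman} supplies a right adjoint $\underline{B}_l$. Moreover, $B_l$ sends the compact generator $U_R(\cA)$ to the compact object $U_R(\cA\otimes_l\overline{k})$, so $\underline{B}_l$ also preserves arbitrary direct sums by \cite[Lem.~2.1.28]{Ayoub-thesis-1}.

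Setting $HH^l_R := HH^{\overline{k}}_R \circ B_l$ and $\underline{HH}^l_R := \underline{B}_l \circ \underline{HH}^{\overline{k}}_R$, the same formal computation as in the proof of Proposition \ref{prop:Hopf1} reduces the invertibility of \eqref{eq:colimit-last} to the analogue of Lemma \ref{lem:aux-1}:
$$\mathrm{colim}_{\overline{k}/l/k}(B_l \circ \underline{B}_l)(N) \stackrel{\sim}{\too} N \qquad \forall\, N \in \NMix(\overline{k};R)^\oplus.$$
Because $B_l$ and $\underline{B}_l$ both preserve direct sums, it suffices to establish this on the compact generators $N = U_R(\cA)$ with $\cA \in \dgcat_{\mathrm{sp}}(\overline{k})$.

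The key technical input is a descent statement: every smooth proper dg category $\cA$ over $\overline{k}$ arises as $\cA_0 \otimes_{l_0} \overline{k}$ for some finite Galois intermediate extension $\overline{k}/l_0/k$ and some $\cA_0 \in \dgcat_{\mathrm{sp}}(l_0)$. This should follow from the finite presentability of smooth proper dg categories combined with the filtered presentation $\overline{k} = \mathrm{colim}_{\overline{k}/l/k} l$; concretely, $\dgcat_{\mathrm{sp}}(\overline{k})$ is the filtered colimit of $\dgcat_{\mathrm{sp}}(l)$ along the base-change functors \eqref{eq:base-change}. Via the universal property \eqref{eq:equivalence-univ}, this descent lifts to an equivalence between $\NMix(\overline{k};R)^\oplus$ and the filtered colimit of the categories $\NMix(l;R)^\oplus$ along base-change, taken in the $\infty$-category of presentable stable $R$-linear $\infty$-categories.

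The main obstacle will be the final transition. In Lemma \ref{lem:aux-1} the counit $(I_{l/k}\circ \underline{I}_{l/k})(N) \to N$ is already an isomorphism once $l$ is large enough, because $I_{l/k}$ is a fully faithful inclusion and $N$ eventually lies in its essential image. By contrast, the counit $(B_l\circ \underline{B}_l)(N) \to N$ is typically not an isomorphism for any fixed $l$, so one must genuinely use the full filtered colimit together with the descent equivalence above to deduce that $\mathrm{colim}_{\overline{k}/l/k}(B_l \circ \underline{B}_l)$ is naturally isomorphic to $\mathrm{id}_{\NMix(\overline{k};R)^\oplus}$. Substituting this formal identity into the computation of $(HH^{\overline{k}}_R \circ \underline{HH}^{\overline{k}}_R)(R)$, exactly as in the proof of Proposition \ref{prop:Hopf1}, then yields the required isomorphism of Hopf dg algebras.
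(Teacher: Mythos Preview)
Your overall architecture matches the paper exactly: set up the adjunctions $(-\otimes_l\overline{k}) \dashv \mathrm{Res}_{\overline{k}/l}$, reduce to the statement $\mathrm{colim}_{\overline{k}/l/k}(B_l\circ\underline{B}_l)(N)\simeq N$, reduce further to compact generators $N=U_R(\cA)$, and invoke the descent fact that every $\cA\in\dgcat_{\mathrm{sp}}(\overline{k})$ comes from some finite level $l_0$. The paper packages this as Lemma~\ref{lem:colimit-last}.

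Where you diverge is in the endgame. You propose to finish by asserting that $\NMix(\overline{k};R)^\oplus$ is the filtered colimit of the $\NMix(l;R)^\oplus$ in presentable stable $\infty$-categories, deduced ``via the universal property \eqref{eq:equivalence-univ}''. That universal property, however, characterizes $\Mot(l;R)^\infty$ in terms of additive invariants out of $\dgcat(l)^\infty$; it does not directly hand you the colimit identification you want, and certainly not for the subcategory $\NMix$. To actually prove that colimit statement you would need to know that for $\cA,\cB\in\dgcat_{\mathrm{sp}}(l)$ the map
\[
\mathrm{colim}_{\overline{k}/l'/l}\Hom(U_R(\cA)\otimes_l l',\,U_R(\cB)\otimes_l l')\;\longrightarrow\;\Hom(U_R(\cA)\otimes_l\overline{k},\,U_R(\cB)\otimes_l\overline{k})
\]
is an isomorphism. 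This is precisely Sublemma~\ref{lem:aux-111} in the paper, proved concretely by identifying both sides with algebraic $K$-theory via Proposition~\ref{prop:computation} and using that $K_n(-)$ commutes with filtered colimits of rings. The paper then feeds this into a direct Yoneda argument (Lemma~\ref{lem:colimit-last}) rather than passing through an $\infty$-categorical colimit of categories.

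So your route is not wrong, but the step you flag as ``the main obstacle'' is not actually discharged: the $\infty$-categorical colimit equivalence you invoke is equivalent in content to the Hom computation the paper carries out by hand, and you have pushed that computation into an unproved assertion. Supplying it amounts to writing down the paper's Sublemma~\ref{lem:aux-111}.
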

\begin{proof}
Consider the following adjunctions
$$
\xymatrix{
\NMix(\overline{k};R)^\oplus \ar@<1ex>[d]^-{\mathrm{Res}_{\overline{k}/l}} && \cD(R) \ar@<1ex>[d]^-{\underline{HH}^{\overline{k}}_R} \\
\NMix(l;R)^\oplus \ar@<1ex>[u]^-{-\otimes_l \overline{k}} && \ar@<1ex>[u]^-{HH^{\overline{k}}_R}\NMix(\overline{k};R)^\oplus\,,
}
$$
where the existence of the right adjoint $\mathrm{Res}_{\overline{k}/l}$ follows from the fact that $-\otimes_l \overline{k}$ preserves arbitrary direct sums and that the triangulated category $\NMix(l;R)^\oplus$ is compactly generated. We have the following isomorphisms
\begin{eqnarray}
(HH^{\overline{k}}_R \circ \underline{HH}^{\overline{k}}_R)(R) & = & HH^{\overline{k}}_R(\underline{HH}^{\overline{k}}_R(R)) \nonumber \\
 & \simeq & HH^{\overline{k}}_R(\mathrm{colim}_{\overline{k}/l/k}(\mathrm{Res}_{\overline{k}/l}(\underline{HH}^{\overline{k}}_R(R))\otimes_l \overline{k})) \label{eq:last} \\
 & \simeq & \mathrm{colim}_{\overline{k}/l/k} HH^{\overline{k}}_R (\mathrm{Res}_{\overline{k}/l}(\underline{HH}^{\overline{k}}_R(R))\otimes_l \overline{k}) \label{eq:last1} \\
 & \simeq & \mathrm{colim}_{\overline{k}/l/k} (HH^l_R \circ \underline{HH}^l_R)(R)\,, \label{eq:last2}
\end{eqnarray}
where \eqref{eq:last} follows from Lemma \ref{lem:colimit-last} below (with $N:=\underline{HH}^{\overline{k}}_R(R)$), \eqref{eq:last1} from the fact that $HH^{\overline{k}}_R$ preserves (homotopy) colimits, and \eqref{eq:last2} from the natural isomorphisms $\mathrm{Res}_{\overline{k}/l}\circ \underline{HH}^{\overline{k}}_R \simeq \underline{HH}^l_R$ and $HH^{\overline{k}}_R \circ (-\otimes_l \overline{k}) \simeq HH^l_R$.
\end{proof}
\begin{lemma}\label{lem:colimit-last}
For every object $N \in \NMix(\overline{k};R)^\oplus$, we have an induced isomorphism
$ \mathrm{colim}_{\overline{k}/l/k}(\mathrm{Res}_{\overline{k}/l}(N) \otimes_l \overline{k})\simeq N$, where the colimit is taken over the intermediate finite Galois field extensions.
\end{lemma}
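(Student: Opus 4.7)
First, I would reduce to a set of compact generators. The natural transformation built out of the counits $(-\otimes_l \overline{k})\circ \mathrm{Res}_{\overline{k}/l} \to \mathrm{id}$ is natural in $N$, and both sides of the claimed isomorphism are triangulated functors of $N$ preserving arbitrary direct sums: the left adjoint $-\otimes_l \overline{k}$ preserves direct sums trivially, and the right adjoint $\mathrm{Res}_{\overline{k}/l}$ preserves direct sums because the left adjoint preserves the compact generators $\{U_R(\cA_l)\,|\, \cA_l \in \dgcat_{\mathrm{sp}}(l)\}$ (invoking \cite[Lem.~2.1.28]{Ayoub-thesis-1} exactly as in Lemma \ref{lem:sums}); filtered colimits commute with direct sums. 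Hence the full subcategory of $N \in \NMix(\overline{k};R)^\oplus$ on which the claimed isomorphism holds is a localizing triangulated subcategory, so it suffices to establish the statement on a set of compact generators.

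By Remark \ref{rk:symmetric} applied over $\overline{k}$, such a set is given by $\{U_R(\cA)\,|\, \cA \in \dgcat_{\mathrm{sp}}(\overline{k})\}$. For each such $\cA$, I would invoke the standard descent fact that smooth proper dg categories are finitely presented: combined with the identification $\overline{k} \simeq \mathrm{colim}_{\overline{k}/l/k}\, l$, this yields an intermediate finite Galois extension $\overline{k}/l_0/k$ together with $\cA_{l_0} \in \dgcat_{\mathrm{sp}}(l_0)$ such that $\cA_{l_0} \otimes_{l_0} \overline{k} \simeq \cA$. Setting $F_l := -\otimes_l \overline{k}$, we therefore obtain $N \simeq F_{l_0}(U_R(\cA_{l_0}))$, and by cofinality of the subposet $\{l : l \supseteq l_0\}$ inside the indexing category, the task reduces to proving that the comparison map $\mathrm{colim}_{l \supseteq l_0} (F_l \circ \mathrm{Res}_{\overline{k}/l})(N) \to N$ is an isomorphism.

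The intuition is that, via the bimodule description of the extension/restriction of scalars adjunction along $l \hookrightarrow \overline{k}$, the counit $(F_l \circ \mathrm{Res}_{\overline{k}/l})(N) \to N$ plays the role of a multiplication map $N \otimes_l \overline{k} \to N$, and the filtered colimit of these maps recovers $N \otimes_{\overline{k}} \overline{k} \simeq N$ since $\overline{k} = \mathrm{colim}_l\, l$. Making this rigorous inside the triangulated category $\NMix(\overline{k};R)^\oplus$ is the main obstacle. The cleanest route I would take is to work $\infty$-categorically: the descent statement for smooth proper dg categories implies that the $\infty$-enhancements of $\{\NMix(l;R)^\oplus\}_{\overline{k}/l/k}$, together with the base-change functors $F_l$, form a filtered colimit diagram in presentable stable $\infty$-categories whose colimit is the enhancement of $\NMix(\overline{k};R)^\oplus$; the general fact that, in such a colimit, the identity endofunctor is canonically the colimit of the compositions $F_l \circ \mathrm{Res}_{\overline{k}/l}$ then yields the claim.
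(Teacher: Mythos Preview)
Your reduction to compact generators and the descent of a smooth proper $\overline{k}$-dg category to a finite subextension match the paper's opening moves. The divergence is in the final step, and there the proposal has a genuine gap.

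You assert that ``the descent statement for smooth proper dg categories implies that the $\infty$-enhancements of $\{\NMix(l;R)^\oplus\}_{\overline{k}/l/k}$ \ldots\ form a filtered colimit diagram \ldots\ whose colimit is the enhancement of $\NMix(\overline{k};R)^\oplus$.'' Object-level descent alone does not give this. To identify a presentable stable $\infty$-category with a filtered colimit along compact-preserving functors one needs, in addition, that the induced functor from the colimit is \emph{fully faithful} on compact objects, i.e.\ that for $\cA,\cB \in \dgcat_{\mathrm{sp}}(l)$ the natural map
\[
\mathrm{colim}_{\overline{k}/l'/l}\,\Hom\bigl(U_R(\cA)\otimes_l l',\,U_R(\cB)\otimes_l l'\bigr)\ \longrightarrow\ \Hom\bigl(U_R(\cA)\otimes_l \overline{k},\,U_R(\cB)\otimes_l \overline{k}\bigr)
\]
is an isomorphism. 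This is precisely the computational core of the paper's argument (its Sublemma~\ref{lem:aux-111}), proved there via the $K$-theory description of Hom-sets in Proposition~\ref{prop:computation} together with the fact that algebraic $K$-theory preserves filtered colimits of rings. Your proposal never supplies this input; it is hidden inside the unproved colimit identification, so the argument as written is circular.

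By contrast, the paper stays at the triangulated level throughout: it tests the comparison map against compact generators $U_R(\cA)[n]$ via Yoneda, descends the \emph{test} object $\cA$ to a finite subextension, and then manipulates the resulting Hom-sets directly using the Sublemma and a cofinality argument. If you want to salvage the $\infty$-categorical route, you must first establish the Hom-continuity statement above; once that is done, your ``general fact'' about $\mathrm{id}\simeq \mathrm{colim}_l\, F_l\circ \mathrm{Res}_{\overline{k}/l}$ in a filtered colimit of $\mathrm{Pr}^L_{\mathrm{st}}$ is indeed standard and finishes the proof. But at that point you have done the same work as the paper, only packaged differently.
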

\begin{proof}
In order to simplify the exposition, we will omit the underscripts of $\Hom$. It will be clear from the context which category we are considering. Since the base-change functor $-\otimes_l \overline{k}$ preserves compact objects and the triangulated category $\NMix(l;R)^\oplus$ is compactly generated, the functor $\mathrm{Res}_{\overline{k}/l}$ preserves arbitrary direct sums; see \cite[Lem.~ 2.1.28]{Ayoub-thesis-1}. Consequently, the functor $\mathrm{colim}_{\overline{k}/l/k} \mathrm{Res}_{\overline{k}/l}(-)\otimes_{l} \overline{k}$ also preserves arbitrary direct sums. Thanks to the Yoneda Lemma and to the fact that $\{U_R(\cA)\,|\, \cA \in \dgcat_{\mathrm{sp}}(\overline{k})\}$ is a set of compact generators of $\NMix(\overline{k};R)^\oplus$, it suffices then to show that the induced morphisms
$$
\Hom(U_R(\cA)[n],\mathrm{colim}_{\overline{k}/l/k} (\mathrm{Res}_{\overline{k}/l}(U_R(\cB))\otimes_{l} \overline{k}))\too \Hom(U_R(\cA)[n], U_R(\cB))
$$
are invertible for every $\cA, \cB \in \dgcat_{\mathrm{sp}}(\overline{k})$ and $n \in \bbZ$. Since $\overline{k}\simeq \mathrm{colim}_{\overline{k}/l/k} l$, there exists an intermediate finite Galois field extension $\overline{k}/l_0/k$ and a smooth proper dg category $\cA_0 \in \dgcat_{\mathrm{sp}}(l_0)$ such that $\cA_0 \otimes_{l_0}\overline{k}$ is Morita equivalent to $\cA$. We can (and will) assume without loss of generality that $l_0=k$. The proof follows now from the following sequence of isomorphisms:
\begin{eqnarray}
&& \Hom(U_R(\cA)[n],\mathrm{colim}_{\overline{k}/l/k}(\mathrm{Res}_{\overline{k}/l}(U_R(\cB))\otimes_{l}\overline{k})) \nonumber \\
&  & \simeq\mathrm{colim}_{\overline{k}/l/k} \Hom(U_R(\cA_0)[n]\otimes_k l\otimes_l \overline{k}, \mathrm{Res}_{\overline{k}/l}(U_R(\cB))\otimes_{l}\overline{k}) \label{eq:isom-1} \\
&  & \simeq \mathrm{colim}_{\overline{k}/l/k} \mathrm{colim}_{\overline{k}/l'/l} \Hom(U_R(\cA_0)[n] \otimes_k l', \mathrm{Res}_{\overline{k}/l}(U_R(\cB)) \otimes_l l') \label{eq:isom-2} \\
&  & \simeq \mathrm{colim}_{\overline{k}/l/k} \Hom(U_R(\cA_0)[n]\otimes_{k} l, \mathrm{Res}_{\overline{k}/l}(U_R(\cB))) \label{eq:isom-3} \\
&  &  \simeq\mathrm{colim}_{\overline{k}/l/k} \Hom(U_R(\cA_0)[n]\otimes_k l\otimes_l \overline{k}, U_R(\cB)) \nonumber \\
&  &  \simeq\Hom(U_R(\cA)[n], U_R(\cB))\,. \nonumber
\end{eqnarray}
Some explanations are in order: \eqref{eq:isom-1} follows from the compactness of the object $U_R(\cA)[n]$ and from the Morita equivalence $\cA_0 \otimes_k \overline{k} \simeq \cA$; \eqref{eq:isom-2} follows from Sublemma \ref{lem:aux-111} below (with $N_1:= U_R(\cA_0)[n] \otimes_k l$ and $N_2:=\mathrm{Res}_{\overline{k}/l}(U_R(\cB))$); and finally \eqref{eq:isom-3} follows from the cofinal functor $\overline{k}/l/k \mapsto (\overline{k}/l/k,\overline{k}/l/l)$.
\end{proof}
\begin{sublemma}\label{lem:aux-111}
Given a field extension $\overline{k}/l$ and objects $N_1,N_2 \in \NMix(l;R)^\oplus$, we have an induced isomorphism
$$ \mathrm{colim}_{\overline{k}/l'/l} \Hom(N_1\otimes_{l} l', N_2\otimes_{l} l') \stackrel{\sim}{\too} \Hom(N_1\otimes_{l} \overline{k}, N_2\otimes_l \overline{k})\,,$$
where the colimit is taken over the intermediate finite Galois field extensions.
\end{sublemma}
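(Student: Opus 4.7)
The strategy is to reduce the assertion to a continuity statement for algebraic $K$-theory of smooth proper dg categories.

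First, I would reduce to the case of compact generators. Fix a compact object $N_1 = U_R(\cA)[m]$ with $\cA \in \dgcat_{\mathrm{sp}}(l)$ and $m \in \bbZ$. The base-change functors $-\otimes_l l'$ and $-\otimes_l \overline{k}$ preserve arbitrary direct sums and send $U_R(\cA)$ to the compact objects $U_R(\cA \otimes_l l')$ and $U_R(\cA \otimes_l \overline{k})$ (see \S\ref{sub:base-change} and Remark~\ref{rk:symmetric}). Hence $\Hom(N_1 \otimes_l l', -)$ and $\Hom(N_1 \otimes_l \overline{k}, -)$ are cohomological and preserve arbitrary direct sums; since filtered colimits of abelian groups commute with direct sums, both sides of the comparison map are cohomological and preserve direct sums in $N_2$. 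By the Yoneda lemma it suffices to verify the isomorphism for $N_2 = U_R(\cB)$ with $\cB \in \dgcat_{\mathrm{sp}}(l)$, and a symmetric argument reduces the problem in $N_1$ to a single compact generator $U_R(\cA)[m]$.

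In this reduced setting, the identification $U_R(\cA) \otimes_l l' \simeq U_R(\cA \otimes_l l')$ from \S\ref{sub:base-change}, the Morita-type compatibility $(\cA \otimes_l l')^\op \otimes_{l'} (\cB \otimes_l l') \simeq (\cA^\op \otimes_l \cB) \otimes_l l'$, and Proposition~\ref{prop:computation} yield
\begin{align*}
\Hom(N_1 \otimes_l l', N_2 \otimes_l l') &\simeq \pi_m\bigl(K\bigl((\cA^\op \otimes_l \cB) \otimes_l l'\bigr) \wedge HR\bigr), \\
\Hom(N_1 \otimes_l \overline{k}, N_2 \otimes_l \overline{k}) &\simeq \pi_m\bigl(K\bigl((\cA^\op \otimes_l \cB) \otimes_l \overline{k}\bigr) \wedge HR\bigr),
\end{align*}
and these isomorphisms are manifestly natural in the $l'$-parameter, so the comparison map becomes the natural map from the filtered colimit of the first expression (over intermediate finite Galois extensions $\overline{k}/l'/l$) to the second.

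Since $\overline{k} \simeq \mathrm{colim}_{\overline{k}/l'/l} l'$ as a filtered colimit of $l$-algebras, we have $(\cA^\op \otimes_l \cB) \otimes_l \overline{k} \simeq \mathrm{colim}_{\overline{k}/l'/l} (\cA^\op \otimes_l \cB) \otimes_l l'$ in $\dgcat(l)$. The conclusion then follows from the continuity of algebraic $K$-theory for dg categories (which commutes with filtered colimits), combined with the fact that smashing with $HR$ and taking spectrum homotopy groups both preserve filtered colimits. The principal obstacle is making the reduction to compact $N_1$ fully rigorous, since filtered colimits of abelian groups do not commute with products, and a non-compact $N_1$ would a priori need to be decomposed as a homotopy colimit of compact ones. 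In the actual application in the proof of Lemma~\ref{lem:colimit-last}, however, $N_1$ is already compact (being $U_R(\cA_0)[n]\otimes_k l$ with $\cA_0 \in \dgcat_{\mathrm{sp}}(k)$), so this technicality is not essential to that use.
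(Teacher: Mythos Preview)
Your proposal is correct and follows essentially the same approach as the paper: reduce both $N_1$ and $N_2$ to compact generators of the form $U_R(\cA)[n]$ and $U_R(\cB)$, identify the Hom groups via Proposition~\ref{prop:computation} with algebraic $K$-theory of $(\cA^\op \otimes_l \cB) \otimes_l l'$, and conclude from continuity of $K$-theory under the filtered colimit $\mathrm{colim}_{l'} l' \simeq \overline{k}$. The paper handles the reduction more tersely (``As in the proof of the preceding Lemma~\ref{lem:colimit-last}'') and writes $K_n(-)$ directly (legitimate since in the ambient Theorem~\ref{thm:main2} the ring $R$ is a $\bbQ$-algebra), but the content is the same, including the fact that the reduction in $N_1$ is only carried out for compact $N_1$---which, as you note, is all that is needed for the application.
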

\begin{proof}
As in the proof of the preceding Lemma \ref{lem:colimit-last}, it suffices to show that 
$$
\mathrm{colim}_{\overline{k}/l'/l} \Hom(U_R(\cA)[n]\otimes_{l} l', U_R(\cB)\otimes_{l} l') \too \Hom(U_R(\cA)[n] \otimes_l \overline{k}, U_R(\cB)\otimes_{l} \overline{k})
$$
is an isomorphism for every $\cA, \cB \in \dgcat_{\mathrm{sp}}(l)$ and $n \in \bbZ$. Thanks to Proposition \ref{prop:computation}, the left-hand side identifies with $\mathrm{colim}_{\overline{k}/l'/l} K_n((\cA^\op \otimes \cB)\otimes_{l} l')$ and the right-hand side with $K_n((\cA^\op \otimes \cB)\otimes_{l} \overline{k})$. The proof follows now from the isomorphism $\mathrm{colim}_{\overline{k}/l'/l} l' \simeq \overline{k}$ and from the fact that $K_n(-)$ preserves filtered colimits.
\end{proof}
\begin{corollary}\label{cor:last}
The sequence of Hopf dg algebras of Theorem \ref{thm:main2}
\begin{equation*}
\cC^0(\mathrm{Gal}(\overline{k}/k),R) \too \cH(\NMix(k;R)^\oplus) \too \cH(\NMix(\overline{k};R)^\oplus)
\end{equation*}
identifies with the following filtrant colimit of Hopf dg algebras
$$\mathrm{colim}_{\overline{k}/l/k} \left(\cC(\mathrm{Gal}(l/k),R) \too \cH(\NMix(k;R)^\oplus) \too \cH(\NMix(l;R)^\oplus) \right)\,,$$
where the colimit is taken over the intermediate finite Galois field extensions. The first maps are induced by the functor $\Psi: \MAM(l/k;R)^\oplus \to \NMix(k;R)^\oplus$ and the second ones by the base-change functors $-\otimes_k l: \NMix(k;R)^\oplus \to \NMix(l;R)^\oplus$.
\end{corollary}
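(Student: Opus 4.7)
\smallskip

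\noindent\emph{Proof sketch.} The plan is to realize each of the three Hopf dg algebras in the short exact sequence of Theorem \ref{thm:main2} as a filtered colimit indexed by the poset of intermediate finite Galois extensions $\overline{k}/l/k$, and then to match the two structural maps with the colimits of their finite-level counterparts.

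First, I would identify the three terms. For the left-hand term, combining Propositions \ref{prop:Hopf1} and \ref{prop:Hopf2} yields, as already recorded in \eqref{eq:key}, the identification
$$\cC^0(\mathrm{Gal}(\overline{k}/k),R) \simeq \colim_{\overline{k}/l/k} \cC(\mathrm{Gal}(l/k),R)\,.$$
For the right-hand term, Proposition \ref{prop:colim} gives
$$\cH(\NMix(\overline{k};R)^\oplus) \simeq \colim_{\overline{k}/l/k} \cH(\NMix(l;R)^\oplus)\,.$$
The middle term $\cH(\NMix(k;R)^\oplus)$ is constant in $l$ and hence equals its own (filtered) colimit.

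Next, I would verify the compatibility of the two structural maps with these colimit presentations. For the first map, the composite $\MAM(l/k;R)^\oplus \subset \MAM(k;R)^\oplus \stackrel{\Psi}{\to} \NMix(k;R)^\oplus$ induces, under the identification of Proposition \ref{prop:Hopf2}, a morphism $\cC(\mathrm{Gal}(l/k),R) \to \cH(\NMix(k;R)^\oplus)$. These morphisms are functorial in $l$ with respect to the inclusions $\MAM(l/k;R)^\oplus \subset \MAM(l'/k;R)^\oplus$, and by the functoriality of Ayoub's construction (see \cite[Prop.~1.48]{Ayoub1}) their colimit agrees with the first map of Theorem \ref{thm:main2}. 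For the second map, the natural factorization $-\otimes_k \overline{k} \simeq (-\otimes_l \overline{k}) \circ (-\otimes_k l)$ from \S\ref{sub:base-change} assembles the induced maps $\cH(\NMix(k;R)^\oplus) \to \cH(\NMix(l;R)^\oplus)$ into a compatible cocone under the transition maps $-\otimes_l l'$; its colimit, combined with Proposition \ref{prop:colim}, recovers the second map of Theorem \ref{thm:main2}.

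The main (and rather mild) obstacle is that the identifications above are made term by term, so one still needs to know that filtered colimits commute with the formation of the three-term sequence at the level of Hopf dg algebras. This reduces to the functoriality of Ayoub's weak Tannakian construction under symmetric monoidal functors satisfying its hypotheses (again \cite[Prop.~1.48]{Ayoub1}) together with the exactness of filtered colimits in $\cD(R)$, which allows us to commute the colimit past the kernel/cokernel constructions involved in the short exact sequence. This completes the plan.
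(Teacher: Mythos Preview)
Your proposal is correct and follows exactly the paper's approach: the paper's one-line proof invokes precisely Propositions \ref{prop:Hopf1} and \ref{prop:colim} together with the identification \eqref{eq:key}, which is what your first two paragraphs spell out in detail. Your third paragraph is unnecessary, however: the corollary only asserts that the three-term \emph{sequence} (objects and maps) is the colimit of the finite-level sequences, not that short exactness is preserved under the colimit---that is established separately via Proposition \ref{prop:last}, so no appeal to exactness of filtered colimits or to kernel/cokernel constructions is needed here.
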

\begin{proof}
It follows from the combination of Propositions \ref{prop:Hopf1} and \ref{prop:colim} with the above identifications \eqref{eq:key}.
\end{proof}
\begin{proposition}\label{prop:last}
Given an intermediate finite Galois field extension $\overline{k}/l/k$, we have the following short exact sequence of Hopf dg algebras
\begin{equation}\label{eq:seq-last}
1 \too \cC(\mathrm{Gal}(l/k),R) \too \cH(\NMix(k;R)^\oplus) \too \cH(\NMix(l;R)^\oplus) \too 1\,,\end{equation}
where the first map is induced by the functor $\Psi: \MAM(l/k;R)^\oplus \to \NMix(k;R)^\oplus$ and the second one by the base-change functor $-\otimes_k l: \NMix(k;R)^\oplus \to \NMix(l;R)^\oplus$.
\end{proposition}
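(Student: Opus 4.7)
The plan is to apply Ayoub's criterion for short exact sequences of Hopf dg algebras \cite[\S1]{Ayoub1} to the diagram of symmetric monoidal triangulated functors
\begin{equation*}
\MAM(l/k;R)^\oplus \stackrel{\Psi}{\too} \NMix(k;R)^\oplus \stackrel{-\otimes_k l}{\too} \NMix(l;R)^\oplus
\end{equation*}
equipped with the Hochschild fiber functors $HH^k_R$ and $HH^l_R$. That each of the three functors satisfies the conditions of Ayoub's weak Tannakian formalism has already been established in the proofs of Proposition \ref{prop:main}, Theorem \ref{thm:new} and Proposition \ref{prop:Hopf2} (the latter providing the identification $\cH(\MAM(l/k;R)^\oplus) \simeq \cC(\mathrm{Gal}(l/k),R)$). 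It remains to verify three conditions: (a) the induced composition of Hopf dg algebra maps is trivial; (b) the right-hand map is surjective; (c) exactness in the middle.

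The triviality (a) can be checked by computing the composed fiber functor explicitly. By Remark \ref{rk:last} we have $\Psi(M_R(\mathrm{Spec}(l'))) \simeq U_R(l')$ for any intermediate finite separable extension $l/l'/k$, and since $l/k$ is Galois containing $l'$ the splitting $l' \otimes_k l \simeq l^{[l':k]}$ yields
\begin{equation*}
(-\otimes_k l)(\Psi(M_R(\mathrm{Spec}(l')))) \simeq U_R(l' \otimes_k l) \simeq U_R(l)^{\oplus [l':k]}.
\end{equation*}
A direct base-change computation (using $HH^l(\cA\otimes_k l) \simeq HH^k(\cA)\otimes_k l$) gives $HH^l_R \circ (-\otimes_k l) \simeq HH^k_R$, so the composed fiber functor $HH^l_R \circ (-\otimes_k l) \circ \Psi$ agrees with the forgetful fiber functor on $\MAM(l/k;R)^\oplus \simeq \cD(\mathrm{coMod}(\cC(\mathrm{Gal}(l/k),R)))$ appearing in the proof of Proposition \ref{prop:Hopf2}. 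Since this fiber functor factors through the thick triangulated subcategory generated by $U_R(l)$, functoriality of Ayoub's construction forces the composition $\cC(\mathrm{Gal}(l/k),R) \to \cH(\NMix(k;R)^\oplus) \to \cH(\NMix(l;R)^\oplus)$ to factor through the augmentation $\cC(\mathrm{Gal}(l/k),R) \to R$ followed by the unit $R \to \cH(\NMix(l;R)^\oplus)$.

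For the surjectivity (b), I would appeal to the fact that the base-change functor $-\otimes_k l$ admits a right adjoint $\mathrm{Res}_{l/k}$ (restriction of scalars, preserving compact generators and satisfying the projection formula), together with the faithful flatness of the finite Galois extension $l/k$. Ayoub's criterion for surjectivity of induced Hopf algebra morphisms then yields that $\cH(\NMix(k;R)^\oplus) \to \cH(\NMix(l;R)^\oplus)$ is a surjection.

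The main obstacle will be (c), exactness in the middle: one must show that the Hopf kernel of the right-hand map coincides with the image of $\cC(\mathrm{Gal}(l/k),R)$. Equivalently, an object $N \in \NMix(k;R)^\oplus$ whose base-change $N\otimes_k l$ becomes isomorphic to a direct sum of copies of $U_R(l)$ should already belong to $\Psi(\MAM(l/k;R)^\oplus)$. The plan is to reduce this to a Galois-descent statement for noncommutative motives along $l/k$, exploiting \cite[Prop.~7.1]{Artin} together with the Galois hypothesis; one then invokes Ayoub's exactness criterion, which expresses the Hopf kernel as the Hopf algebra of the subcategory of objects on which the right-hand Hopf algebra coacts trivially. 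Making this descent argument precise—so as to identify that subcategory with $\Psi(\MAM(l/k;R)^\oplus)$—is the heart of the proof.
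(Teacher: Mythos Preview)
Your approach diverges substantially from the paper's, and the plan for step (c) has a genuine gap.

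The paper does not invoke abstract exactness criteria. Instead it computes the three Hopf dg algebras explicitly via the Galois decomposition. The key lemma (Lemma~\ref{lem:Galois}) is that for every $N\in\NMix(l;R)^\oplus$ one has $\mathrm{Res}_{l/k}(N)\otimes_k l\simeq\bigoplus_{\sigma\in\mathrm{Gal}(l/k)}{}^\sigma N$. Applying this to $N=\underline{HH}^l_R(R)$ and using $\underline{HH}^k_R\simeq\mathrm{Res}_{l/k}\circ\underline{HH}^l_R$ yields
\[
\cH(\NMix(k;R)^\oplus)\;\simeq\;\bigoplus_{\sigma\in\mathrm{Gal}(l/k)} HH^l_R\bigl({}^\sigma\underline{HH}^l_R(R)\bigr),
\]
and a parallel computation (Lemma~\ref{lem:cosets}) gives $\cC(\mathrm{Gal}(l/k),R)\simeq\bigoplus_{\sigma}HH^l_R({}^\sigma U_R(l))$. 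Under these identifications the sequence \eqref{eq:seq-last} becomes: direct sum over $\sigma$ of the unit maps, followed by projection onto the factor $\sigma=1$. Injectivity, surjectivity, the counit--unit factorization of the composite, and the cokernel identification $\cH(\NMix(l;R)^\oplus)\simeq\cH(\NMix(k;R)^\oplus)\otimes_{\cC(\mathrm{Gal}(l/k),R)}R$ are then all read off directly.

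Your proposed route for (c) is where the argument breaks down. You reformulate exactness in the middle as a categorical descent statement: an object $N\in\NMix(k;R)^\oplus$ with $N\otimes_k l$ a sum of copies of $U_R(l)$ should lie in $\Psi(\MAM(l/k;R)^\oplus)$. First, it is not clear that Ayoub's formalism provides a criterion of this shape; the Hopf kernel is defined at the level of the coalgebra, not by singling out a full subcategory of ``trivially coacting'' objects. Second, even granting such a criterion, the descent claim itself is far from obvious in this setting: Proposition~\ref{prop:comparison} shows that $\NMAM(k;R)$ has many nontrivial higher Ext groups (coming from higher algebraic $K$-theory), so arguing that a derived twisted form of $U_R(k)^{\oplus n}$ must be a mixed Artin motive would require a genuinely new argument that you have not supplied. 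The paper's explicit $\mathrm{Gal}(l/k)$-indexed decomposition sidesteps this entirely.
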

\begin{proof}
We need to show that the first map is injective, that the second one is surjective, that the composition is given by the counit followed by the unit, and~that 
$$\cH(\NMix(l;R)^\oplus)\simeq \cH(\NMix(k;R)^\oplus) \otimes_{\cC(\mathrm{Gal}(l/k),R)}R\,.$$ 
Given an element $\sigma\in \mathrm{Gal}(l/k)$, consider the following base-change equivalence
\begin{eqnarray}\label{eq:twist}
{}^\sigma(-):\dgcat(l) \stackrel{\simeq}{\too} \dgcat(l) && \cA \mapsto \cA\otimes_{k, \sigma} l\,.
\end{eqnarray}
As explained in \S\ref{sub:base-change}, \eqref{eq:twist} gives rise to a triangulated symmetric monoidal equivalence (which we still denote by ${}^\sigma(-)$) making the following diagram commute:
\begin{equation}\label{eq:diagram-last}
\xymatrix{
\dgcat(l) \ar[d]_-{U_R} \ar[rr]^-{{}^\sigma(-)} && \dgcat(l) \ar[d]^-{U_R} \\
\NMix(l;R)^\oplus \ar[rr]_-{{}^\sigma(-)} && \NMix(l;R)^\oplus \,.
}
\end{equation}
Under these notations, we have the following isomorphisms:
\begin{eqnarray}
(HH^k_R \circ \underline{HH}^k_R)(R) & = & HH^k_R(\underline{HH}^k_R(R)) \nonumber \\
& \simeq & HH^l_R(\mathrm{Res}_{l/k}(\underline{HH}^l_R(R))\otimes_k l) \label{eq:isomorphism-1} \\
& \simeq & HH^l_R(\oplus_{\sigma \in \mathrm{Gal}(l/k)}{}^\sigma \underline{HH}^l_R(R)) \label{eq:isomorphism-2} \\
& \simeq & \oplus_{\sigma \in \mathrm{Gal}(l/k)} HH^l_R({}^\sigma \underline{HH}^l_R(R))\,, \label{eq:isomorphism-2.5}
\end{eqnarray}
where \eqref{eq:isomorphism-1} follows from the natural isomorphisms $\mathrm{Res}_{l/k} \circ \underline{HH}_R^l \simeq \underline{HH}_R^k$ and $HH^l_R \circ (-\otimes_kl) \simeq HH_R^k$,  \eqref{eq:isomorphism-2} from Lemma \ref{lem:Galois} below (with $N:=\underline{HH}^l_R(R)$), and \eqref{eq:isomorphism-2.5} from the fact that $HH^l_R$ preserves direct sums. Now, following \S\ref{sec:proof-new}, let us write $HH'^{l/k}_R$ for the composed functor \eqref{eq:composition} and $\underline{HH}'^{l/k}_R$ for its right adjoint. Under these notations, we have the following isomorphisms:
\begin{eqnarray}
(HH'^{l/k}_R\circ \underline{HH}'^{l/k}_R) & = & HH'^{l/k}_R(\underline{HH}'^{l/k}_R(R)) \nonumber \\
& \simeq & HH'^{l/k}_R(M_R(\mathrm{Spec}(l))) \label{eq:isomorphism-3} \\
&\simeq & HH^l_R(U_R(l)\otimes_k l) \label{eq:isomorphism-3.5} \\
& \simeq & HH^l_R(\oplus_{\sigma \in \mathrm{Gal}(l/k)}{}^\sigma U_R(l)) \label{eq:isomorphism-4} \\
& \simeq & \oplus_{\sigma \in \mathrm{Gal}(l/k)}HH^l_R({}^\sigma U_R(l)) \label{eq:isomorphism-5}\,.
\end{eqnarray}
Some explanations are in order: \eqref{eq:isomorphism-3} follows from Lemma \ref{lem:cosets} below; \eqref{eq:isomorphism-3.5} from Remark \ref{rk:last} and isomorphism $U_R(\perf_\dg(\mathrm{Spec}(l))) \simeq U_R(l)$; \eqref{eq:isomorphism-4} from the combination of the commutative diagrams \eqref{eq:diagram-base-change} and \eqref{eq:diagram-last} with the canonical isomorphism $l\otimes_k l \simeq \prod_{\sigma \in \mathrm{Gal}(l/k)}{}^\sigma l$; and \eqref{eq:isomorphism-5} from the fact that $HH^l_R$ preserves direct sums. Under the above isomorphisms, the above sequence of Hopf dg algebras \eqref{eq:seq-last} identifies with
$$ \oplus_{\sigma \in \mathrm{Gal}(l/k)}HH^l_R({}^\sigma U_R(l)) \to \oplus_{\sigma\in \mathrm{Gal}(l/k)} HH^l_R({}^\sigma \underline{HH}^l_R(R))\to (HH^l_R \circ \underline{HH}^l_R)(R)\,,$$
where the first map is the direct sum (indexed by $\sigma \in \mathrm{Gal}(l/k)$) of the units of the Hopf dg algebras $HH^l_R({}^\sigma \underline{HH}_R^l(R))$ and the second map is the projection onto the factor $1 \in \mathrm{Gal}(l/k)$. The proof follows now automatically from this description.
\end{proof}
\begin{lemma}\label{lem:Galois}
For every object $N \in \NMix(l;R)^\oplus$, we have a canonical isomorphism $\mathrm{Res}_{l/k}(N) \otimes_k l\simeq \oplus_{\sigma \in \mathrm{Gal}(l/k)}{}^\sigma N$.
\end{lemma}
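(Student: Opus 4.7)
The plan is to reduce to compact generators and then perform a direct computation using the Galois decomposition $l\otimes_k l \simeq \prod_{\sigma\in\mathrm{Gal}(l/k)}{}^\sigma l$ at the level of dg categories.

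First, I will verify that both functors $N\mapsto\mathrm{Res}_{l/k}(N)\otimes_k l$ and $N\mapsto \bigoplus_{\sigma\in\mathrm{Gal}(l/k)} {}^\sigma N$ preserve arbitrary direct sums. The base-change $-\otimes_k l$ preserves direct sums by construction; its right adjoint $\mathrm{Res}_{l/k}$ does as well because $-\otimes_k l$ sends compact generators to compact objects and $\NMix(l;R)^\oplus$ is compactly generated (\cite[Lem.~2.1.28]{Ayoub-thesis-1}, as already invoked in the proof of Lemma \ref{lem:colimit-last}). Each twist ${}^\sigma(-)$ is a triangulated equivalence and trivially preserves direct sums. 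Since $\{U_R(\cA)\mid \cA\in\dgcat_{\mathrm{sp}}(l)\}$ is a set of compact generators of $\NMix(l;R)^\oplus$ (Remark \ref{rk:symmetric}) and the prospective isomorphism is natural in $N$, it suffices to construct it for $N=U_R(\cA)$ with $\cA\in\dgcat_{\mathrm{sp}}(l)$.

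For such $\cA$, the definition of $\mathrm{Res}_{l/k}$ as the right adjoint to the symmetric monoidal base-change $-\otimes_k l$, together with the universal property \eqref{eq:equivalence-univ}, identifies $\mathrm{Res}_{l/k}(U_R(\cA))\otimes_k l$ with $U_R(\cA\otimes_k l)$, where on the right $\cA$ is first regarded as a dg category over $k$ and then tensored back with $l$ over $k$. Since $l/k$ is finite Galois, one has the canonical decomposition $l\otimes_k l\simeq \prod_{\sigma\in\mathrm{Gal}(l/k)}{}^\sigma l$ in $\kalg$; tensoring with $\cA$ over the first copy of $l$ yields a Morita equivalence
$$\cA\otimes_k l\;\simeq\;\cA\otimes_l(l\otimes_k l)\;\simeq\;\coprod_{\sigma\in\mathrm{Gal}(l/k)}{}^\sigma\cA$$
in $\dgcat(l)$, with the twists ${}^\sigma\cA$ as in \eqref{eq:twist}. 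Applying the symmetric monoidal additive invariant $U_R$, which converts split finite coproducts of dg categories into direct sums in $\NMix(l;R)^\oplus$, and invoking the commutative square \eqref{eq:diagram-last} to intertwine ${}^\sigma(-)$ on $\dgcat(l)$ with ${}^\sigma(-)$ on $\NMix(l;R)^\oplus$, produces the desired isomorphism $\mathrm{Res}_{l/k}(U_R(\cA))\otimes_k l\simeq\bigoplus_{\sigma\in\mathrm{Gal}(l/k)}{}^\sigma U_R(\cA)$.

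The main obstacle is justifying rigorously the identification $\mathrm{Res}_{l/k}(U_R(\cA))\otimes_k l\simeq U_R(\cA\otimes_k l)$ at the level of the motivic categories (as opposed to merely at the dg-categorical level), and confirming that the Galois decomposition of $\cA\otimes_k l$ in $\dgcat(l)$ indexes the summands by $\mathrm{Gal}(l/k)$ precisely through the twists appearing in \eqref{eq:diagram-last}; once this bookkeeping is settled, naturality in $N$ (which is clear on the compact generators $U_R(\cA)$ because the construction is functorial in $\cA$) extends the isomorphism to all of $\NMix(l;R)^\oplus$ by the direct-sum-preservation already established.
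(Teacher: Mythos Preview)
Your proposal is correct and follows essentially the same route as the paper: reduce to the compact generators $N=U_R(\cA)$ and then apply the Galois decomposition $l\otimes_k l\simeq\prod_{\sigma}{}^\sigma l$ at the dg level. The paper resolves precisely the step you flag as the main obstacle by constructing the restriction $(-)_k$ at the motivic level via the universal property \eqref{eq:equivalence-univ0} (using that $(-)_k:\dgcat(l)\to\dgcat(k)$ preserves Morita equivalences, filtered colimits, split exact sequences, and---since $l/k$ is finite separable---smooth proper dg categories), and then identifying $\mathrm{Res}_{l/k}\simeq(-)_k$ by adjunction, so that $\mathrm{Res}_{l/k}(U_R(\cA))\otimes_k l\simeq U_R(\cA_k\otimes_k l)\simeq U_R(\cA\otimes_l(l\otimes_k l))$.
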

\begin{proof}
Note first that the functor $-\otimes_kl$ admits a right adjoint
\begin{eqnarray}\label{eq:restriction}
(-)_k: \dgcat(l) \too \dgcat(k) && \cA \mapsto \cA_k\,.
\end{eqnarray}
Clearly, the functor \eqref{eq:restriction} preserves Morita equivalences, filtered colimits, and short exact sequences of dg categories. Since the field extension $l/k$ is in particular finite and separable, the functor \eqref{eq:restriction} preserves moreover the smooth proper dg categories; see \cite[Prop.~7.5]{Artin}. Making use of the equivalence \eqref{eq:equivalence-univ0}, we hence obtain a triangulated functor (which we still denote by $(-)_k$) making the left-hand side diagram commute and fitting into the right-hand side adjunction:
\begin{equation}\label{eq:diagrams}
\xymatrix{
\dgcat_{\mathrm{sp}}(l) \ar[d]_-{U_R} \ar[r]^-{(-)_k} & \dgcat_{\mathrm{sp}}(k) \ar[d]^-{U_R} &  & \NMix(l;R)^\oplus \ar@<1ex>[d]^-{(-)_k} \\
\NMix(l;R)^\oplus \ar[r]_-{(-)_k} & \NMix(k;R)^\oplus && \NMix(k;R)^\oplus \ar@<1ex>[u]^-{-\otimes_kl}\,.}
\end{equation}
In particular, $\mathrm{Res}_{l/k}\simeq (-)_k$. Since $(-)_k$ preserves arbitrary direct sums, it suffices then to show that $\mathrm{Res}_{l/k}(U_R(\cA)) \otimes_k l$ is isomorphic to $\oplus_{\sigma \in \mathrm{Gal}(l/k)}{}^\sigma U_R(\cA)$ for every $\cA \in \dgcat_{\mathrm{sp}}(l)$. Consider the following isomorphisms:
\begin{eqnarray}
\mathrm{Res}_{l/k}(U_R(\cA))\otimes_k l & \simeq & U_R(\cA\otimes_k l)_k\otimes_k l \nonumber \\
& \simeq & U_R((\cA\otimes_l l)_k)\otimes_k l \label{eq:isomorphism-11} \\
& \simeq & U_R(\cA\otimes_l (l \otimes_k l)) \label{eq:isomorphism-22} \\
& \simeq & \oplus_{\sigma \in \mathrm{Gal}(l/k)}{}^\sigma U_R(\cA)\,, \label{eq:isomorphism-33}
\end{eqnarray}
where \eqref{eq:isomorphism-11} follows from the commutativity of the left-hand side of \eqref{eq:diagrams}, \eqref{eq:isomorphism-22} from the commutative diagram \eqref{eq:diagram-base-change}, and \eqref{eq:isomorphism-33} from the combination of the isomorphism $l \otimes_k l \simeq \prod_{\sigma \in \mathrm{Gal}(l/k)}{}^\sigma l$ with the commutative diagram \eqref{eq:diagram-last}. This achieves the proof.
\end{proof}
\begin{lemma}\label{lem:cosets}
We have an isomorphism $\underline{HH}'^{l/k}_R(R)\simeq M_R(\mathrm{Spec}(l))$.
\end{lemma}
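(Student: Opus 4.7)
The plan is to combine Proposition \ref{prop:Hopf2} with the classical description of the right adjoint of a forgetful functor from comodules to modules. By the proof of Proposition \ref{prop:Hopf2}, under the equivalence $\MAM(l/k;R)^\oplus \simeq \cD(\mathrm{coMod}(\cC(\mathrm{Gal}(l/k),R)))$, the composed functor $HH'^{l/k}_R$ of \eqref{eq:composition} corresponds to the forgetful functor \eqref{eq:forget}. Hence it suffices to compute the right adjoint of the forgetful functor at $R \in \cD(R)$ and then transport the answer back to $\MAM(l/k;R)^\oplus$.

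For any Hopf $R$-algebra $H$, the forgetful functor $\mathrm{coMod}(H) \to \mathrm{Mod}(R)$ admits as right adjoint the cofree comodule functor $M \mapsto M \otimes_R H$, where the comodule structure on $M \otimes_R H$ is induced by $\mathrm{id}_M \otimes \Delta$. Both functors preserve arbitrary direct sums and the categories involved are compactly generated, so this adjunction passes to the derived level. Evaluating the derived right adjoint at $M = R$ produces $H$ itself, viewed as a comodule over itself through $\Delta$; in our setting this is the regular representation of $\mathrm{Gal}(l/k)$ on $\cC(\mathrm{Gal}(l/k),R)$.

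It then remains to identify this regular representation with the image of $M_R(\mathrm{Spec}(l))$ under the equivalence $\MAM(l/k;R)^\oplus \simeq \cD(\mathrm{coMod}(\cC(\mathrm{Gal}(l/k),R)))$. Since $l/k$ is Galois, the set of $\overline{k}$-points of $\mathrm{Spec}(l)$ is a torsor under $\mathrm{Gal}(l/k)$. Under the equivalence $\AM(l/k)_R \simeq \mathrm{rep}_R(\mathrm{Gal}(l/k))$ recalled at the start of \S\ref{sec:proof-new}, the motive $M_R(\mathrm{Spec}(l))$ corresponds to the $R$-linearization of this torsor, which is precisely the regular representation; viewed as a comodule over $\cC(\mathrm{Gal}(l/k),R)$, this is the cofree comodule on $R$. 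By uniqueness of right adjoints we conclude $\underline{HH}'^{l/k}_R(R) \simeq M_R(\mathrm{Spec}(l))$.

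The only non-formal step — and hence the main obstacle — is pinning down this last identification, i.e.\ tracking the equivalence $\MAM(l/k;R)^\oplus \simeq \cD(\mathrm{coMod}(\cC(\mathrm{Gal}(l/k),R)))$ carefully enough to see that $M_R(\mathrm{Spec}(l))$ goes to the cofree comodule on $R$. Everything else is formal adjunction theory applied in the derived setting.
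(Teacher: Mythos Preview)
Your argument is correct and takes a genuinely different route from the paper. The paper proves the lemma by a direct Yoneda computation: for each generator $M_R(\mathrm{Spec}(l'))$ of $\MAM(l/k;R)^\oplus$ it computes both $\Hom(M_R(\mathrm{Spec}(l')),M_R(\mathrm{Spec}(l)))$ and $\Hom(HH'^{l/k}_R(M_R(\mathrm{Spec}(l'))),R)$ and checks they agree, the former via a Galois-theoretic double-coset decomposition of $l'\otimes_k l$ and the latter via the Hochschild homology computation $HH^k(l')\simeq k^{[l':k]}$. Your approach instead transports the whole question through the equivalence $\MAM(l/k;R)^\oplus\simeq\cD(\mathrm{coMod}(\cC(\mathrm{Gal}(l/k),R)))$ established in the proof of Proposition~\ref{prop:Hopf2}, so that computing $\underline{HH}'^{l/k}_R(R)$ becomes the standard fact that the right adjoint to the forgetful functor from comodules is the cofree comodule, together with the identification of $M_R(\mathrm{Spec}(l))$ with the regular representation. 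Your argument is cleaner and avoids the explicit double-coset calculation; on the other hand it invokes the full identification of $HH'^{l/k}_R$ with the forgetful functor, which in Proposition~\ref{prop:Hopf2} passes through the de~Rham--Betti comparison and hence uses the ambient hypotheses $k\subset\bbC$ and $R$ a $\bbC$-algebra, whereas the paper's computation only needs the degree-zero Hochschild calculation. Since Lemma~\ref{lem:cosets} is only applied under those hypotheses anyway, this is not a defect.
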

\begin{proof}
In order to simplify the exposition we will omit the underscripts of $\Hom$. It will be clear from the context which category we are considering. Let $l/l'/k$ be an intermediate separable field extension and $n \in \bbZ$. Since $HH'^{l/k}_R$ preserves arbitrary direct sums, it suffices to show that the $R$-modules
\begin{eqnarray}\label{eq:two-sided}
& & \Hom(M_R(\mathrm{Spec}(l')),M_R(\mathrm{Spec}(l))) \quad \Hom(HH'^{l/k}_R(M_R(\mathrm{Spec}(l'))),R)
\end{eqnarray}
are isomorphic. As explained in the proof of Proposition \ref{prop:Hopf2}, the right-hand side identifies with $\Hom_{\cD(R)}(R^{[l':k]},R)\simeq R^{[l':k]}$. In what concerns the left-hand side, it identifies with $CH^0(\mathrm{Spec}(l') \times \mathrm{Spec}(l))_R \simeq K_0(l' \otimes_k l)_R$. Let us denote by $H'$ (resp. $H$) the closed subgroup of $G:=\mathrm{Gal}(\overline{k}/k)$ such that $l'=\overline{k}^{H'}$ (resp. $l=\overline{k}^H$). Since the subgroup $H$ is normal and $H \subset H'$, Galois theory gives rise to the isomorphisms
$$ l' \otimes_k l \simeq \overline{k}^{H'} \otimes_k \overline{k}^H \simeq \prod_{\overline{\sigma} \in H'\backslash G/H} \overline{k}^{H' \cap (\sigma^{-1} H\sigma)} = \prod_{\overline{\sigma} \in H'\backslash G/H} \overline{k}^{H} \simeq \prod_{\overline{\sigma} \in H'\backslash G} l \simeq  l^{[l':k]}\,,$$
where $H'\backslash G/H$ stands for the set of double cosets. Consequently, we conclude that the left-hand side of \eqref{eq:two-sided} identifies with $K_0(l^{[l':k]})_R \simeq \bbZ^{[l':k]}\otimes_\bbZ R \simeq R^{[l':k]}$. This concludes the proof. 
\end{proof}
We now have all the ingredients necessary to prove Theorem \ref{thm:main2}. When the field extension $\overline{k}/k$ is finite, the proof follows from Proposition \ref{prop:last}. Otherwise, it follows from combination of Proposition \ref{prop:last} with Corollary \ref{cor:last}. 
\begin{remark}
The above proofs of Proposition \ref{prop:main} and Theorems \ref{thm:main1}-\ref{thm:main2} hold {\em mutatis mutandis} with $\NMix(k;R)$ replaced by its localizing variant $\NMix_L(k;R)$ and $\bbA^1$-homotopy variant $\NMix_{\bbA^1}(k;R)$; see Remark \ref{rk:variants}.
\end{remark}

\end{document}